\documentclass[a4paper,dvipsnames,fleqn]{article}

\usepackage{amsmath}
\usepackage{amssymb}
\usepackage{amsthm}
\usepackage[utf8]{inputenc}
\allowdisplaybreaks
\usepackage{bm}
\usepackage{enumerate}

\usepackage[british]{babel}
\usepackage{csquotes}
\usepackage[UKenglish]{isodate}
\usepackage{dsfont}
\usepackage{enumitem}
\usepackage[OT2,OT1]{fontenc}
\usepackage[cal=cm,scr=boondoxo]{mathalfa}
\usepackage{pifont}
\usepackage{stmaryrd}
\usepackage{textcomp}
\usepackage{tikz}
	\usetikzlibrary{arrows}
	\usetikzlibrary{patterns}
\usepackage{tikz-cd}
\usepackage[textwidth=3cm,colorinlistoftodos]{todonotes}
\usepackage{xfrac}
\usepackage[colorlinks=true,allcolors=purple]{hyperref}
\usepackage{cleveref}
	\crefformat{equation}{#2(#1)#3}

\numberwithin{equation}{section}			

\newcommand\cyr{
\renewcommand\rmdefault{wncyr}%
\renewcommand\sfdefault{wncyss}%
\renewcommand\encodingdefault{OT2}%
\normalfont
\selectfont}
\DeclareTextFontCommand{\textcyr}{\cyr}

\newcounter{Enum}				
\newenvironment{Enumerate}{\begin{enumerate}[label={\rm({\roman*})}]}{\end{enumerate}}

\newcommand{\descriptionlabelsave}{}		

\newcounter{StepsCount}				

\newcounter{StepsRefCount}

\theoremstyle{plain}
	\newtheorem{lemma}{Lemma}[section]
	\newtheorem{proposition}[lemma]{Proposition}
	\newtheorem{theorem}[lemma]{Theorem}
	\newtheorem{corollary}[lemma]{Corollary}
		\newtheorem{conjecture}[lemma]{Conjecture}
	\newcommand{\GenericTheoremName}{}
\theoremstyle{definition}
	\newtheorem{definition}[lemma]{Definition}
	\newcommand{\GenericDefinitionName}{}
\theoremstyle{remark}
	\newtheorem{remark}[lemma]{Remark}
	\newtheorem{example}[lemma]{Example}
	\newcommand{\GenericRemarkName}{}

\AddToHook{env/proposition/begin}{\crefalias{lemma}{proposition}}
\AddToHook{env/theorem/begin}{\crefalias{lemma}{theorem}}
\AddToHook{env/corollary/begin}{\crefalias{lemma}{corollary}}
\AddToHook{env/conjecture/begin}{\crefalias{lemma}{conjecture}}
\AddToHook{env/definition/begin}{\crefalias{lemma}{definition}}
\AddToHook{env/remark/begin}{\crefalias{lemma}{remark}}
\AddToHook{env/example/begin}{\crefalias{lemma}{example}}

\newcommand{\mc}[1]{{\mathcal{#1}}}			
\newcommand{\mf}[1]{{\mathfrak{#1}}}			
\newcommand{\bb}[1]{{\mathbb{#1}}}			
\newcommand{\wt}{\widetilde}				
\newcommand{\wh}[1]{\widehat{#1}}			

\DeclareMathOperator{\RE}{Re}				

\DeclareMathOperator{\IM}{Im}				

\DeclareMathOperator{\dom}{dom}
\DeclareMathOperator{\mul}{mul}

\DeclareMathOperator{\ran}{ran}

\DeclareMathOperator*{\wlim}{\mathit{w}-lim}
\DeclareMathOperator*{\slim}{\mathit{s}-lim}

\newcommand{\Side}[1]{\hfill{#1}\kern10pt}		
\newcommand{\FD}[5]{
	\DF\left\{\begin{array}{rcl}{#1}&\to &{#2}\\[#3pt] {#4}&\mapsto &{#5}\end{array}\right.}

\newcommand{\Dummy}{\text{\textvisiblespace\kern1pt}}	

\DeclareMathOperator{\Span}{span}			

\newcommand{\DS}{\mid\mkern3mu}				
\newcommand{\DP}{{.\kern5pt}}				
\newcommand{\DF}{\colon}				
\newcommand{\DE}{\mathrel{\mathop:}=}			
\newcommand{\ED}{=\mathrel{\mathop:}}			
\newcommand{\DD}{\mkern4mu\mathrm{d}}			

\usepackage{xcolor}
\usepackage{graphicx}
\usepackage[normalem]{ulem}

\colorlet{colorJR}{red}
\colorlet{colorAL}{blue!50}

\usepackage{todonotes} 
\colorlet{colorAL}{blue!50}
\colorlet{colorJR}{red!50}

\begin{document}

%


\begin{flushleft}
	{\Large\bf On eigenvalues of self-adjoint extensions for defect larger than one}
	\\[5mm]
	\textsc{
	Annemarie Luger, Jakob Reiffenstein
	\hspace*{-14pt}
		\renewcommand{\thefootnote}{\fnsymbol{footnote}}
		\setcounter{footnote}{2}
			\footnote{
	The second author was supported by the Sverker Lerheden foundation.}
	} \\[4ex]
    \begin{center}
    \textit{To Heinz Langer, in thankful memory}
    \end{center}
	{\small
	\textbf{Abstract.}
Self-adjoint extensions of a symmetric operator are parametrised by Krein's formula, in which the $Q$-function interacts with another analytic function (the parameter). We obtain a characterisation of the eigenvalues, isolated or not, of a given self-adjoint extension in terms of these two functions. The setting is highly general, covering symmetric operators with arbitrary defect in a Hilbert or Pontryagin space. Of independent interest is our newly developed tool, the \emph{generalised value} of a generalised Nevanlinna function, for which we give both a function-theoretic and an operator-theoretic description.
	\\[3mm]
	\textbf{AMS MSC 2020:} Primary 47B25; Secondary 30E20, 30E99, 46C20, 47A11, 47B50,  	47A56
	\\
	\textbf{Keywords:} Extension theory of symmetric operators, Krein's formula, eigenvalues, matrix- or operator-valued generalised Nevanlinna functions, generalised zeroes and poles
	}
\end{flushleft}

%



%
%
%
\section{Introduction}

Finding eigenvalues of an operator often amounts to determining zeroes or singularities of analytic functions. This article provides the theoretic background for this principle for a rather large class of problems.

In many cases a self-adjoint operator appears as an extension of a symmetric operator. Typical examples for such situations are provided, for instance, by Sturm-Liouville operators or other differential operators, on intervals or more generally on graphs, see e.g., the monographs \cite{gesztesy.nichols.zinchenko:2024, kurasov:2024}.

In this situation the  (compressed)  resolvent  of a self-adjoint extension  is given by  Krein's formula, which is a kind of perturbation formula and reads as
\begin{align}
\label{eq:Krein}
P_{\mathcal H}(\wt A-\lambda)^{-1}|_{\mathcal H}=(A-\lambda)^{-1}-\gamma (\lambda)(m(\lambda)+\tau(\lambda))^{-1}\gamma(\overline{\lambda})^+. 
\end{align}
Here $A$ and $\wt A$ are both self-adjoint extensions in Hilbert spaces $\mathcal H$ and $\wt{\mathcal H} \supseteq \mc H$, respectively, of an underlying closed symmetric operator $S$ in $\mc H$ with equal deficiency indices $(n,n)$.  In this context we are  interested in the spectral properties of $\wt A$, whereas we see $A$ as the unperturbed operator, a kind of reference operator. In the examples with differential operators, $A$ might be a Dirichlet operator, whereas $\wt A$ corresponds to other self-adjoint boundary conditions. Here  eigenvalue-dependent boundary conditions  not only are allowed, but  actually are of particular interest  in the current text. 

In \eqref{eq:Krein} on the right-hand side  $\gamma$ is a  defect family and $m$  the corresponding (matrix- or operator-valued) $Q$-function of $(S,A,\gamma)$, for the precise definitions see \Cref{OS22}. At this point it is important to note that both $\gamma$ and $m$ depend on $A$ only and are analytic in the resolvent set $\varrho(A)$. 
The remaining object  in Krein's formula, the function $\tau$, is  meromorphic on $\bb C \setminus \bb R$ and  can be seen as a parameter. 

Under certain minimality conditions there is a one-to-one correspondence between the parameters $\tau$ and the extensions $\wt A$. 
Let us review two different situations. 
\begin{enumerate}
    \item {\bf Canonical extensions.}
If $\wt{\mathcal H}=\mathcal H$, i.e., the extension $\wt A$ acts in the same space as $A$, then the left-hand side  of \eqref{eq:Krein} reduces to simply the resolvent $(\wt A-\lambda)^{-1}.$  On the right-hand side this is reflected in the fact that $\tau$ is a self-adjoint constant. More precisely, in the case of defect $(1,1)$ there is a one-to-one correspondence between all self-adjoint extensions $\wt A$ in $\mathcal H$ and all $\tau\in\mathbb R\cup\{\infty\}$, here $\tau=\infty$ corresponds to the  reference extension $A$. Let us now fix some parameter $\tau$. If we  consider a point $\lambda_0$ which belongs to $\varrho(A)$, then it  belongs to the spectrum of  $\wt A$  precisely if it is a zero of $m(\lambda)+\tau$. But actually much more than that holds, namely, the spectrum of $\wt A$ coincides with the set of singularities of $-(m(\lambda)+\tau)^{-1}$. This holds also for higher defect and  is due to  the fact that the relation $\wt A$ is a minimal representing relation for the function $-(m(\lambda)+\tau)^{-1}$, cf., \Cref{O1}.

\begin{example}\label{ex:first}
As a classical example one can think of  a Sturm-Liouville differential expression
 on $[0,\infty)$  which is regular  at $0$ and in limit-point case at $\infty$.   In this case $A$ might be the corresponding self-adjoint operator whose domain consists of  functions satisfying the Dirichlet boundary condition at the left endpoint. 
The Titchmarsh-Weyl $m$-function corresponding to this boundary condition then plays the role of the $Q$-function. If $\wt A$ is any other self-adjoint realisation, defined by the boundary condition
 \[
    \cos (\theta) f(0)+\sin (\theta) f'(0)=0
 \]
 with $\theta \in (0,\pi)$, the parameter $\tau$ is constant equal to $\cot \theta$. The eigenvalues of $\wt A$ are thus precisely the generalised zeroes of $m(\lambda)+\cot \theta$.

As examples for higher (finite) defect serve differential operators on domains with more (regular) endpoints, e.g., a finite interval or a graph for defect larger than 2. 
\end{example}
\item {\bf Noncanonical extensions.} If, however, the extension $\wt A$ acts in a larger space $\wt H\supset \mathcal H$  and this space is chosen minimal (see Subsection \ref{OS22} for  details) then the situation is different. Now $\tau$ is  nonconstant and, even  though the function $-(m(\lambda)+\tau(\lambda))^{-1}$ still admits a representation involving the relation $\wt A$, this representation is not minimal in general. Hence there might be eigenvalues that cannot be seen by this $n\times n$-function. Instead one can consider the  $(2n)\times(2n)$-function
\begin{equation}\label{eq:M}
  -\begin{pmatrix}
      m(\lambda) & -I_n \\ -I_n & -\tau(\lambda)^{-1}
  \end{pmatrix}  ^{-1},
\end{equation}
which can be represented minimally with
 $\wt A$, and hence its singularities coincide with the spectrum of $\wt A$, cf., \Cref{O57}.

\begin{example}
 For a Sturm-Liouville operator as in \Cref{ex:first}  noncanonical  extensions can be seen as linearisations related to  eigenvalue-dependent boundary conditions, see  e.g.,  \cite{behrndt.jonas:2006, binding.browne.seddighi:1994, dijksma.langer.snoo:1987, dijksma.langer:1996}.   
\end{example}

\end{enumerate}
We are interested in characterizing the eigenvalues of a relation $\wt A$ given by Krein's formula \cref{eq:Krein} in terms of the $Q$-function and the parameter $\tau$, both $n\times n$-matrix functions. 
What was previously known about this question?
\begin{itemize}
    \item 
As mentioned above, for canonical extensions the parameter $\tau$ is a self-adjoint constant and the function $-(m(\lambda)+\tau)^{-1}$ has a minimal representation with $\wt A$.  Hence the eigenvalues of $\wt A$ are precisely the generalised zeroes of $m+\tau$ (a precise definition will be given in Section \ref{S3}).
\item It is also quite obvious that a point $\alpha \in\varrho(A)$ is an eigenvalue of $\wt A$ precisely if it is a  (generalised) pole  of $-(m(\lambda)+\tau(\lambda))^{-1}$, that is, a (generalised) zero of $m+\tau$. Note that in this case, by assumption, singularities on the right-hand side of \cref{eq:Krein} can only occur in  this term.
\item 
For defect $(1,1)$   noncanonical extensions are discussed in full generality in \cite{behrndt.luger:2007}. It was shown that  the eigenvalues of $\wt A$ are either generalised zeroes of $m+\tau$ or generalised poles of both $m$ and $\tau$.  
The proof required working with the $2\times2$-matrix function and to characterise its singularities, which was done using pole-cancellation functions. It was essential for the arguments used that $m$ and $\tau$ were still scalar functions. Also an example was given, which showed that the result, stated as it was, would not hold for higher defect. 
\end{itemize}

\noindent In the current paper we have now completed the picture and provide with Theorem \ref{O55} an analytic characterisation of the eigenvalues of extensions $\wt A$ even for higher defect. The main challenge is that, as $m$ and $\tau$ are now matrix functions, we have to keep track of directions (e.g. for singularities) to a completely different exten{t} than in \cite{behrndt.luger:2007}. 

To this end we have developed  the new concept of directional pairs and generalised values, which unifies and extends the notions of pole-cancellation function and generalised pole as well as root function and generalised zero.\footnote{For generalised Nevanlinna functions these have been introduced and investigated by {\bf Heinz Langer}. We  believe that he would appreciate the current development, which also clarifies old questions. }

Roughly speaking, we are dealing with functions  that admit representations including terms of the form 
\begin{align}\label{eq:intro}  
\gamma^+ (A-\lambda)^{-1} \gamma,
\end{align} cf., \Cref{prop:representation}.  In particular, this applies to the functions $m$ and $\tau$ above. If a point $\lambda$ belongs to the resolvent set of the self-adjoint relation $A$ then the function in \eqref{eq:intro} is well-defined and analytic at $\lambda$. However, we are mainly interested in real points, which might belong to the spectrum of $A$,  leaving the function a priori undefined.  There are now two ways of approaching this issue: First, one can consider  the limit behaviour of the analytic function. In order to capture the dependence on directions,  directional functions are used, which are introduced in \Cref{O31}. Second, one can use the representation and replace $\lambda$ by $\omega$ directly in 
\eqref{eq:intro}. This way one  obtains a linear relation (see \Cref{O15}), which is  possibly trivial. In  \Cref{S3} these two concepts are developed and the central result is \Cref{O33}, which shows that they lead to the same object. This provides us with  a very powerful tool, as it gives the flexibility to employ either of these two approaches, whichever is more appropriate in a given situation.

In this introduction we have motivated the questions in the case of an underlying Hilbert space, however, all our results are formulated and proven for the indefinite situation of a Pontryagin space as well. Wherever the  Hilbert space situation leads to a simplification,  this  is discussed, in particular, in Sections~\ref{sec-Nevanlinna} and \ref{sec:def}.

The article is organised as follows. We begin in \Cref{sec:prelim} by clarifying the setting.  \Cref{S3} is devoted to the new notion of  {\it generalised value}  of a generalised Nevanlinna function $Q$ and its meaning in terms of operator representations of $Q$, cf. \Cref{O33}. Moreover, we give a more explicit form of the generalised value in the definite case, i.e., for Herglotz-Nevanlinna functions.  
After these preparations, which are of interest on their own, \Cref{S4}  is devoted to the main result, \Cref{O55}. It gives a characterisation of the eigenvalues of a noncanonical self-adjoint extension $\wt A$ given by \eqref{eq:Krein} in terms of the $Q$-function $m$ and the parameter function $\tau$. We also give a simplification of the result for the definite case and discuss with several examples that the formulation is sharp otherwise. 
Finally, in \Cref{S6} we see that  all results hold also locally, i.e., if the assumptions on $Q$ and $A$ are only made locally in a neighborhood of the point of interest. 
Let us finish this introduction by pointing out that we recently learned that Seppo Hassi has also been working on this question  in the Hilbert space case. In the unpublished manuscript \cite{hassi:spexit} he has obtained a result similar to our \Cref{O55}.

\newpage

\tableofcontents

\section{Preliminaries}\label{sec:prelim}

In this section we describe the setup and collect well known results from both extension theory of symmetric operators and corresponding classes of analytic functions and their realisations. We start with the functions.

\subsection{Generalised Nevanlinna functions}

We use the notation $\mc B (\mc G,\mc H)$ for the set of bounded linear operators from a Hilbert space $\mc G$ into a Hilbert space $\mc H$ and abbreviate $\mc B (\mc G) \DE \mc B (\mc G,\mc G)$.

\begin{definition}
Let  a number  $\kappa \in \bb N \cup \{0\}$ be given, and let $Q$  be a function with  values in $\mc B (\mc G)$, which is meromorphic on $\bb C \setminus \bb R$, such that $Q(\lambda)^*=Q(\overline{\lambda})$ for all points of holomorphy of $Q$. 

The function $Q$ is said to belong to the class $\mc N_\kappa(\mc G)$ if the kernel \begin{align}
\label{O56}
K_Q(\lambda,\mu) \DE \frac{Q(\lambda)-Q(\mu)^*}{\lambda-\overline{\mu}}
\end{align}
has $\kappa$ negative squares, i.e., for any $m \in \bb N$, numbers $\lambda_1,\ldots,\lambda_m \in \bb C ^+$, and vectors $\xi_1,\ldots,\xi_m \in {\mc G}$ the matrix $\big((K_Q(\lambda_i,\lambda_j)\xi_i,\xi_j)\big)_{i,j=1}^m$ has at most $\kappa$ negative eigenvalues and $\kappa$ is minimal with this property.

A function $Q$ that belongs to a class $\mathcal N_\kappa(\mathcal G)$ for some $\kappa$ is called  a \emph{generalised Nevanlinna function (with $\kappa$ negative squares)}. Further,  by  $\mc N(\mc G)\!:=\!\bigcup\limits_{\kappa \geq0}\mc N_\kappa(\mc G)$ we denote  the set of all generalised Nevanlinna functions.

\begin{remark}
For $\kappa=0$ and $\mathcal G=\mathbb C$ these functions are the usual Herglotz-Nevanlinna functions, i.e., functions that holomorphically map the complex upper halfplane $\mathbb C^+$ into the closed upper halfplane $\mathbb C^+\cup\mathbb R$.
\end{remark}
\end{definition}
We give now several examples,  which we will return to later. 
\begin{example}\label{ex:N-null}
To start with, we have the scalar   $\mathcal N_0$-functions  
$$
Q_1(\lambda):=-\frac{1}{\lambda+1}+\lambda \quad \text{ and }\quad  Q_2(\lambda):=\sqrt{2\lambda}  \quad \text{ for  }\lambda\in\mathbb C^+
$$
as well as 
$$ 
Q_3(\lambda):=3i \quad \text{ and }\quad  Q_4(\lambda):= -\frac{1}{\lambda+4i} \quad \text{ for  }\lambda\in\mathbb C^+.
$$
Note that, by definition,
$$ 
 Q_3(\lambda)=-3i \quad \text{ and }\quad  Q_4(\lambda)= -\frac{1}{\lambda-4i} \text{ for  }\lambda\in\mathbb C^-.
$$
\end{example}
\begin{example}\label{ex:N-null-matrix}
Also the matrix-valued function
$$ 
 Q_5(\lambda):=\begin{pmatrix}
     -\frac1\lambda & 5 \\[1mm] 5 & \lambda
 \end{pmatrix}
$$
is a Herglotz-Nevanlinna function with no negative squares.
\end{example}
\begin{example}\label{ex:N-kappa}
Both the functions  
$$
Q_6(\lambda):=\frac{1}{\lambda}+6  \quad \text{ and }\quad  Q_7(\lambda):=\lambda^2\cdot  \sqrt{7\lambda}, 
$$
however, have  one negative square. This follows from the general fact that a function is a generalised Nevanlinna function if and only if it is of the form $Q(\lambda)=R(\overline\lambda)^*Q_0(\lambda) R(\lambda)$, where $R$ is a rational function and $Q_0\in\mathcal N_0$, see \cite{dijksma.langer.luger.shondin:2000, derkach.hassi.snoo:1999,luger:2002} for the scalar and the matrix case, respectively.
\end{example}

We denote by $\mf h(Q)$ the \emph{extended domain of holomorphy} of $Q$, i.e., the set consisting of  points in $\bb C \setminus \bb R$ where $Q$ is holomorphic  
as well as all points  $\mu \in \bb R$ such that $Q$ has a holomorphic continuation to a neigbourhood of  $\mu$.

\begin{example}
We have $\mf h(Q_1)=\mathbb C\setminus \{-1\}$ but   $\mf h(Q3)=\mf h(Q4)=\mathbb C\setminus \mathbb R$, as both functions have a jump in the imaginary part along the real line.   
\end{example}

Generalised Nevanlinna functions are closely related to self-adjoint operators in Pontryagin spaces.
In particular, the following characterisation holds. See e.g., \cite{behrndt.luger:2007} for a version with rather general assumptions as well as for references to earlier results.

\begin{proposition}\label{prop:representation}
A function $Q$ meromorphic in $\mathbb C\setminus\mathbb R$ with values in $\mathcal B(\mathcal  G)$ is a generalised Nevanlinna function if and only if there exist  a self-adjoint linear relation $A$ in a Pontryagin space $\Pi$, a bounded linear map $\gamma$ from {$\mc G$} to $\Pi$, and  a point $\lambda_0 \in \mf h(Q)\cap \rho(A) \cap \mathbb C^+$ such that for $\lambda \in \mf h(Q) \cap \rho (A)$ it holds
\begin{align}
\label{O28}
Q(\lambda)=Q(\lambda_0)^*+(\lambda-\overline{\lambda_0})\gamma^+ \big(I+(\lambda-\lambda_0)(A-\lambda)^{-1} \big)\gamma.
\end{align}
Here $\gamma^+$ denotes the adjoint of $\gamma$ with respect to the inner product on $\Pi$. Moreover, the representation \eqref{O28} can be chosen minimal, i.e., such that
\begin{align}
\label{O29}
\Pi= \overline{\Span} \big\{{\big(I+(\lambda-\lambda_0)(A-\lambda)^{-1} \big)\gamma} x \DS \lambda \in \rho(A), \, x \in {\mc G} \big\}.
\end{align}
\end{proposition}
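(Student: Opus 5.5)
The plan is a reproducing-kernel construction for the "only if" direction and a direct kernel computation for the "if" direction.

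\emph{Sufficiency.} Assume \eqref{O28} holds. Put $\gamma_\lambda \DE \big(I+(\lambda-\lambda_0)(A-\lambda)^{-1}\big)\gamma$ for $\lambda\in\rho(A)$. The first step is to check the resolvent identity $\gamma_\lambda=\big(I+(\lambda-\mu)(A-\lambda)^{-1}\big)\gamma_\mu$. From it and from $(A-\lambda)^{-1+}=(A-\overline\lambda)^{-1}$, a direct computation gives
\[
Q(\lambda)-Q(\mu)^*=(\lambda-\overline\mu)\,\gamma_\mu^+\gamma_\lambda ,
\]
that is, $K_Q(\lambda,\mu)=[\gamma_\lambda,\gamma_\mu]_\Pi$. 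A Gram matrix of vectors in a Pontryagin space with negative index $\kappa_\Pi$ has at most $\kappa_\Pi$ negative eigenvalues. Hence $K_Q$ has finitely many negative squares. The symmetry $Q(\lambda)^*=Q(\overline\lambda)$ follows from the representation in the same way, so $Q\in\mc N(\mc G)$.

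\emph{Necessity.} Let $Q\in\mc N_\kappa(\mc G)$. Form the linear space of finite formal sums $\sum \epsilon_{\lambda_i}\xi_i$, where $\lambda_i\in\mf h(Q)$ and $\xi_i\in\mc G$, with the inner product $[\epsilon_\lambda\xi,\epsilon_\mu\eta]\DE (K_Q(\lambda,\mu)\xi,\eta)$. Since the kernel has exactly $\kappa$ negative squares, factoring out the isotropic part and completing yields a Pontryagin space $\Pi$ with negative index $\kappa$. This uses the standard fact that a quotient with finitely many negative squares admits a Pontryagin completion.

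Next, define the relation
\[
A\DE \operatorname{span}\{(\epsilon_\lambda\xi-\epsilon_\mu\xi,\ \lambda\epsilon_\lambda\xi-\mu\epsilon_\mu\xi)\},
\]
and take its closure. It is symmetric because the kernel satisfies $(\lambda-\overline\mu)K_Q(\lambda,\mu)=Q(\lambda)-Q(\mu)^*$. The formulas $(A-\lambda)^{-1}(\epsilon_\mu\xi-\epsilon_\lambda\xi)$-type identities show that $\ran(A-\lambda)$ is dense for all nonreal $\lambda\in\mf h(Q)$. Continuity of $(A-\lambda)^{-1}$ on this dense set is the delicate point. In a Pontryagin space, boundedness of a symmetric relation's resolvent is not automatic from isometry-type identities, so I would follow the standard Krein--Langer argument. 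One passes to the Cayley transform, which is an isometry densely defined in a Pontryagin space; such isometries are bounded, because the negative part is finite-dimensional. It therefore extends to a unitary operator, and this gives a self-adjoint $A$ with $\lambda_0\in\rho(A)$ for a suitable $\lambda_0\in\mf h(Q)\cap\bb C^+$.

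Finally, set $\gamma\xi\DE\epsilon_{\lambda_0}\xi$. It is bounded since $\|\gamma\xi\|$ is controlled by $(\IM Q(\lambda_0)\xi,\xi)/\IM\lambda_0$ together with a finite-rank negative correction. The construction gives $\gamma_\lambda=\epsilon_\lambda$. Evaluating $[\gamma_\lambda\xi,\gamma_{\lambda_0}\eta]$ then recovers \eqref{O28}. Minimality \eqref{O29} holds by construction, because the $\epsilon_\lambda\xi$ span a dense set. For the identity on all of $\mf h(Q)\cap\rho(A)$, both sides are analytic, so uniqueness of continuation applies.

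I expect the main obstacle to be the operator-theoretic step in the Pontryagin setting: proving that the densely defined symmetric relation extends to a self-adjoint one with nonempty resolvent set containing $\lambda_0$ and with $\epsilon_\lambda=\gamma_\lambda$ there. In the Hilbert case, $\kappa=0$, this step is routine.
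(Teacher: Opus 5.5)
Your outline is the standard Krein--Langer reproducing-kernel construction. That is the argument the paper relies on: it does not prove this proposition itself but refers to Behrndt--Luger and the earlier literature.

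The sufficiency part is fine. So is the step you expect to be the main obstacle. A dense subspace of a Pontryagin space contains a $\kappa$-dimensional negative subspace $N$, so a densely defined isometry is bounded: it is finite-dimensional on $N$ and a Hilbert-space isometry on the $N$-orthocomplement. Density of $\ran(A-\overline{\lambda_0})$ then makes its closure unitary. It also forces $\ker(A-\lambda_0)=\{0\}$, so the Cayley transform is single-valued.

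The genuine gap is one step earlier. You form the space from points $\lambda_i$ in all of $\mf h(Q)$, including $\bb C^-$, and assert that the kernel has exactly $\kappa$ negative squares there. The definition of $\mc N_\kappa(\mc G)$ only controls Gram matrices with all $\lambda_i\in\bb C^+$. Nothing in your argument controls Gram matrices involving points of $\bb C^-$, in particular the mixed entries $K_Q(z,\overline w)=\frac{Q(z)-Q(w)}{z-w}$ with $z,w\in\bb C^+$. Bounds on the diagonal blocks alone bound nothing; think of $\begin{pmatrix}0&I_n\\ I_n&0\end{pmatrix}$. Without at least finiteness there is no Pontryagin completion, so the construction does not start. (On the diagonal $\mu=\overline\lambda$, and at real points, the kernel must moreover be defined via $Q'(\lambda)$.)

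You also cannot simply discard $\bb C^-$. With points from $\bb C^+$ only, $\ran(A-\lambda)$ is dense only for $\lambda\in\bb C^+$, the Cayley transform is merely an isometry, and $\overline A$ need not be self-adjoint. For $Q_3$ from \Cref{ex:i}, the resulting space is a Hardy space and $\overline A$ is a maximal symmetric, non-self-adjoint restriction of multiplication by the independent variable.

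The standard way to close the gap combines both constructions.
\begin{enumerate}
\item Run your construction with points of $\mf h(Q)\cap\bb C^+$ only, where the definition gives exactly $\kappa$ negative squares. Take $\lambda_0$ outside the at most $\kappa$ eigenvalues of $\overline A$ in $\bb C^+$.
\item Your argument still makes the Cayley transform bounded. So its closure $\overline V$ is an everywhere defined isometry with closed range, since $\overline V\,\overline V^{+}$ is a bounded projection onto it.
\item $\ran\overline V$ is a Pontryagin space with negative index $\kappa$, so $\mc D\DE(\ran\overline V)^{[\perp]}$ is a Hilbert space. The map $(x,d_1,d_2,\ldots)\mapsto(\overline Vx+d_1,d_2,d_3,\ldots)$ is then a unitary extension of $\overline V$ to $\Pi\oplus\mc D\oplus\mc D\oplus\cdots$, which still has negative index $\kappa$.
\item Its inverse Cayley transform $\wt A$ is self-adjoint and contains $A$. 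Your computation $\gamma_\lambda=\epsilon_\lambda$ gives \eqref{O28} on $\bb C^+$, and the symmetry argument from your sufficiency part carries it over to $\bb C^-$.
\item Hence $K_Q(\lambda,\mu)=[\gamma_\lambda,\gamma_\mu]$ for nonreal $\lambda,\mu\in\mf h(Q)\cap\rho(\wt A)$. By continuity, $K_Q$ has exactly $\kappa$ negative squares on $\mf h(Q)\setminus\bb R$.
\end{enumerate}
Only after this does your two-sided construction go through and yield the minimal realisation \eqref{O29}.
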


\begin{remark}
\label{O21}
Given $\Pi$, $A$, and $\gamma$, formula \eqref{O28} defines a function $Q$ uniquely up to a self-adjoint additive constant. Note that \eqref{O28} forces $\IM Q(\lambda_0)=(\IM \lambda_0) \gamma^+ \gamma$ while $\RE Q(\lambda_0)$ can be chosen at will.
\end{remark}
\begin{remark}
If the representation \eqref{O28} is minimal then $Q\in\mathcal N_\kappa{(\mathcal G)}$ if and only if $\kappa$ is the index of negativity of the Pontryagin space $\Pi$, i.e., the maximal dimension of a negative definite subspace of $\Pi$. In particular, if $\kappa=0$, i.e., $Q$ is a Herglotz-Nevanlinna function, then the space $\Pi$ can be chosen as a Hilbert space. 
\end{remark}

\Cref{prop:representation} gives rise to the following notation. 

\begin{definition}
Let $Q \in \mc N(\mc G)$ be represented as in \eqref{O28}, where $\Pi$ is a Pontryagin space, $A$ is a self-adjoint linear relation in $\Pi$, and $\gamma: {\mc G} \to \Pi$ is a bounded linear map. Assume that the minimality condition \eqref{O29} holds. Then $(\Pi,A,\gamma)$ is called a \emph{minimal realisation} of $Q$. 
\end{definition}
\begin{remark} This property is important for us since 
minimality of a realisation $(\Pi,A,\gamma)$ implies $\mf h(Q)=\rho(A)$, cf., \cite{krein.langer:1973}. 
\end{remark}

For later use we introduce the notation
\begin{align}
\label{O58}
\gamma (\lambda) &\DE (I+(\lambda-\lambda_0)(A-\lambda)^{-1})\gamma, &&\lambda \in \rho (A).
\end{align}
Then $\gamma (\lambda)$ and $\gamma (\mu)$ are related by
\begin{align}
\label{O62}
\gamma (\mu) &= (I+(\mu-\lambda)(A-\mu)^{-1})\gamma (\lambda), &&\mu,\lambda \in \rho (A)
\end{align}
and clearly we have $\gamma (\lambda_0)=\gamma$. We also note that the representation \eqref{O28} can be written as
\begin{align}
\label{O59}
Q(\lambda)=Q(\lambda_0)^*+(\lambda-\overline{\lambda_0})\gamma^+ \gamma (\lambda)
\end{align}
and that it holds
\begin{align}
    \label{O93}
    K_Q(\lambda,\mu)=\frac{Q(\lambda)-Q(\mu)^*}{\lambda-\overline{\mu}}=\gamma(\mu)^+ \gamma (\lambda), \qquad \lambda, \mu \in \mf h(Q).
\end{align}

For later use let us exemplify these notions for an easy example. 
\begin{example}\label{ex:i}
 The scalar $\mathcal N_0$-function 
    $$Q_{{3}}(\lambda)=\left\{ \begin{array}{rc}
        {3}i,  &\lambda \in\mathbb C^+\\
        -{3}i, &\lambda \in\mathbb C^-
    \end{array} 
    \right.
    $$
    has a minimal  realisation in the Hilbert space  $L^2(\mathbb R,\sigma)$, where $\sigma$ denotes the scaled Lebesgue measure $d\sigma(t)=\frac{{3}}\pi d t$. Here $A$ can be chosen as the multiplication operator by the independent variable, i.e., 
    $$(A x)(t)=t\cdot x(t), \text{ with  }\gamma:=\frac1{t-\lambda_0}\text{ and  }\gamma(\lambda):=\frac1{t-\lambda}.
    $$
    Note that the function $Q_{{3}}$ cannot be extended holomorphically to any  real point and  $\rho(A)=\mathbb C\setminus \mathbb R=\mf h(Q_{{3}})$. 
\end{example}

In the following, together with a generalised Nevanlinna function we will also consider its pointwise inverse.

\begin{definition}
A generalised Nevanlinna function $Q$ is called \emph{regular} if there exists $\mu_0 \in \mf h (Q)$ such that $Q(\mu_0)$ is boundedly invertible. In this case  set
\begin{align*}
\wh Q(\lambda) \DE -Q(\lambda)^{-1}
\end{align*} 
for all $\lambda \in \mf h(Q)$ with $0 \in \rho (Q(\lambda))$.
\end{definition}

The kernels of $\wh Q$ and $Q$ are related by
\begin{align}
\label{O61}
&K_{\wh Q}(\lambda,\mu)=\wh Q(\mu)^* K_Q(\lambda,\mu) \wh Q(\lambda), \\
\label{O54}
&\big(K_{\wh Q}(\lambda,\mu)\xi_{\lambda},\xi_{\mu} \big)=\big(K_Q(\lambda,\mu)\wh Q(\lambda)\xi_{\lambda},\wh Q(\mu)\xi_{\mu}\big)
\end{align}
for $\lambda,\mu \in \mf h(Q) \cap \mf h(\wh Q)$ and arbitrary vectors $\xi_{\lambda},\xi_{\mu} \in \mc G$. Thus $\wh Q$ is again a generalised Nevanlinna function. It has a minimal realisation of the same form as \eqref{O59}, and one such realisation is given in the following lemma in terms of a minimal realisation of $Q${, see \cite[Proposition 2.1]{luger:2002} or \cite[Theorem 3.1]{emmel.luger:2023}. 

\begin{lemma}
\label{O1}
Let $Q \in  \mc N(\mc G)$ be regular, and let $(\Pi,A,\gamma )$ be a minimal realisation of $Q$. Fix $\lambda_0 \in \mf h (Q)$ such that  $0 \in \rho (Q(\lambda_0))$, and define the relation $\wh{A}$ by
\begin{align}
\label{O24}
(\wh{A}-\lambda_0)^{-1} \DE (A-\lambda_0)^{-1}+\gamma(\lambda_0)\wh Q(\lambda_0)\gamma(\overline{\lambda_0})^+.
\end{align}
Further let $\hat{\gamma} \DE \gamma \wh Q(\lambda_0)$ and
\begin{align}
\hat{\gamma}(\lambda) \DE \big(I+(\lambda-\lambda_0)(\wh{A}-\lambda)^{-1} \big)\hat{\gamma}, \qquad \lambda \in \rho (\wh{A}).
\end{align}
Then $\wh Q$ is in  $\mc N(\mc G)$ and it has the minimal realisation $(\Pi,\wh{A},\hat{\gamma})$. The identities
\begin{align}
(\wh{A}-\lambda)^{-1} = (A-\lambda)^{-1}+\gamma(\lambda)\wh Q(\lambda)\gamma(\overline{\lambda})^+, \qquad \hat{\gamma}(\lambda) = \gamma(\lambda)\wh Q(\lambda)
\end{align}
hold for all $\lambda \in \mf h (Q) \cap \mf h (\wh Q)$.
\end{lemma}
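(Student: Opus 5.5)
Starting from \eqref{O24}, I first check that the right-hand side is the resolvent of a self-adjoint relation. For this I set
\[
R(\lambda)\DE(A-\lambda)^{-1}+\gamma(\lambda)\wh Q(\lambda)\gamma(\overline{\lambda})^+, \qquad \lambda\in\mf h(Q)\cap\mf h(\wh Q),
\]
and verify that $R$ is a pseudo-resolvent: the resolvent identity $R(\lambda)-R(\mu)=(\lambda-\mu)R(\lambda)R(\mu)$ and the symmetry $R(\lambda)^+=R(\overline\lambda)$. The symmetry follows from $\wh Q(\lambda)^*=\wh Q(\overline\lambda)$. The resolvent identity is a direct computation using four facts: the resolvent identity for $A$; the relation \eqref{O62}, in the form $(A-\lambda)^{-1}\gamma(\mu)=\frac{\gamma(\lambda)-\gamma(\mu)}{\lambda-\mu}$ together with its adjoint; the kernel identity \eqref{O93}, $\gamma(\overline\lambda)^+\gamma(\mu)=\frac{Q(\mu)-Q(\lambda)}{\mu-\lambda}$; and $\wh Q(\lambda)-\wh Q(\mu)=\wh Q(\lambda)\big(Q(\lambda)-Q(\mu)\big)\wh Q(\mu)$. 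With these, the cross terms collapse and the identity holds.

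A pseudo-resolvent in a Pontryagin space is the resolvent of a unique linear relation $\wh A$, namely $\wh A=\{(R(\lambda)f,\,f+\lambda R(\lambda)f)\}$, and this relation is independent of $\lambda$. The symmetry of $R$ makes $\wh A$ self-adjoint. Taking $\lambda=\lambda_0$ gives exactly \eqref{O24}, so this $\wh A$ is the relation in the statement, and the formula for $(\wh A-\lambda)^{-1}$ holds on $\mf h(Q)\cap\mf h(\wh Q)$.

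Next I compute $\hat\gamma(\lambda)=(I+(\lambda-\lambda_0)(\wh A-\lambda)^{-1})\gamma\wh Q(\lambda_0)$. I insert the formula for $(\wh A-\lambda)^{-1}$ and again use \eqref{O62} and \eqref{O93}, this time with $\gamma(\overline\lambda)^+\gamma(\lambda_0)=\frac{Q(\lambda_0)-Q(\lambda)}{\lambda_0-\lambda}$. The result is
\[
\hat\gamma(\lambda)=\gamma(\lambda)\wh Q(\lambda_0)-\gamma(\lambda)\wh Q(\lambda)\big(Q(\lambda_0)-Q(\lambda)\big)\wh Q(\lambda_0)=\gamma(\lambda)\wh Q(\lambda).
\]

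Then I check the realisation. Using \eqref{O93},
\[
\hat\gamma(\mu)^+\hat\gamma(\lambda)=\wh Q(\mu)^*K_Q(\lambda,\mu)\wh Q(\lambda)=K_{\wh Q}(\lambda,\mu)
\]
by \eqref{O61}. Hence $\wh Q(\lambda)-\wh Q(\mu)^*=(\lambda-\overline\mu)\hat\gamma(\mu)^+\hat\gamma(\lambda)$. Setting $\mu=\lambda_0$ yields the representation \eqref{O28}, with $\wh Q(\lambda_0)^*$ as the constant. By \Cref{prop:representation} this shows $\wh Q\in\mc N(\mc G)$, and the identity extends from $\mf h(Q)\cap\mf h(\wh Q)$ to all of $\rho(\wh A)$ by analytic continuation.

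The main obstacle is minimality, and I handle it as follows. Since $\wh Q(\lambda)$ is boundedly invertible, $\ran\hat\gamma(\lambda)=\ran\gamma(\lambda)$ for $\lambda\in\mf h(Q)\cap\mf h(\wh Q)$. The span in \eqref{O29} for $\hat\gamma$ therefore contains $\Span\{\gamma(\lambda)x: \lambda\in\mf h(Q)\cap\mf h(\wh Q),\ x\in\mc G\}$. The set $\mf h(Q)\cap\mf h(\wh Q)$ omits only a discrete set from $\bb C\setminus\bb R$, because the nonreal points where $Q(\lambda)$ is not invertible are isolated by analytic Fredholm-type arguments, given invertibility at $\lambda_0$. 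Using $\gamma(\mu)=(I+(\mu-\lambda)(A-\mu)^{-1})\gamma(\lambda)$ and analyticity in $\mu$, every $\gamma(\mu)x$ is a limit of elements of this span. So the closed span equals $\Pi$ by minimality of $(\Pi,A,\gamma)$. Here I will need a short justification that the closed span of $\{\gamma(\lambda)x\}$ over a set with accumulation points in each half-plane already equals $\Pi$; this follows from anti-analyticity of $\lambda\mapsto(\gamma(\lambda)x,f)$ in each half-plane.
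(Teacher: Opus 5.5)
The paper does not prove this lemma itself; it only refers to \cite[Proposition~2.1]{luger:2002} and \cite[Theorem~3.1]{emmel.luger:2023}. Your direct verification is therefore a self-contained alternative route, and the algebra checks out:
\begin{itemize}
\item the pseudo-resolvent identity for $R(\lambda)$ and the symmetry $R(\lambda)^+=R(\overline{\lambda})$, which give a self-adjoint relation $\wh A$ satisfying \eqref{O24};
\item the formula $\hat\gamma(\lambda)=\gamma(\lambda)\wh Q(\lambda)$;
\item the kernel identity $\hat\gamma(\mu)^+\hat\gamma(\lambda)=K_{\wh Q}(\lambda,\mu)$ via \eqref{O61}, which yields the representation \eqref{O28} for $\wh Q$;
\item the range equality $\ran\hat\gamma(\lambda)=\ran\gamma(\lambda)$ used for minimality.
\end{itemize}

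One justification does not hold as written. You claim that the nonreal points where $Q(\lambda)$ is not invertible are isolated ``by analytic Fredholm-type arguments''. The analytic Fredholm theorem needs $Q(\lambda)$ to be Fredholm, e.g.\ a compact perturbation of an invertible operator. That fails in general for $\mc B(\mc G)$-valued $Q$ with $\dim\mc G=\infty$, and the paper explicitly allows infinite defect. You need this fact to know that $\wh Q$ is meromorphic on $\bb C\setminus\bb R$, i.e.\ that $\wh Q\in\mc N(\mc G)$ in the sense of the paper's definition.

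The claim is true, and your own construction proves it. Since $\lambda_0\in\rho(\wh A)$, the relation $\wh A$ is self-adjoint in a Pontryagin space with nonempty resolvent set, so $\sigma(\wh A)\setminus\bb R$ is finite. Define
\[
F(\lambda)\DE\wh Q(\lambda_0)^*+(\lambda-\overline{\lambda_0})\hat\gamma^+\hat\gamma(\lambda).
\]
This function is holomorphic on $\rho(\wh A)$. By your kernel identity it coincides with $\wh Q$ on $D\DE\mf h(Q)\cap\mf h(\wh Q)$, and $D$ contains neighbourhoods of $\lambda_0$ and $\overline{\lambda_0}$. The identity theorem then gives $F(\lambda)Q(\lambda)=Q(\lambda)F(\lambda)=-I$ on the connected sets $\rho(A)\cap\rho(\wh A)\cap\bb C^{\pm}$. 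Hence $Q(\lambda)$ is invertible outside the finite set $(\sigma(A)\cup\sigma(\wh A))\setminus\bb R$. This also justifies your analytic continuation of the representation to all of $\rho(\wh A)$.

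For minimality you do not need discreteness at all. The identity-theorem argument in your last sentence already suffices, since $D$ contains an open subset of each of the connected sets $\rho(A)\cap\bb C^{\pm}$. You should add that a closed subspace of a Pontryagin space with trivial orthogonal complement is the whole space. As a minor correction, $\lambda\mapsto[\gamma(\lambda)x,f]$ is holomorphic, not anti-holomorphic.
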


\subsection{$Q$-functions and Krein's formula}
\label{OS22}

In this subsection we recall some classical notions and results from the extension theory  of symmetric operators. See{,} e.g., \cite{derkach:1999, dijksma.snoo:1987,dijksma.snoo:1987b}  for details.

Let $S$ be a closed symmetric operator with finite  or infinite defect $(n,n)$ in a Pontryagin space $\Pi$. Then there exists at least one self-adjoint extension. We fix such a self-adjoint extension $A$ of $S$ in $\Pi$ and a point $\lambda_0\in\rho(A)\cap\mathbb C^+$. Moreover, choose a Hilbert space $\mc G$ with $\dim\mc G=n$ and fix a linear bijection $\gamma \in \mc B(\mc G,\ker (S^+-\lambda_0))$. Define the \emph{defect family} $\gamma (\lambda)$ as in \eqref{O58}. 
We further assume that $S$ is \emph{simple}, i.e., the following minimality condition holds.
\begin{align*}
\Pi = \overline{\Span} \{\ker (S^+-\lambda) \DS \lambda \in \rho(A) \}=\overline{\Span} \{\gamma (\lambda)x \DS \lambda \in \rho(A), \, x \in \mc G \}.
\end{align*}
Then there exists a function $m \in \mc N(\mc G)$ such that
\begin{align}
\label{O48}
\frac{m(\lambda)-m(\mu)^*}{\lambda-\overline{\mu}}=\gamma(\mu)^+ \gamma (\lambda), \qquad \lambda, \mu \in \rho (A);
\end{align}
this $m$ is called \emph{$Q$-function} of the triple $(S,A,\gamma)$. It is uniquely determined up to a self-adjoint additive constant, and explicitly given by
\begin{align}
\label{O49}
m(\lambda) =m(\lambda_0)^* + (\lambda- \overline{\lambda_0})\gamma^+ \big(I +(\lambda-\lambda_0)(A-\lambda)^{-1} \big)\gamma
\end{align}
for $\lambda \in \rho (A)$. Hence a minimal realisation of $m$ is given by $(\Pi,A,\gamma)$.

\begin{remark}
    We use here the notion $Q$-function (as e.g., in \cite{krein.langer:1973}), but {note that $m$ is nothing but the} \emph{Weyl function} {of an associated boundary triple}, see e.g., {\cite[Theorem~1]{derkach:1994}}.
\end{remark}

Note that the function $m$ defined by \eqref{O49} is a generalised Nevanlinna function, and it  is \emph{strict}, i.e., for any $\mu \in \rho(A)$ we have
\begin{align}
\label{O41}
\bigcap_{\lambda \in \rho(A)} \ker \frac{m(\lambda)-m(\mu)^*}{\lambda-\overline{\mu}}=\{0\}.
\end{align}
This holds because of $\ker \gamma =\{0\}$. 
Strictness is thus a necessary condition for $m$ to be a $Q$-function of some triple $(S,A,\gamma)$. It is well-known that it is also a sufficient condition.

In this setting, just like for $S$ acting in a Hilbert space, Krein's formula describes all its self-adjoint extensions. The parameter $\tau$ is a generalised Nevanlinna family, that is, a relation-valued generalised Nevanlinna function. We refrain from giving the precise definition, as we will not use it in the following, but the interested reader can find it in \cite[Definition~2]{derkach:1994}.

\begin{theorem}[{\cite[Theorem~3]{derkach:1994}}]
Let $\Pi$, $S$, $A$, and $\gamma(\lambda)$ as above and $m$ a corresponding $Q$-function.
 Then for any self-adjoint extension $\wt A$ of $S$ in a Pontryagin space $\widetilde\Pi\supset \Pi$ there is a unique generalised Nevanlinna family $\tau$ with $0 \in \rho (m(\lambda_0)+\tau (\lambda_0))$ such that
\begin{align}
\label{eq:kreinII}
P(\wt A-\lambda)^{-1}|_{\Pi}=(A-\lambda)^{-1}-\gamma (\lambda)(m(\lambda)+\tau(\lambda))^{-1}\gamma(\overline{\lambda})^+,
\end{align}
where $ P$ denotes the orthogonal projection from $\widetilde\Pi$ into $\Pi$.
Conversely, for every generalised Nevanlinna family $\tau$ with $0 \in \rho (m(\lambda_0)+\tau (\lambda_0))$ there is a self-adjoint extension $\wt A$ of $S$ in a Pontryagin space {$\widetilde\Pi\supset$} $\Pi$ such that \eqref{eq:kreinII} holds. Moreover, the extension $\wt A$ can be chosen to be $\Pi$-minimal, i.e., 
\begin{align}
\label{eq:krein-minimal}
\wt\Pi=\overline{\Span} \big\{(I+(\lambda-\lambda_0)(\widetilde A-\lambda)^{-1})\gamma \DS \lambda \in \rho (\widetilde A), \, x \in {\mc G} \big\},
\end{align}
and any two $\Pi$-minimal extensions satisfying \eqref{eq:kreinII} are unitarily equivalent.

\end{theorem}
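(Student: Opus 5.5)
The statement is Krein's formula in Pontryagin spaces, in the form of Derkach. The plan is to derive it from the boundary-triple / Weyl-function machinery. Throughout, $m$ is identified with the Weyl function of a boundary triple $(\mc G,\Gamma_0,\Gamma_1)$ for $S^+$ with $A=\ker\Gamma_0$, as in the remark following \eqref{O49}.

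\textbf{Step 1: from an extension to a parameter.} Let $\wt A$ be a self-adjoint extension of $S$ in $\wt\Pi\supset\Pi$.

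1. Compress: the compressed resolvent $R(\lambda)\DE P(\wt A-\lambda)^{-1}|_{\Pi}$ is a generalised resolvent of $S$. It is a relation-valued Nevanlinna-type object, and $R(\lambda)-(A-\lambda)^{-1}$ vanishes on $\ran(S-\overline\lambda)$.
2. Factor the difference: since $\Pi\ominus\ran(S-\overline\lambda)=\ker(S^+-\lambda)=\ran\gamma(\lambda)$, we get
\[
R(\lambda)-(A-\lambda)^{-1}=\gamma(\lambda)\,X(\lambda)\,\gamma(\overline\lambda)^+
\]
for a unique $X(\lambda)\in\mc B(\mc G)$ with $X(\lambda)^*=X(\overline\lambda)$.
3. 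Define the parameter: set $\tau(\lambda)\DE -X(\lambda)^{-1}-m(\lambda)$, interpreted as a linear relation in $\mc G$. Equivalently, $\tau(\lambda)=\{(\Gamma_0 \hat f,-\Gamma_1\hat f): \hat f\in \wt A_\lambda\}$, where $\wt A_\lambda$ is the Štraus family of $\wt A$ (the $\Pi$-compression of $\wt A$ at $\lambda$).

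\textbf{Step 2: $\tau$ is a generalised Nevanlinna family.} For this, compute the kernel $(\tau(\lambda)-\tau(\mu)^*)/(\lambda-\overline\mu)$ via Green's identity for $\Gamma_0,\Gamma_1$. It equals a Gram kernel built from the $\wt\Pi\ominus\Pi$-components of $(\wt A-\lambda)^{-1}$. Because $\wt\Pi$ is a Pontryagin space, this kernel has finitely many negative squares.

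The condition $0\in\rho(m(\lambda_0)+\tau(\lambda_0))$ follows from step 3 of Step 1: there $m+\tau=-X^{-1}$, so its inverse is the bounded operator $-X(\lambda_0)$ (possibly after choosing $\lambda_0$ non-exceptional). Uniqueness of $\tau$ is immediate, because $\gamma(\lambda)$ is injective and $\gamma(\overline\lambda)^+$ is surjective.

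\textbf{Step 3: from a parameter to an extension.} Given $\tau$, define $R(\lambda)$ by the right-hand side of \eqref{eq:kreinII}. Its Nevanlinna kernel is then computed using \eqref{O48} together with the kernel of $-(m+\tau)^{-1}$. The relation \eqref{O61} shows that the kernel of $-(m+\tau)^{-1}$ inherits finitely many negative squares from $m$ and $\tau$. Hence $R(\lambda)$ has a kernel of the form
\[
\frac{R(\lambda)-R(\mu)^*}{\lambda-\overline\mu}-R(\mu)^*R(\lambda)
\]
with finitely many negative squares. By the reproducing-kernel construction (the same one underlying \Cref{prop:representation}), $R$ is then the compression of the resolvent of a self-adjoint relation $\wt A$ in a Pontryagin space $\wt\Pi\supset\Pi$. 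One checks that $\wt A$ extends $S$ because $R(\lambda)$ agrees with $(A-\lambda)^{-1}$ on $\ran(S-\lambda)$.

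\textbf{Step 4: minimality and uniqueness.} Restrict to the closed span \eqref{eq:krein-minimal}. This subspace is invariant for the resolvent and contains $\Pi$ by simplicity of $S$, so the restriction is a $\Pi$-minimal extension realising the same $R$. For uniqueness, define $U$ on generators by
\[
(I+(\lambda-\lambda_0)(\wt A_1-\lambda)^{-1})\gamma x\mapsto (I+(\lambda-\lambda_0)(\wt A_2-\lambda)^{-1})\gamma x .
\]
Since inner products of generators are determined by $R$ alone, $U$ is isometric. It extends by continuity to a unitary between the two spaces, which are Pontryagin spaces with the same negative index.

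\textbf{Main obstacle.} I expect the hardest part to be the negative-square count in Steps 2 and 3 in the indefinite setting. In particular, one must verify that the reproducing-kernel space is actually a Pontryagin space containing $\Pi$ isometrically, and handle points where $m+\tau$ is not invertible or $\tau$ is multivalued. I would first work at non-exceptional points in $\mf h(m)\cap\mf h(\tau)$ and extend to the remaining points by analytic continuation.
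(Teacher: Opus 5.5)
The paper gives no proof of this theorem; it is quoted from Derkach, so your outline has to stand on its own. Its architecture is the standard one: boundary triple and \v{S}traus family for the direct part, and a generalised-resolvent/dilation theorem for the converse. Steps~1--2 are essentially right, with one slip. $R(\lambda)-(A-\lambda)^{-1}$ vanishes on $\ran(S-\lambda)=\ker\gamma(\overline\lambda)^+$, not on $\ran(S-\overline\lambda)$. The factorisation also needs the adjoint statement that the range of the difference lies in $\ran(S-\overline\lambda)^{\perp}=\ran\gamma(\lambda)$.

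\textbf{The gap: the negative-square count in Step~3.} Put $X\DE-(m+\tau)^{-1}$ and let $K_X$ be its kernel as in \eqref{O56}. The computation you describe gives
\[
\frac{R(\lambda)-R(\mu)^*}{\lambda-\overline\mu}-R(\mu)^*R(\lambda)=\gamma(\overline\mu)\Big(K_X(\lambda,\mu)-X(\mu)^*K_m(\lambda,\mu)X(\lambda)\Big)\gamma(\overline\lambda)^+ ,
\]
which is a \emph{difference} of two kernels. Your appeal to \eqref{O61} shows $K_X$ has finitely many negative squares, but that gives no control over this difference. Subtracting $X(\mu)^*K_m(\lambda,\mu)X(\lambda)$, which in general has infinitely many positive squares, can create infinitely many negative ones.

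What makes the converse work is an exact cancellation of the $m$-part. For operator-valued $\tau$, $K_X(\lambda,\mu)-X(\mu)^*K_m(\lambda,\mu)X(\lambda)=X(\mu)^*K_\tau(\lambda,\mu)X(\lambda)$. For relation-valued $\tau$, the pairing of the bracket with $g,k$ equals $\frac{(h',l)-(h,l')}{\lambda-\overline\mu}$, where $(h;h')=\big(-X(\lambda)g;\,g+m(\lambda)X(\lambda)g\big)\in\tau(\lambda)$ and $(l;l')=\big(-X(\mu)k;\,k+m(\mu)X(\mu)k\big)\in\tau(\mu)$. Only this identity bounds the number of negative squares by that of $\tau$. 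It is also what controls the negative index of $\wt\Pi\ominus\Pi$.

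\textbf{Further indefinite-space problems in Steps~3--4.} Several arguments there are Hilbert-space arguments that fail in a Pontryagin space.
\begin{itemize}
\item Extension property. That $R(\lambda)$ agrees with $(A-\lambda)^{-1}$ on $\ran(S-\lambda)$ only shows that $(I-P)(\wt A-\lambda)^{-1}(S-\lambda)f$ is \emph{neutral}. To get $S\subseteq\wt A$ you must show this vector is orthogonal to all $(I-P)(\wt A-\mu)^{-1}k$, $k\in\Pi$, and then invoke minimality of the model. The orthogonality follows from the factor $\gamma(\overline\lambda)^+$ in the kernel above, since $\gamma(\overline\lambda)^+(S-\lambda)f=0$.
\item Containing $\Pi$. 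Your claim in Step~4 that the span \eqref{eq:krein-minimal} contains $\Pi$ ``by simplicity of $S$'' is unjustified. Simplicity concerns vectors built with $(A-\lambda)^{-1}$, whereas your generators use $(\wt A-\lambda)^{-1}$. Their $\Pi$-components are $\gamma(\lambda)\big(m(\lambda)+\tau(\lambda)\big)^{-1}\big(\tau(\lambda)+m(\lambda_0)\big)x$, not $\gamma(\lambda)x$.
\item Restriction. A closed resolvent-invariant subspace of a Pontryagin space may be degenerate, so restricting $\wt A$ to it need not give a self-adjoint relation in a Pontryagin space. Work instead with $\overline{\Span}\{\Pi,(\wt A-\lambda)^{-1}\Pi\}$ and factor out its isotropic part, which is finite-dimensional, invariant and orthogonal to $\Pi$. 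Alternatively, note that the reproducing-kernel model of Step~3 is already minimal.
\item Uniqueness. A densely defined isometry between Pontryagin spaces is not automatically continuous. For $U$ you need the extension theorem for isometries with dense domain and dense range, which the paper uses in the proof of \Cref{O55}. If you take the generators $h,(\wt A_j-\lambda)^{-1}h$ with $h\in\Pi$, you also get $U|_{\Pi}=I$ for free.
\end{itemize}
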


\begin{remark}
In the case of $\Pi$-minimality the sum of the negative indices of $m$ and $\tau$ equals the negative index of the extension space $\widetilde\Pi$. Moreover, the parameter $\tau$ is a constant if and only if the extension is canonical, i.e., $\widetilde \Pi=\Pi$.    
\end{remark}

\begin{remark}
    In \Cref{O55} we shall give a complete description of the eigenvalues of $\wt A$, assuming that $\tau$ is a generalised Nevanlinna function.
    Note that if $\tau$ is not single-valued, then the perturbation term in \eqref{eq:kreinII} has a kernel, which means that the extensions $A$ and $\wt A$ have more in common than only $S$. In this sense, for our purpose, it is enough to consider parameters $\tau$ which are functions, and we do not need Nevanlinna families in full generality.
\end{remark}

\section{Directional functions and generalised values}
\label{S3}

Generalised Nevanlinna functions are by definition meromorphic on $\mathbb C\setminus \mathbb R$. A point on the real line, however, might belong to the extended domain of holomorphy, $\mf h(Q)$, or it can be  an (isolated) pole or a nonisolated singularity. 

\begin{example}
   For the function 
    $Q_{3}$ 
    from  \Cref{ex:N-null} the (nontangential) limit 
     at the point $\lambda=0$ (as well as at every other real point) is nonreal. 
\end{example}
\begin{example}
    The function $Q_1$ from \Cref{ex:N-null} obviously has a limit at $\lambda=0$, as $Q_1$ is analytic at  this point. Also the $\mathcal N_0$-function
    $$
    Q_8(\lambda):=\frac{\lambda}{8-i\lambda}=-\left(-\frac8\lambda+i\right)^{-1}
    $$
    has a real limit as $\lambda$ tends to $0$ (nontangentially). However, at all other real points the limit is nonreal. 
\end{example}

\begin{example}
    Also the $\mathcal N_0$-function $Q_2(\lambda)=
    \sqrt{2\lambda}$ (with branch cut along the negative real line)
     has at the point $\lambda=0$ a nontangential limit that is real, however, it is weaker as we will see below. 
\end{example}

\begin{example}
    The matrix function $Q_5$ from \Cref{ex:N-null-matrix} has at $\lambda=0$ a singularity in one direction, but has a limit in the orthogonal direction.  
\end{example}

In the following we are going to  make this observation precise.

\subsection{Directional functions and directional pairs}

For a function $Q \in \mc N(\mc G)$ and a real point $\omega$ we introduce  the notion of $Q$ having a \emph{generalised value}. This unifies and generalises the well-established notions of {generalised poles and zeroes}.

\begin{definition}
\label{O31}
Let $Q \in \mc N(\mc G)$ and ${\omega} \in \bb R$ be given {and} let $\mc U_{{\omega}}$ be a neighborhood of ${\omega}$.
Suppose $f : \, \mc U_{{\omega}} \cap \, \mf h(Q) \, \cap \bb C^+ \to \mc G$ is an analytic function {and denote $g(\lambda)\DE Q(\lambda)f(\lambda)$. If }  the nontangential limits\footnote{
The equality $\lim_{\lambda \hat{\to} {\omega}} f(\lambda)={f_0}$ means that for every angle $\theta \in (0,\frac{\pi}{2})$ and corresponding angular domain $W_{\omega,\theta} \DE \{{\omega} + re^{i\phi} \DS r>0, \, |\phi-\frac{\pi}{2}| \leq \frac{\pi}{2}-\theta \}$ we have \[\lim_{\substack{\lambda \to {\omega} \\ \lambda \in W_{{\omega},\theta}}} f(\lambda)={f_0}. \]
}
\begin{align}\label{eq:diff}
& \lim_{\lambda \hat{\to} {\omega}} f(\lambda){=: f_0}, \qquad  \lim_{\lambda \hat{\to} {\omega}} Q(\lambda)f(\lambda){=\lim_{\lambda \hat{\to} {\omega}} g(\lambda)=:g_0}
\end{align}
exist, and 
\begin{align}
\label{O37}
&\bigg(\frac{Q(\lambda)-Q(\lambda)^*}{\lambda-\overline{\lambda}}f(\lambda),f(\lambda) \bigg)_{\mc G} \quad \text{is bounded as }\lambda \hat{\to} {\omega},
\end{align}
then the function $f$ is called a \emph{directional function} of $Q$ at ${\omega}$. In this case, we call $({f_0};{g_0})$ a \emph{directional pair} of $Q$ at ${\omega}$. If there exists a nontrivial directional function at $\omega$ (i.e., the corresponding directional pair is nontrivial), then we say that the function $Q$ has a nontrivial \emph{partial generalised value} (or simply \emph{generalised value}) at ${\omega}$.

\end{definition}

Note that, by definition, the trivial function $f\equiv0$ is a directional function, and the trivial pair $(0;0)$ is a directional pair for every $Q \in \mc N(\mc G)$ and every $\omega \in \bb R$. The reason that these uninteresting cases are included here, will be more evident when we deal with  the set of all directional pairs as a linear relation.
Let us now go back to the examples from the beginning of this section. 
\begin{example}
For the functions $Q_1$ and $Q_8$ the constant function $f(\lambda):=1$ is a directional function and gives the directional pairs $(1;-1)$ and $(1;0)$, respectively. 
\end{example}

\begin{example}\label{ex:zero-pole}
For the matrix function $Q_5$  every vector function of the form $f(\lambda):=\binom{a\lambda}{b}$ with $a,b\in\mathbb C$ serves as a directional function. The corresponding directional pair is then $\big(\binom 0b{;} \binom{5b-a}{0} \big)$.
Note, in particular, that in order to reach the directional pair $\left(\binom 01 {;}\binom 00 \right)$ the directional function cannot be a constant. 
\end{example}
\begin{example}
The function $Q_3$ (as well as $Q_4$) does not have a nontrivial generalised value at any real point, as, due to the nonreal limit, condition \cref{O37} cannot be satisfied. 
\end{example}
\begin{example}
Even though  the function $Q_2$ has a real limit at $\lambda=0$, there does not exist a nontrivial directional function at this point.  

The function $Q_7$ does have a generalised value at $\lambda =0$, as directional function serves the constant function $f(\lambda)=1$.
\end{example}
We also want to point out that directional functions cannot always  be chosen polynomially, even for rational functions in $\mc N_0(\mc G)$, cf.,  \Cref{ex:ell2}.

\begin{remark}\label{re:earlier}
Note that  Definition \ref{O31} also includes some well-established notions:
\begin{itemize}
\item 
If $f_0\neq0$   and $g_0=0$, then  $f$ is  a root function of $Q$ at $\omega$ with root vector $f_0$. In this case $\omega$ is a so-called generalised zero of $Q$, see e.g., \cite{borogovac.langer:1988,borogovac.luger:2014}
\item 
If $f_0=0$   and $g_0\neq0$, then  $f$ is  a pole-cancellation function  of $Q$ at $\omega$ with pole vector $g_0$, and $\omega$ is a   so-called a generalised pole of $Q$.
\item In \cite{behrndt.luger:2007} the notion {``}generalised value{''} was already used in the special case when the direction is not needed, i.e. if every constant function $f(\lambda)\equiv f_0$ is a directional function of $Q$ at $\omega$ then it was said that $Q$ attains a generalised value at $\omega$, cf., \cite{behrndt.luger:2007}.
\end{itemize}
The existence of a nontrivial directional function and hence of a nontrivial directional pair $(f_0;g_0)$ can thus be interpreted as {$Q$ attaining} a kind of directional value $g_0$  in direction $f_0$. 
\end{remark}

Originally, generalised poles and zeroes were defined as eigenvalues of the representing relation of $Q$ and $\widehat Q$, respectively. Later these operator-theoretically defined points have been characterised analytically, in terms of the existence of pole-cancellation functions and root functions. See \cite{borogovac.langer:1988}
for early results in this direction and \cite{borogovac.luger:2014} for a recent discussion (with references) and a comparably easy construction. However, our present approach is the opposite of the historical one, i.e.,  we choose to start  with the analytic definition and will give other characterisations later on, in particular,  in Theorem \ref{O33}.

As a first step in this direction, we construct for each directional pair $({f_0};{g_0})$ a vector ${v} \in \Pi$ which is related to $({f_0};{g_0})$ through equations \eqref{O44}, \eqref{O46}. In order to prove the following proposition, we make minor adjustments to the proof of \cite[Theorem~3.3]{borogovac.luger:2014}.

\begin{proposition}\label{O42}
Let $Q \in \mc N(\mc G)$ with the minimal realisation $(\Pi,A,\gamma)$ be given and let $\omega \in \bb R$. Suppose that $({f_0};{g_0})$ is a directional pair of $Q$ at $\omega$. Then there exists a vector ${v} \in \Pi$ such that 
\begin{align}
\label{O44}
\gamma {f_0} =\big(I+(\lambda_0-\omega)(A-\lambda_0)^{-1} \big){v}, \\
\label{O46}
{g_0} = Q(\lambda_0)^* {f_0} +(\omega-\overline{\lambda_0}) \gamma^+ {v}.
\end{align}
If  $({f_0};{g_0})$ has the  directional function  $f$ then such a vector $v$ can  be obtained  as the weak limit 
$$
\gamma (\lambda) f(\lambda)\rightharpoonup v
$$
as $\lambda \hat{\to} \omega$.
\end{proposition}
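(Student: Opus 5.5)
Let $f$ be a directional function for $(f_0;g_0)$. We show that $\gamma(\lambda)f(\lambda)$ is bounded as $\lambda\hat\to\omega$, extract a weak limit $v$, and pass to the limit in identities that relate $\gamma(\lambda)$ to $\gamma$ and $Q(\lambda)$ to $\gamma^+\gamma(\lambda)$.

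\emph{Step 1: boundedness.} By \eqref{O93} with $\mu=\lambda$,
\[
\big[\gamma(\lambda)f(\lambda),\gamma(\lambda)f(\lambda)\big]=\Big(\tfrac{Q(\lambda)-Q(\lambda)^*}{\lambda-\overline\lambda}f(\lambda),f(\lambda)\Big)_{\mc G},
\]
and this is bounded by \eqref{O37}. In a Hilbert space this bounds the norm directly. In a Pontryagin space we need more, and we use the standard device from \cite{borogovac.luger:2014}. Fix a fundamental decomposition $\Pi=\Pi_+[\dotplus]\Pi_-$ with $\dim\Pi_-=\kappa<\infty$. It suffices to bound the $\Pi_-$-components, since the $\Pi_+$-part is then controlled by the indefinite square plus the negative part.

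Because $\Pi_-$ is finite-dimensional and the realisation is minimal \eqref{O29}, we can choose finitely many vectors $\gamma(\mu_j)x_j$ that approximate a basis of $\Pi_-$ well enough to control the negative component. It therefore suffices to bound the inner products $[\gamma(\lambda)f(\lambda),\gamma(\mu_j)x_j]$ for fixed $\mu_j\in\rho(A)$, $x_j\in\mc G$. By \eqref{O93},
\[
\big[\gamma(\lambda)f(\lambda),\gamma(\mu_j)x_j\big]=\Big(\tfrac{Q(\lambda)f(\lambda)-Q(\mu_j)^*f(\lambda)}{\lambda-\overline{\mu_j}},x_j\Big).
\]
This stays bounded, indeed convergent, as $\lambda\hat\to\omega$, since $f(\lambda)\to f_0$, $Q(\lambda)f(\lambda)\to g_0$, and $\overline{\mu_j}\neq\omega$. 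Hence $\gamma(\lambda)f(\lambda)$ is bounded. This Pontryagin-space bound is the main obstacle; the rest is formal.

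\emph{Step 2: weak limit.} The same computation shows that $[\gamma(\lambda)f(\lambda),\gamma(\mu)x]$ converges for every $\mu\in\rho(A)$ and $x\in\mc G$. By \eqref{O29} these vectors span a dense set. Together with boundedness, this gives weak convergence of the whole net to a unique $v\in\Pi$, not only along subsequences. This proves the final assertion once the identities below are established.

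\emph{Step 3: the identities.} By \eqref{O62} with $\mu=\lambda_0$ we have $\gamma=(I+(\lambda_0-\lambda)(A-\lambda_0)^{-1})\gamma(\lambda)$. Applying this to $f(\lambda)$ gives
\[
\gamma f(\lambda)=\big(I+(\lambda_0-\lambda)(A-\lambda_0)^{-1}\big)\gamma(\lambda)f(\lambda).
\]
The left side converges in norm to $\gamma f_0$. The right side converges weakly to $(I+(\lambda_0-\omega)(A-\lambda_0)^{-1})v$, because bounded operators are weakly continuous and the scalar coefficient converges. This yields \eqref{O44}.

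By \eqref{O59}, $Q(\lambda)f(\lambda)=Q(\lambda_0)^*f(\lambda)+(\lambda-\overline{\lambda_0})\gamma^+\gamma(\lambda)f(\lambda)$. Here $\gamma^+$ maps into $\mc G$, and weak convergence in $\Pi$ carries over through the bounded map $\gamma^+$. Letting $\lambda\hat\to\omega$ gives $g_0=Q(\lambda_0)^*f_0+(\omega-\overline{\lambda_0})\gamma^+v$, which is \eqref{O46}. If $\mc G$ is infinite-dimensional this limit is weak, but it still identifies $g_0$ because $g_0$ is a norm limit.
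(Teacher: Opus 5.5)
Your proposal is correct and follows the paper's proof essentially step by step. It uses the same inner-product computations via \eqref{O93}, obtains weak convergence from convergence against the dense set of vectors $\gamma(\mu)x$, and passes to the limit in \eqref{O62} and \eqref{O59}. The only difference is in the boundedness step: the paper quotes the standard criterion that a sequence in a Pontryagin space converges weakly if and only if $[x_k,x_k]$ is bounded and $[x_k,u]$ converges for all $u$ in a set with dense span. Your Step 1 instead reproves the norm-boundedness part of that criterion by hand. That works, either by choosing a maximal negative subspace inside the span of the $\gamma(\mu)x$, or by absorbing the approximation error, which is proportional to $\|\gamma(\lambda)f(\lambda)\|$.
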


\begin{proof}
Recall that a sequence $(x_k)_{k \in \bb N}$ in the Pontryagin space $(\Pi,[.,.])$ converges weakly if and only if $([x_k,x_k])_{k \in \bb N}$ is bounded and $([x_k,u])_{k \in \bb N}$ is a Cauchy sequence for all $u$ in a subset whose linear span is dense in $\Pi$. 
We use this statement to prove weak convergence of $\gamma (\lambda) f(\lambda)$ for $\lambda \hat{\to} \omega$.

First note that
\begin{align*}
[\gamma (\lambda) f(\lambda),\gamma (\lambda) f(\lambda)]_{\Pi}=\big(K_Q(\lambda,\lambda)f(\lambda),f(\lambda) \big)_{\mc G}
\end{align*}
is bounded for $\lambda \hat{\to} \omega$. Second, for $\mu \in \rho (A) \cap \bb C^+$ and $h \in \mc G$ we have
\begin{align*}
[\gamma (\lambda) f(\lambda),\gamma (\mu)h]_{\Pi}=\bigg(\frac{Q(\lambda)-Q(\mu)^*}{\lambda-\overline{\mu}}f(\lambda),h \bigg)_{\mc G} \hspace{-4pt} \xrightarrow{\lambda \hat{\to} \omega} \frac{\big({g_0}-Q(\mu)^* {f_0} ,h \big)_{\mc G}}{\omega -\overline{\mu}}.
\end{align*}
Since $\Span \{\gamma(\mu)h  \,| \, \mu \in \rho (A) \setminus \{\omega \}, \, h \in \mc G \}$ is dense in $\Pi$, we indeed get that $\gamma (\lambda) f(\lambda)$ converges weakly to some element ${v} \in \Pi$ as $\lambda \hat{\to} \omega$. Using \eqref{O62} we have
\begin{align*}
\gamma {f_0} &=\gamma (\lambda_0){f_0}=\wlim_{\lambda \hat{\to} \omega}  \big(I+(\lambda_0 - \lambda)(A-\lambda_0)^{-1} \big)\gamma (\lambda)f(\lambda) \\
&=\big(I+(\lambda_0-\omega)(A-\lambda_0)^{-1} \big){v}.
\end{align*}
In order to see the additional statement we compute
\begin{align*}
{g_0} &=\wlim_{\lambda \hat{\to} \omega} Q(\lambda)f(\lambda)=\wlim_{\lambda \hat{\to} \omega} \big[Q(\lambda_0)^*+(\lambda-\overline{\lambda_0})\gamma^+ \gamma (\lambda) \big] f(\lambda) \\
&=Q(\lambda_0)^* {f_0}+(\omega - \overline{\lambda_0})\gamma ^+ {v},
\end{align*}
which finishes the proof.
\end{proof}

As one of the main results of this section we will show  that also the inverse of \Cref{O42} holds.  That is, $(f_0;g_0)$ is a directional pair precisely if there exists $v \in \Pi$ such that \eqref{O44} and \eqref{O46} hold true. In particular, this will imply that the sum of two directional pairs is again a directional pair.

\begin{remark}
\label{O39}
    In fact, if $f$ and $\tilde f$ are directional functions of $Q \in \mc N(\mc G)$ at $\omega \in \bb R$ then $f+\tilde f$ is again a directional function. Yet, due to the nonlinearity of the kernel condition \eqref{O37}, this statement is not trivial.
    At this point we are not yet able to prove it, but it will follow from later results. The idea is as follows: First the statement is proved for the special case  $Q \in \mc N_0(\mc G)$ in \Cref{O71}. For the general situation we find in \Cref{O25} that there always exists a M\"obius transform $\wt Q$ of $Q$ which does not have a generalised pole at $\omega$. The function $\wt Q$ can be written as the sum of a function in $\mc N_0(\mc G)$ and a function holomorphic at $\omega$, cf. \cite[Proposition~3.3]{daho.langer:1985}. Then the special case implies that the statement holds for the M\"obius transform $\wt Q$ and thus, by \Cref{O32}, also for the original function $Q$. 
\end{remark}

\subsection{The generalised value  $Q(\omega)$}

In order to capture better the behaviour of $Q$ at real points even outside $\mathfrak{h}(Q)$  we introduce one more object which, as we will see in \Cref{O33}, is closely related to directional pairs.

Note, if a generalised Nevanlinna function $Q$ has a minimal realisation with self-adjoint relation $A$ then the extended domain of holomorphy $\mathfrak{h}(Q)$   coincides with the resolvent set $\rho(A)$ and $Q$ 
is still well-defined via the operator representation \eqref{O28} in these points. However, even if $\omega\notin\rho(A)$ the inverse $(A-\omega)^{-1}$ is defined as a relation (with possibly small domain). This allows us to "evaluate $Q$ at $\omega$", which leads to the following definition.

\begin{definition}
\label{O15}
Let $Q \in \mc N(\mc G)$ and fix a minimal realisation $(\Pi,A,\gamma)$ of $Q$. 
For every $\omega \in \bb R$ define the linear relation $Q(\omega)\in\mc G\times \mc G$ by
\begin{align}
\label{O76}
Q(\omega) \DE Q(\lambda_0)^*+(\omega-\overline{\lambda_0})\gamma^+\big(I+(\omega-\lambda_0)(A-\omega)^{-1} \big) \gamma.
\end{align}
We refer to  $Q(\omega)$ as the \emph{partial generalised value} (or simply, \emph{generalised value}) of $Q$ at $\omega$.
\end{definition}

\begin{remark}
    At the moment it is not at all obvious that 
    the notion of $Q$ "having a generalised value at $\omega$" in \Cref{O31} fits with this notion of the actual object  "generalised value" for the relation $Q(\omega)$. However, \Cref{O33} will justify these notions.   
\end{remark}
We first show that $Q(\omega)$ is well{-}defined.

\begin{lemma}
\label{O91}
Let $Q \in \mc N(\mc G)$ and $\omega \in \bb R$. Then the linear relation   $Q(\omega)$ given by \Cref{O15} does not depend on the minimal realisation of $Q$. Moreover, $Q(\omega)$ is symmetric in $\mc G$.
\end{lemma}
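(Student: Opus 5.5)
Independence of the realisation will follow from the uniqueness of minimal realisations up to unitary equivalence; symmetry will follow from writing $Q(\omega)$ as a relation built from $\gamma$, $(A-\omega)^{-1}$ and $\gamma^+$, and checking the symmetry identity directly from the self-adjointness of $A$.

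\textbf{Step 1: reduction to a unitary map.} Let $(\Pi,A,\gamma)$ and $(\Pi',A',\gamma')$ be two minimal realisations of $Q$, with the same $\lambda_0$; the value $Q(\lambda_0)^*$ in \eqref{O76} is intrinsic to $Q$. By \eqref{O93}, $[\gamma(\lambda)x,\gamma(\mu)y]=[\gamma'(\lambda)x,\gamma'(\mu)y]$ for all $\lambda,\mu\in\rho(A)=\rho(A')=\mf h(Q)$. Hence the map $U\gamma(\lambda)x\DE\gamma'(\lambda)x$ extends by linearity to an isometry between the dense spans given by \eqref{O29}. Standard Pontryagin space arguments (isometric, densely defined, with dense range, and both spaces having the same negative index $\kappa$) make $U$ a bounded unitary operator $\Pi\to\Pi'$. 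From \eqref{O62} we get $U(A-\mu)^{-1}=(A'-\mu)^{-1}U$ for all $\mu\in\rho(A)$, so $UAU^{-1}=A'$ as relations, and also $U\gamma=\gamma'$ and $\gamma'^+U=\gamma^+$.

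\textbf{Step 2: transport of the relation.} The relation $(A-\omega)^{-1}=\{(y-\omega x;x)\DS (x;y)\in A\}$ is mapped by $U\times U$ onto $(A'-\omega)^{-1}$. Composing the relations in \eqref{O76} entrywise gives
\[
Q(\omega)=\Big\{\big(h;\,Q(\lambda_0)^*h+(\omega-\overline{\lambda_0})\gamma^+(\gamma h+(\omega-\lambda_0)w)\big) \DS (\gamma h;w)\in (A-\omega)^{-1}\Big\}.
\]
Applying $U$ and using the intertwining relations from Step 1 shows that this set is the same for the primed realisation.

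\textbf{Step 3: symmetry.} Take $(h;k),(h';k')\in Q(\omega)$ with corresponding $w,w'$. Then $(w;\gamma h+\omega w)\in A$, and similarly for $w'$. Write $u=\gamma h+(\omega-\lambda_0)w$, so that $k=Q(\lambda_0)^*h+(\omega-\overline{\lambda_0})\gamma^+u$. We need to show that $(k,h')-(h,k')$ is zero.
\begin{itemize}
\item The constant terms contribute $((Q(\lambda_0)^*-Q(\lambda_0))h,h')=-2i\,\IM\lambda_0\,[\gamma h,\gamma h']$, by \Cref{O21}.
\item The remaining terms contribute $(\omega-\overline{\lambda_0})[u,\gamma h']-(\omega-\lambda_0)[\gamma h,u']$.
\end{itemize}
Expanding $u$ and $u'$ leaves the terms in $[\gamma h,\gamma h']$, $[w,\gamma h']$, $[\gamma h,w']$ and $[w,w']$. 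Self-adjointness of $A$ gives $[\gamma h+\omega w,w']=[w,\gamma h'+\omega w']$, that is, $[\gamma h,w']=[w,\gamma h']$. This identity, together with the real coefficient $\omega$, should make everything cancel. I expect this bookkeeping to be the only place that needs care.

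\textbf{Main obstacle.} The delicate point is Step 1: the densely defined isometry must extend to a bounded unitary operator in the Pontryagin setting. This is the standard uniqueness result for minimal realisations (\cite{krein.langer:1973}), which I would cite rather than reprove.
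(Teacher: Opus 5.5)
Your proposal is correct. Steps 1--2 are the paper's own argument for independence. The paper also uses the isometry $\gamma(\lambda)x\mapsto\gamma'(\lambda)x$ on the dense spans and extends it to a unitary $U$ with $UA=A'U$, citing the Pontryagin-space extension result just as you do. From this it gets $U(A-\omega)^{-1}=(A'-\omega)^{-1}U$ and transports \eqref{O76} via $U\gamma=\gamma'$ and $\gamma'^+U=\gamma^+$.

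For symmetry your route differs. The paper only invokes the rule $T^*S^*\subseteq(ST)^*$, whereas you check $(k,h')=(h,k')$ by hand, using nothing but the symmetry of $A$. Your bookkeeping does close, and you should write it out rather than say it ``should'' cancel. Using $(\omega-\overline{\lambda_0})(\omega-\lambda_0)=|\omega-\lambda_0|^2$, the difference $(k,h')-(h,k')$ equals
\[
-(\lambda_0-\overline{\lambda_0})[\gamma h,\gamma h']+(\lambda_0-\overline{\lambda_0})[\gamma h,\gamma h']+|\omega-\lambda_0|^2\big([w,\gamma h']-[\gamma h,w']\big),
\]
and this vanishes by your identity $[\gamma h,w']=[w,\gamma h']$. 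Note that the expansion contains no $[w,w']$ term. That term appears only inside the symmetry identity for $A$, where it cancels because $\omega$ is real.

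What your computation buys is that it makes explicit a step the paper's one-liner passes over. The prefactor $\omega-\overline{\lambda_0}$ is not real, so the adjoint rule cannot be applied to \eqref{O76} as written. One first has to use $Q(\lambda_0)-Q(\lambda_0)^*=(\lambda_0-\overline{\lambda_0})\gamma^+\gamma$ to rewrite $Q(\omega)$ as the bounded self-adjoint operator $Q(\lambda_0)^*+(\omega-\overline{\lambda_0})\gamma^+\gamma$ plus $|\omega-\lambda_0|^2\gamma^+(A-\omega)^{-1}\gamma$. That rewriting is exactly your cancellation of the constant term. The paper's route is shorter once this rearrangement is granted.
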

\begin{proof}
Given two minimal realisations $(\Pi,A,\gamma)$ and $(\wt{\Pi},\wt A,\wt{\gamma})$ of $Q$, the assignment 
\begin{align*}
    \gamma (\lambda)x \mapsto \wt{\gamma}(\lambda)x, \qquad \lambda \in \rho (A)=\rho(\wt A), \quad x \in \mc G
\end{align*}
defines an isometric operator on the dense subspace $\Span \{\gamma (\lambda)x \DS \lambda \in \rho(A), \, x \in \mc G \}$ with range $\Span \{\wt{\gamma} (\lambda)x \DS \lambda \in \rho(\wt A), \, x \in \mc G \}$ \cite[p.~145]{langer.textorius:1977}. It can be extended to a unitary operator $U:\Pi \to \wt{\Pi}$ satisfying $UA=\wt A U$. Observe that this implies $U(A-\omega)^{-1}=(\wt A-\omega )^{-1}U$ in the sense of linear relations. We obtain
\begin{align*}
  &\gamma^+\big(I+(\omega-\lambda_0)(A-\omega)^{-1} \big) \gamma = (U\gamma)^+ U\big(I+(\omega-\lambda_0)(A-\omega)^{-1} \big) \gamma \\
  &= (U\gamma)^+ \big(I+(\omega-\lambda_0)(\wt A-\omega)^{-1} \big) U\gamma =\wt{\gamma}^+\big(I+(\omega-\lambda_0)(\wt A-\omega)^{-1} \big) \wt{\gamma}
\end{align*}
and thus \Cref{O15} defines $Q(\omega)$ independently of the chosen minimal realisation. It remains to note that $Q(\omega)$ is symmetric due to the computation rule $T^*S^* \subseteq (ST)^*$ for linear relations $S,T$.
\end{proof}

Note, if $\omega \in \rho (A)$ then  $Q$ has a real-analytic continuation to a neighborhood of $\omega$, whose value at $\omega$ coincides with $Q(\omega)$ (identifying operators with their graphs). In this case $\omega\in\mf h(Q)$. However, {even} {if $\omega \notin\rho(A)$}  the function $Q$ (restricted to the upper half plane $\mathbb C^+$)  might {still} have an analytic continuation $\wt Q$ to a neighborhood of $\omega$. In that case the graph of $\wt Q(\omega)$ is in general not symmetric, i.e., it does not coincide with $Q(\omega)$ as defined in Definition \ref{O15}.

\begin{example}
    For $Q_{{3}}$ from \Cref{ex:i} we have ${\rm ran}\gamma\cap{\rm ran} A =\{0\}$ and hence $Q_{{3}}(0)$ is trivial, i.e. $Q_{{3}}(0)=\{(0;0)\}$,  whereas $\lim\limits_{\lambda\hat\to0} Q_3(\lambda)={{3}}i$.
\end{example}

\begin{example}
    For the matrix function $Q_5$ from Examples \ref{ex:N-null-matrix} and \ref{ex:zero-pole} it can be shown that $Q_5(0)=\Span\left\{\left(\binom 01;\binom 00
    \right), \left(\binom 00;\binom 10
    \right)\right\}$. Here $Q_5(0)$ has a nontrivial multi-valued part, which reflects the pole at $\omega=0$.
\end{example}

\subsection{A characterisation of generalised values}

In what follows we connect the notions from the preceding definitions. 

\begin{theorem}
\label{O33}
Let $Q \in \mc N(\mc G)$ have the minimal realisation $(\Pi,A,\gamma)$ and let $\omega \in \bb R$. For ${f_0},{g_0} \in \mc G$ the following statements are equivalent.
\begin{itemize}
    \item[{\rm(A)}] The function $Q$ has a directional function $f$ at $\omega$, such that $$
    f(\lambda){\to}f_0 \quad \text{ and }\quad Q(\lambda)f(\lambda){\to}g_0\quad \text{ as } \lambda\widehat{\to}\omega.
    $$
    \item[{\rm(A')}] $({f_0};{g_0})$ is a directional pair of $Q$ at $\omega$.
    \item[{\rm(B)}] $({f_0};{g_0}) \in Q(\omega)$.
    \item[{\rm(C)}] There exists an element ${v} \in \Pi$ such that 
    \begin{align}\label{eq:f}
     \gamma {f_0} &=\big(I+(\lambda_0-\omega)(A-\lambda_0)^{-1} \big){v}, \\\label{eq:g}
    {g_0} &= Q(\lambda_0)^* {f_0} +(\omega-\overline{\lambda_0}) \gamma^+ {v}.   
    \end{align}
\end{itemize}
The element ${v}$ is uniquely determined by $({f_0};{g_0})$.
If $Q$ is strict  then, conversely, $({f_0};{g_0})$ is uniquely determined by ${v}$.
\end{theorem}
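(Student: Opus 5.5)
The equivalence (A)$\Leftrightarrow$(A') is just \Cref{O31}, and (A')$\Rightarrow$(C) is exactly \Cref{O42}. The plan is therefore to prove (B)$\Leftrightarrow$(C) by unwinding the relation calculus, and then to close the loop with (C)$\Rightarrow$(A) by constructing a directional function. The uniqueness statements are handled at the end.

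\textbf{(B)$\Leftrightarrow$(C).} By \eqref{O76}, $(f_0;g_0)\in Q(\omega)$ means there is $u\in\Pi$ with $(\gamma f_0;u)\in(A-\omega)^{-1}$, i.e.\ $(u;\omega u+\gamma f_0)\in A$, and
\[
g_0=Q(\lambda_0)^*f_0+(\omega-\overline{\lambda_0})\gamma^+\bigl(\gamma f_0+(\omega-\lambda_0)u\bigr).
\]
Put $v\DE\gamma f_0+(\omega-\lambda_0)u$. Then $(u;v)\in A-\lambda_0$, so $u=(A-\lambda_0)^{-1}v$, and $\gamma f_0=v-(\omega-\lambda_0)(A-\lambda_0)^{-1}v$. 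This is \eqref{eq:f}, and \eqref{eq:g} is the display above. Reading the argument backwards (set $u\DE(A-\lambda_0)^{-1}v$) gives (C)$\Rightarrow$(B).

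\textbf{Uniqueness.} Suppose $v,v'$ both satisfy (C) and put $x\DE v-v'$. From \eqref{eq:f}, $(I+(\lambda_0-\omega)(A-\lambda_0)^{-1})x=0$, i.e.\ $x\in\ker(A-\omega)$. From \eqref{eq:g}, $\gamma^+x=0$, since $\omega\neq\overline{\lambda_0}$. For $\mu\in\rho(A)$ we have $(A-\overline\mu)^{-1}x=x/(\omega-\overline\mu)$, hence
\[
\gamma(\mu)^+x=\frac{\omega-\overline{\lambda_0}}{\omega-\overline\mu}\,\gamma^+x=0 .
\]
So $x$ is orthogonal to the dense set in \eqref{O29}, and $x=0$. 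If $Q$ is strict, then by \eqref{O93} the intersection $\bigcap_\lambda\ker\gamma(\mu)^+\gamma(\lambda)$ is trivial, so $\ker\gamma=\{0\}$. Then $v$ determines $f_0$ through \eqref{eq:f}, and afterwards $g_0$ through \eqref{eq:g}.

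\textbf{(C)$\Rightarrow$(A).} This is the main step. First try the constant function $f\equiv f_0$. Using \eqref{eq:f} and the resolvent identity one gets
\[
\gamma(\lambda)f_0=\bigl(I+(\lambda-\omega)(A-\lambda)^{-1}\bigr)v .
\]
In the Hilbert space case, $(\lambda-\omega)(A-\lambda)^{-1}\to -P$ strongly as $\lambda\hat\to\omega$, where $P$ is the orthogonal projection onto $\ker(A-\omega)$. Hence $\gamma(\lambda)f_0\to v-Pv$ with bounded norm, which gives \eqref{O37}. It also gives
\[
Q(\lambda)f_0\to g_0-(\omega-\overline{\lambda_0})\gamma^+Pv .
\]
So the constant function fails only by the eigenvector component $x\DE Pv$; this is exactly the phenomenon seen for $Q_5$.

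To correct this, I would add a pole-cancellation function $f_1$ for the eigenvector $x$. This means $f_1(\lambda)\to0$, $\gamma(\lambda)f_1(\lambda)\rightharpoonup x$ with bounded norm, and $Q(\lambda)f_1(\lambda)\to(\omega-\overline{\lambda_0})\gamma^+x$. Such functions are built from eigenvectors of a minimal realisation in \cite[Theorem~3.3]{borogovac.luger:2014}, which I would adapt. Then $f\DE f_0+f_1$ has the required limits. Condition \eqref{O37} holds because it equals $[\gamma(\lambda)f,\gamma(\lambda)f]$, and this is bounded once $\gamma(\lambda)f(\lambda)$ is norm bounded.

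The real obstacle is the Pontryagin case. There the strong nontangential convergence of $(\lambda-\omega)(A-\lambda)^{-1}$ can fail because of nonsemisimple root subspaces at $\omega$. I would handle it by decomposing $\Pi$ into the finite-dimensional spectral subspace of $A$ at $\omega$ (if $\omega$ is an eigenvalue of nonpositive type) plus a Hilbert-space part, where the above argument applies. On the finite-dimensional part I would construct $f$ explicitly, with polynomial corrections in $\lambda-\omega$ through the Jordan chain. As a fallback, I would reduce to $\mc N_0$ plus a holomorphic term via the Möbius-transform route sketched in \Cref{O39}.
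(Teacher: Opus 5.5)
Your arguments for (A)$\Leftrightarrow$(A'), (A')$\Rightarrow$(C), (B)$\Leftrightarrow$(C) and both uniqueness statements are correct and essentially the paper's. The gap is in (C)$\Rightarrow$(A), which is the substance of the theorem.

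\textbf{The case $\kappa=0$.} Your splitting $v=(v-Pv)+Pv$ works, but only because you import a pole-cancellation function for the prescribed eigenvector $Pv$. That is exactly the case $f_0=0$ of the implication you are proving. The construction in Borogovac--Luger that you would adapt is itself a M\"obius-transformation argument.

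\textbf{The case $\kappa>0$.} Here your plan breaks down. It assumes that $\Pi$ splits into an $A$-invariant, finite-dimensional, nondegenerate subspace carrying the nonpositive root vectors at $\omega$, plus an $A$-invariant complement that is a Hilbert space near $\omega$. This is false when $\omega$ is a singular critical point, and nonsemisimplicity is not the only obstruction. Take $\Pi=\bb C\times L^2(0,1)\times\bb C$ with $[(a,h,b),(a',h',b')]=a\overline{b'}+b\overline{a'}+(h,h')$, and the bounded self-adjoint operator $A(a,h,b)=((h,1),\,th+b,\,0)$. It is a minimal representing operator, e.g.\ with $\gamma$ a bijection from a Hilbert space onto $\Pi$. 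Then:
\begin{itemize}
\item $\ker A$ is spanned by the neutral vector $(1,0,0)$;
\item there is no Jordan chain and no other eigenvalue, so no nondegenerate finite-dimensional invariant subspace contains this eigenvector;
\item $\lambda(A-\lambda)^{-1}(0,0,1)$ has $L^2$-component $1/(t-\lambda)$, which is unbounded as $\lambda\hat{\to}0$.
\end{itemize}
Neither ``constant plus pole-cancellation function'' nor polynomial corrections along a Jordan chain handles such a point. Producing a pole-cancellation function for a neutral eigenvector there is precisely the hard case.

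\textbf{The missing idea.} The paper never works at a critical point. It transforms the hypothesis, not just the directional function. \Cref{O75} shows, by an algebraic computation with the realisation from \Cref{O1}, that $(Q+\Theta)(\omega)=Q(\omega)+\Theta$ and $\wh Q(\omega)=-Q(\omega)^{-1}$. This is not obvious, since $Q(\omega)$ is not a limit. \Cref{O25} then takes $\Theta=rI+P$, with $r$ large and $P$ the orthogonal projection onto $\ker(\overline{Q(\omega)}+rI)$, so that $\ker(Q+\Theta)(\omega)=\{0\}$. By \Cref{le:eigenvalue}, $\omega$ is then not an eigenvalue of $\wh{A_\Theta}$, hence not a critical point. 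So \Cref{O36} and \Cref{O43} make the constant $-g_0-\Theta f_0$ a directional function of $\wh{Q+\Theta}$ with pair $(-g_0-\Theta f_0;f_0)$. \Cref{O32} pulls this back to the directional function $f(\lambda)=\wh{(Q+\Theta)}(\lambda)(-g_0-\Theta f_0)$ of $Q$.

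Your fallback names this route but omits both essential ingredients. \Cref{O32} only transports directional functions, i.e.\ the conclusion; without \Cref{O75} you cannot carry (C) for $Q$ over to the transformed function. Without \Cref{O25}, nothing removes the eigenvalue. The further reduction to ``$\mc N_0$ plus holomorphic'' is unnecessary once $\omega\notin\sigma_p(\wh{A_\Theta})$.
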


The equivalence of (A) and (A') is part of \Cref{O31} (and is  repeated {here} only for the sake of completeness).
Implication (A),(A') $\Rightarrow$ (C) is  given by \Cref{O42}. The remaining implications will be shown below.

\begin{remark}
    In the case of generalised poles and zeroes, cf., Remark \ref{re:earlier}, the element $v\in\Pi$ from Theorem \ref{O33} can be identified as an eigenvector of the representing relation of the function $Q$ or $\widehat Q$, respectively. This was proven e.g., in \cite{luger:2006}, but is also re-obtained  when combining Theorem \ref{O33} with Lemma \ref{le:eigenvalue} {below}. 
\end{remark}

\subsubsection{The equivalence of (B) and (C)}\label{BC}

This step in the proof of \Cref{O33} is a consequence of  the computation rules for linear relations. For the convenience of the reader we provide the details.

\begin{proof} 
We have $({f_0};{g_0}) \in Q(\omega)$ if and only if
\begin{align*}
    \Big(\gamma {f_0};\frac{{g_0}-Q(\lambda_0)^*{f_0}}{\omega-\overline{\lambda_0}} \Big) \in \gamma^+ \big(I+(\omega-\lambda_0)(A-\omega)^{-1} \big).
\end{align*}
This is further equivalent to the existence of ${v} \in \Pi$ with
\begin{align}
\label{O66}
    &\Big({v};\frac{{g_0}-Q(\lambda_0)^*{f_0}}{\omega-\overline{\lambda_0}} \Big) \in \gamma^+, \\[.5ex]
\label{O65}
    &(\gamma {f_0};{v}) \in I+(\omega-\lambda_0)(A-\omega)^{-1}.
\end{align}
Now \eqref{O66} is equivalent to \eqref{eq:g}. Since (in the sense of linear relations)
\begin{align}
    I+(\omega-\lambda_0)(A-\omega)^{-1}=\big(I+(\lambda_0-\omega)(A-\lambda_0)^{-1} \big)^{-1}
\end{align} 
it also follows that \eqref{O65} is equivalent to \eqref{eq:f}. 
\end{proof}

\subsubsection{The implication (B),(C) $\Rightarrow$ (A),(A')}

This remaining implication is the crucial one since it requires the construction of a directional function.
Our strategy here, inspired by \cite{borogovac.luger:2014}, will be the following: We first handle a simple case and thereafter apply M\"obius transformations in order to be able to use the result from the simple case.

To start with, the following fact, which has been used without reference in the proof of \cite[Theorem~3.5]{borogovac.luger:2014}, seems to be folklore. Since \Cref{O43} below also depends on it, we shall include a proof.

\begin{lemma}
\label{O36}
    If $A$ is a self-adjoint linear relation in a Pontryagin space $\Pi$ and $\omega \in \bb R$ is not an eigenvalue of $A$ then 
    \begin{align*}
        \lim_{\lambda \hat{\to} \omega} (\lambda-\omega)(A-\lambda)^{-1}x=0
    \end{align*}
    for all $x \in \Pi$.
\end{lemma}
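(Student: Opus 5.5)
The plan is to prove the Hilbert space case first and then handle the Pontryagin case through a finite-dimensional reduction.

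\textbf{Hilbert space case.} Suppose first that $\Pi$ is a Hilbert space and that $A$ is a self-adjoint relation. We split $\Pi = \overline{\dom A} \oplus \mul A$. On $\mul A$ the resolvent $(A-\lambda)^{-1}$ vanishes, so it suffices to treat the operator part $A_0$ of $A$ in $\overline{\dom A}$. By the spectral theorem, with $E$ the spectral measure of $A_0$,
\begin{align*}
\|(\lambda-\omega)(A_0-\lambda)^{-1}x\|^2=\int_{\bb R}\frac{|\lambda-\omega|^2}{|t-\lambda|^2}\,d(E(t)x,x).
\end{align*}
For $\lambda$ in a sector $W_{\omega,\theta}$ we have $|\lambda-\omega|\le C_\theta|\IM\lambda|\le C_\theta|t-\lambda|$, so the integrand is bounded by $C_\theta^2$. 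The integrand tends to $0$ pointwise for $t\neq\omega$. Since $E(\{\omega\})=0$ (because $\omega$ is not an eigenvalue), dominated convergence gives the limit $0$.

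\textbf{Reduction of the Pontryagin case.} For a general Pontryagin space, we want to split off the nonpositive part. A self-adjoint relation in a Pontryagin space has finitely many nonreal eigenvalues and a finite-dimensional "critical" structure. The cleanest route is to choose a finite-dimensional $A$-invariant (in the relation sense) subspace $\mc L$ on which $A$ is an operator, whose spectral data are contained in the nonreal spectrum together with the critical real points. It may include a neutral part at real points, and in particular at $\omega$ if needed. Concretely, one writes $\Pi=\mc L\dotplus\mc L^{[\perp]}$ with a nondegenerate decomposition.

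Alternatively one uses the standard fact, due to Langer, that there exists a $\kappa$-dimensional nonpositive invariant subspace $\mc L$ of $A$. Better still, one uses a decomposition $\Pi=\Pi_1[\dotplus]\Pi_2$ into $A$-reducing subspaces, where $\Pi_1$ is finite-dimensional and $\Pi_2$ is such that $A|_{\Pi_2}$ is definitizable with no critical points except possibly $\omega$. The latter is still awkward, since $\omega$ may be a critical point.

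\textbf{Main obstacle: a critical point at $\omega$.} So the main obstacle is exactly this, and I would handle it differently. Take $\mc L$ to be the span of the root subspaces of $A$ at nonreal eigenvalues together with the algebraic eigenspace at $\omega$ (the latter consists of generalised eigenvectors only, since $\omega$ is not an eigenvalue, so it is trivial!). Hence the only remaining critical issue would be that $\omega$ is a critical point without eigenvectors.

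To deal with this, use that $A$ is definitizable: there is a real polynomial $p$ with $[p(A)x,x]\ge0$. One may take $p(t)=q(t)\overline{q}(t)$ with $\deg q\le\kappa$. Then the spectral function $E$ of $A$ exists on intervals whose endpoints are not critical points, and near $\omega$ one has the representation
\begin{align*}
(A-\lambda)^{-1}=\text{(holomorphic at }\omega)+\int\frac{dF(t)}{p(t)(t-\lambda)}
\end{align*}
with $F$ a positive operator measure. Combined with $(\lambda-\omega)$ and the finitely many polynomial correction terms, one proceeds as follows. The terms of the form $(\lambda-\omega)R(\lambda)x$ with $R$ a resolvent-type rational expression have at most a pole of order $1$ at $\omega$ in the nonreal directions, and the first-order residue would produce an eigenvector at $\omega$. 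Hence this residue is zero, and dominated convergence handles the integral part as in the Hilbert case.

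\textbf{Alternative via the Hilbert case directly.} As a simpler alternative, I would first try the following. Pick $\mu\in\rho(A)$ nonreal. By Pontryagin space theory, $\Pi$ admits a fundamental decomposition such that $A$ is a finite-rank perturbation (in resolvent sense) of a self-adjoint relation $A'$ that is self-adjoint with respect to a Hilbert inner product, and for which $\omega$ is not an eigenvalue. The Hilbert case applies to $A'$. The difference of resolvents has the form $(A-\lambda)^{-1}-(A'-\lambda)^{-1}=\Gamma(\lambda)M(\lambda)\Gamma(\overline\lambda)^+$ with a finite-dimensional matrix function $M$ that is generalised Nevanlinna. Its behaviour $(\lambda-\omega)M(\lambda)\to$ residue is controlled, and the nonexistence of an eigenvector forces the limit $0$.

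The expected obstacle in this route is the same: proving that a nonzero limit would yield an eigenvector. For this, note that if $y(\lambda)=(\lambda-\omega)(A-\lambda)^{-1}x$ is bounded in the sector, then any weak limit point $y$ satisfies $(x';\,\cdot)$-type relations giving $(y;\omega y)\in A$, so $y=0$. Here $(\lambda-\omega)x$ tends to $0$. Boundedness would come from definitizability, and the upgrade from weak to strong convergence from the Hilbert part together with finite dimensionality of the rest.
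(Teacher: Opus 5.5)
You have the Hilbert space case right, and your last paragraph is also right on one point: once $y(\lambda)=(\lambda-\omega)(A-\lambda)^{-1}x$ is known to stay bounded in a sector, every weak limit point $y$ satisfies $(y;\omega y)\in A$ and therefore vanishes. The gap is the case you single out yourself, ``$\omega$ a critical point without eigenvectors''. You never exclude it, and your analytic treatment of it does not work.

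In your representation $\omega$ can only be critical if $p(\omega)=0$. Then nothing guarantees that $(dF(t)x,x)/p(t)$ is a finite measure near $\omega$, so there is no integrable majorant and dominated convergence ``as in the Hilbert case'' is not available. Controlling this mass near $\omega$ is exactly the question whether $\omega$ is a singular critical point, so the argument is circular. The claim that the rational terms have at most a simple pole whose residue would be an eigenvector is also unsupported. The splitting into rational and integral parts is not canonical, and nothing in your argument pins down the pole order at $\omega$. For the same reason, ``boundedness would come from definitizability'' is not justified: definitizability by itself only gives a polynomial resolvent bound of possibly higher order near zeros of $p$, not the first-order bound you need.

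Your alternative route has the same hole. The Hilbert-self-adjoint $A'$ without eigenvalue at $\omega$ is asserted, not constructed. Moreover, $(\lambda-\omega)M(\lambda)$ need not converge, since a generalised Nevanlinna $M$ can have poles of higher order at real points. Everything would then hinge on an unanalysed cancellation with $\Gamma(\lambda)$.

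The missing idea is a Pontryagin-specific fact, and you already have its key ingredient. The definitizing polynomial can be chosen as $p=q\overline q$, which is nonnegative on $\bb R$. By Langer's result, a critical point at which a definitizing polynomial does not change sign must be an eigenvalue. Hence $\omega$ is \emph{not a critical point at all}. This gives an open interval $\Delta\ni\omega$ such that $\ran E(\Delta)$ is a positive subspace, i.e.\ a Hilbert space. One then has $\Pi=K_0\,[\dotplus]\,\ran E(\bb R\setminus\Delta)\,[\dotplus]\,\ran E(\Delta)$, where $K_0$ is the Riesz subspace of the nonreal spectrum and all three components are $A$-invariant. On the first two components $\omega$ lies in the resolvent set of the restriction, so $(\lambda-\omega)(A-\lambda)^{-1}\to 0$ there even in norm. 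On the third component your Hilbert space argument applies; this is the paper's proof.

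Note also that the complement of the Hilbert piece is in general infinite-dimensional, not finite-dimensional as your last sentence suggests. What matters is only that $\omega$ lies in the resolvent set of $A$ restricted to it. With this decomposition, the boundedness and the weak-to-strong upgrade in your final paragraph become immediate; without it, neither is established.
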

\begin{proof}
We assume without loss of generality that $A$ is an operator (as the multi-valued part of $A$ has no influence on the validity of the assertion).
Let $E$ be the local spectral function of $A$, and let  $K_0$ be the Riesz spectral subspace corresponding to the nonreal spectrum of $A$, cf. \cite{langer:1982}. Since $\omega$ is not an eigenvalue of $A$ it is not a critical point (this is due to \cite[Proposition~5.4]{langer:1982} and the fact that $A$ has a definitizing polynomial that is nonnegative on $\bb R$, cf. \cite[p.~12]{langer:1982}). Hence there exists an open interval $\Delta$ around $\omega$ such that $\ran E(\Delta)$ is a positive subspace of $\Pi$. Then $\Pi$ decomposes into the orthogonal direct sum
\begin{align*}
    \Pi=K_0 \, [\dotplus] \, \ran E(\bb R \setminus \Delta) \, [\dotplus] \, \ran E(\Delta),
\end{align*}
where each of the three components on the right-hand side is invariant under $A$. The point $\omega$ {belongs} to {n}either  $\sigma (A|_{K_0})$ {n}or $\sigma (A|_{\ran E(\bb R \setminus \Delta )})$, and thus the restrictions of $(\lambda-\omega)(A-\lambda)^{-1}$ to $K_0$ and $\ran E(\bb R \setminus \Delta )$ converge to zero (in norm) as $\lambda \hat{\to} \omega$. Since $\ran E(\Delta)$ is a Hilbert space, $(\lambda-\omega)(A-\lambda)^{-1}|_{\ran E(\Delta)}$ converges strongly as $\lambda \hat{\to} \omega$ to the orthogonal projection onto the eigenspace of $A|_{\ran E(\Delta)}$ at $\omega$ which, by assumption, is trivial.
\end{proof}

As in \cite{borogovac.luger:2014} the construction of a directional function is particularly simple for a point which is not  a generalised pole. In this case the following result gives a partial converse of \Cref{O42}, i.e., a special case of the implication (C) $\Rightarrow$ (A).

\begin{lemma}
\label{O43}
Let $Q \in \mc N(\mc G)$ have the minimal realisation $(\Pi,A,\gamma)$. Assume that $\omega \in \bb R$ is not an eigenvalue of $A$. If ${v} \in \Pi$ and ${f_0} \in \mc G$ are such that  
\begin{align*}
\gamma {f_0} =\big(I+(\lambda_0-\omega)(A-\lambda_0)^{-1} \big){v}
\end{align*}
then the constant function  $f(\lambda):={f_0}$ is a directional function of $Q$ at $\omega$ and
\begin{align*}
\lim_{\lambda \hat{\to} \omega} Q(\lambda){f_0} = Q(\lambda_0)^* {f_0} +(\omega-\overline{\lambda_0}) \gamma^+ {v}.
\end{align*}
\end{lemma}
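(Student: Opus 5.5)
The plan is to express $\gamma(\lambda)f_0$ in terms of $v$ and then apply \Cref{O36}. The first step is to move from $\lambda_0$ to $\lambda$ using \eqref{O62}. Since $\gamma f_0=\gamma(\lambda_0)f_0$, we have
\begin{align*}
\gamma(\lambda)f_0=\big(I+(\lambda-\lambda_0)(A-\lambda)^{-1}\big)\big(I+(\lambda_0-\omega)(A-\lambda_0)^{-1}\big)v .
\end{align*}
Applying the resolvent identity $(A-\lambda)^{-1}(A-\lambda_0)^{-1}=\frac{1}{\lambda-\lambda_0}\big((A-\lambda)^{-1}-(A-\lambda_0)^{-1}\big)$, the $(A-\lambda_0)^{-1}$ terms cancel. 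This should leave the identity
\begin{align*}
\gamma(\lambda)f_0=\big(I+(\lambda-\omega)(A-\lambda)^{-1}\big)v,
\end{align*}
which is formally \eqref{O62} with $\mu=\lambda$ and ``$\gamma(\omega)f_0=v$''. Checking this algebra carefully, with $A$ possibly a relation, is a routine but necessary step. I expect it to be the main technical point, since everything else is a direct limit argument.

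Given the identity, \Cref{O36} applies because $\omega$ is not an eigenvalue of $A$. It yields $(\lambda-\omega)(A-\lambda)^{-1}v\to0$ in norm as $\lambda\hat\to\omega$, hence $\gamma(\lambda)f_0\to v$ strongly.

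The remaining conditions of \Cref{O31} then follow. For the kernel condition \eqref{O37}, we use \eqref{O93}:
\begin{align*}
\big(K_Q(\lambda,\lambda)f_0,f_0\big)=[\gamma(\lambda)f_0,\gamma(\lambda)f_0]\to[v,v],
\end{align*}
so this quantity is bounded; boundedness only needs weak convergence together with boundedness of norms, but here we even have norm convergence. For the limit of $Q$, we use \eqref{O59}:
\begin{align*}
Q(\lambda)f_0=Q(\lambda_0)^*f_0+(\lambda-\overline{\lambda_0})\gamma^+\gamma(\lambda)f_0\to Q(\lambda_0)^*f_0+(\omega-\overline{\lambda_0})\gamma^+v,
\end{align*}
since $\gamma^+$ is bounded. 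The limit of $f$ itself is trivial because $f\equiv f_0$ is constant. Hence $f\equiv f_0$ is a directional function with the stated pair, which completes the plan.
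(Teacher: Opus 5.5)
Your proof is correct and follows the paper's argument essentially step for step. You derive $\gamma(\lambda)f_0=\big(I+(\lambda-\omega)(A-\lambda)^{-1}\big)v$, use \Cref{O36} to get $\gamma(\lambda)f_0\to v$ in norm, and read off both limits via \eqref{O59} and \eqref{O93}. The algebraic step you single out as the main technical point is harmless: for $\lambda,\lambda_0\in\rho(A)$ both resolvents are bounded, everywhere defined operators even when $A$ is a relation, so the resolvent identity applies directly (the paper simply states the resulting identity).
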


\begin{proof}
We first note that the assumption on $v$ and $f_0$ implies
\begin{align*}
\gamma (\lambda) {f_0} = \big(I+(\lambda-\lambda_0)(A-\lambda)^{-1} \big)\gamma {f_0}=\big(I+(\lambda-\omega)(A-\lambda)^{-1} \big){v}
\end{align*}
for all $\lambda \in \rho (A)$. Since $\omega$ is not an eigenvalue of $A$, \Cref{O36} shows that $(\lambda - \omega) (A-\lambda)^{-1}$ converges strongly to zero as $\lambda \hat{\to} \omega$ and hence
\begin{align*}
&\gamma (\lambda) {f_0} \xrightarrow{\lambda \hat{\to} \omega} {v}. 
\end{align*}
With this we obtain
\begin{align*}
Q(\lambda){f_0}=Q(\lambda_0)^* {f_0}+(\lambda-\overline{\lambda_0})\gamma^+ \gamma (\lambda) {f_0} \xrightarrow{\lambda \hat{\to} \omega} Q(\lambda_0)^* {f_0}+(\omega-\overline{\lambda_0})\gamma^+ {v}.
\end{align*}
and
\begin{align*}
& \big(K_Q(\lambda,\lambda){f_0},{f_0} \big)_{\mc G}=[\gamma(\lambda){f_0},\gamma (\lambda){f_0}]_{\Pi}\xrightarrow{\lambda \hat{\to} \omega} [{v},{v}]_{\Pi}
\end{align*}
and thus $f(\lambda)\equiv f_0$ is a directional function of $Q$ at $\omega$ with generalised value given by \eqref{eq:g}.
\end{proof}
However, we know that generally speaking, constant directional functions do not suffice. Our strategy is now to apply a Möbius transformation to the function $Q$ such that the new function will satisfy the assumptions in \Cref{O43}, where we have to keep track of the transformations of all other objects, e.g., the corresponding directional pairs.

To start with we have a closer look at the crucial assumption in Lemma \ref{O43}. It means that $\omega\in\mathbb R$  is not a generalised pole of $Q$. Characterisations of generalised poles of various flavours can e.g., be found in \cite{borogovac.langer:1988,luger:2006,borogovac.luger:2014}. For the convenience of the reader we give a particular instance of such that fits our needs in this place. 

\begin{lemma}
\label{le:eigenvalue}
Let $Q \in \mc N(\mc G)$ have the minimal realisation $(\Pi,A,\gamma)$, and let $\omega \in \bb R$ and ${g_0} \in \mc G$. Then the following are equivalent:
\begin{itemize}
    \item[{\rm(i)}] $\omega$ is an eigenvalue of $A$ with  eigenvector ${v}$  such that ${g_0}=(\omega-\overline{\lambda_0})\gamma^+ {v}$.
\item[\rm{(ii)}] $(0;{g_0}) \in Q(\omega)$ and ${g_0} \neq 0$.

\end{itemize}
\end{lemma}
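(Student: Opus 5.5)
The plan is to reduce both statements to the characterisation of $Q(\omega)$ given by the equivalence (B)$\Leftrightarrow$(C) in \Cref{O33}, which was proved in \Cref{BC} purely by the calculus of linear relations. By that equivalence, $(0;g_0)\in Q(\omega)$ holds if and only if there is $v\in\Pi$ with
\begin{align*}
0=\gamma\cdot 0 &=\big(I+(\lambda_0-\omega)(A-\lambda_0)^{-1}\big)v, \\
g_0&=Q(\lambda_0)^*\cdot 0+(\omega-\overline{\lambda_0})\gamma^+v=(\omega-\overline{\lambda_0})\gamma^+v .
\end{align*}
The second line is already the relation between $g_0$ and $v$ required in (i). So it only remains to show that the first line says exactly that $v\in\ker(A-\omega)$.

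For this I will use the identity of linear relations
\begin{align*}
\ker\big(I+(\lambda_0-\omega)(A-\lambda_0)^{-1}\big)=\ker(A-\omega).
\end{align*}
To prove it, write $u=(A-\lambda_0)^{-1}v$, so that $(u;v+\lambda_0u)\in A$. The equation $v+(\lambda_0-\omega)u=0$ then gives $v=(\omega-\lambda_0)u$, hence $(u;\omega u)\in A$ and $v$ is a multiple of $u$. If $\omega\ne\lambda_0$, which holds since $\lambda_0\in\bb C^+$, then $v=(\omega-\lambda_0)u$ with $u\in\ker(A-\omega)$, so $v\in\ker(A-\omega)$. Conversely, if $(v;\omega v)\in A$, then $(A-\lambda_0)^{-1}v=(\omega-\lambda_0)^{-1}v$, and the displayed operator annihilates $v$. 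Note that $(A-\lambda_0)^{-1}$ is an everywhere defined operator because $\lambda_0\in\rho(A)$, so there is no ambiguity in these manipulations.

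With this identity both implications follow. For (i)$\Rightarrow$(ii): the eigenvector $v$ satisfies both lines above, so $(0;g_0)\in Q(\omega)$. We must also check $g_0\ne0$, and this is where minimality enters. If $\gamma^+v=0$, then for all $\lambda\in\rho(A)$ and $x\in\mc G$ we get
\begin{align*}
[v,\gamma(\lambda)x]=\Big[\big(I+(\overline\lambda-\overline{\lambda_0})(A-\overline\lambda)^{-1}\big)v,\gamma x\Big]=\frac{\omega-\overline{\lambda_0}}{\omega-\overline\lambda}\,[v,\gamma x]=0 .
\end{align*}
By \eqref{O29} this forces $v=0$, contradicting that $v$ is an eigenvector. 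Since $\omega-\overline{\lambda_0}\neq0$, we conclude $g_0\ne0$.

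For (ii)$\Rightarrow$(i): (C) provides $v\in\ker(A-\omega)$ with $g_0=(\omega-\overline{\lambda_0})\gamma^+v$. Since $g_0\ne0$, we have $v\ne0$, so $\omega$ is an eigenvalue of $A$ with eigenvector $v$.

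The main point needing care is the nondegeneracy argument $\gamma^+v\ne0$. It requires the adjoint computation of $(A-\overline\lambda)^{-1}$ acting on an eigenvector in the Pontryagin inner product, together with the minimality condition \eqref{O29}. Everything else is routine relation calculus.
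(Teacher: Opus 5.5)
Your proof is correct and follows the paper's argument essentially verbatim: both directions go through the (B)$\Leftrightarrow$(C) characterisation from Subsection~\ref{BC}, and $g_0\neq0$ is obtained exactly as in the paper by computing $\gamma(\lambda)^+v=\frac{\omega-\overline{\lambda_0}}{\omega-\overline{\lambda}}\gamma^+v$ for an eigenvector $v$ and invoking minimality \eqref{O29}. The only addition is your explicit verification that $\ker\big(I+(\lambda_0-\omega)(A-\lambda_0)^{-1}\big)=\ker(A-\omega)$, which the paper uses without comment.
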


\begin{proof}
If  $(0;{g_0}) \in Q(\omega)$  then by the equivalence (B) $\Leftrightarrow$ (C) in Subsection \ref{BC}, there is an element ${v} \in \Pi$ such that
\begin{align}
\label{O68}
0&=\big(I+(\lambda_0-\omega)(A-\lambda_0)^{-1} \big){v}, \\
\label{O69}
{g_0} &=(\omega-\overline{\lambda_0}) \gamma^+ {v}.
\end{align}

If $g_0 \neq 0$, we infer from \eqref{O69} {that} ${v} \neq 0$, and \eqref{O68} shows that ${v}$ is an eigenvector of $A$.

Conversely, assume that ${v}$ is an eigenvalue of $A$. Defining ${g_0}$ by \eqref{O69}, again equivalence (B) $\Leftrightarrow$ (C)  yields $(0;{g_0}) \in Q(\omega)$. We have to show that ${g_0} \neq 0$. Assume the contrary, that is, $(\gamma^+{v},u)=0$ for all $u \in \mc G$. Since ${v}$ is an eigenvector of $A$ we have 
\[
\gamma(\lambda)^+ {v}=\gamma^+ \big(I+(\overline{\lambda}-\overline{\lambda_0})(A-\overline{\lambda})^{-1} \big){v}=\frac{\omega-\overline{\lambda_0}}{\omega-\overline{\lambda}}\gamma^+ {v}, \quad \lambda \in \rho (A).
\]
Hence 
\[
[{v},\gamma(\lambda)u ]_{\Pi}=(\gamma (\lambda)^+{v},u)_{\mathcal G}=0, \quad u \in \mc G, \, \lambda \in \rho (A).
\]
Using the minimality condition \eqref{O29} we find that ${v}=0$, contradicting our assumption that ${v}$ is an eigenvector of $A$.
\end{proof}

Next, observe how basic linear fractional transformations affect the corresponding directional functions.
\begin{lemma}
\label{O32}
Let $Q \in \mc N(\mc G)$ and $\omega \in \bb R$. Suppose that $f$ is a directional function of $Q$ at $\omega$ with directional pair $({f_0};{g_0})$. The following statements hold.
\begin{Enumerate}
\item If $\Theta \in \mc B(\mc G)$ is self-adjoint, then $f$ is also a directional function of $Q+\Theta$ at $\omega$ with directional pair $({f_0};\Theta {f_0}+{g_0})$.
\item If $Q$ is regular, then $\hat f(\lambda) \DE Q(\lambda)f(\lambda)$ is a directional function of $\wh Q$ at $\omega$ with directional pair $({g_0};-{f_0})$.
\end{Enumerate}
\end{lemma}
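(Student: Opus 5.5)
The plan is to verify the three defining requirements of \Cref{O31} directly for the new function: the nontangential limit of the candidate function, the nontangential limit of the function applied to the candidate, and boundedness of the kernel expression \eqref{O37}. Both parts reduce to simple algebraic identities between kernels.

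For (i), set $\wt Q \DE Q+\Theta$. Then $\wt Q\in\mc N(\mc G)$: adding a bounded self-adjoint constant does not change the kernel \eqref{O56}, and $\mf h(\wt Q)=\mf h(Q)$. The three requirements for $f$ then follow in turn.
\begin{itemize}
\item The limit of $f$ is still $f_0$.
\item Next, $\wt Q(\lambda)f(\lambda)=\Theta f(\lambda)+Q(\lambda)f(\lambda)\to\Theta f_0+g_0$ by boundedness of $\Theta$.
\item Since $\Theta$ is self-adjoint, $\frac{\wt Q(\lambda)-\wt Q(\lambda)^*}{\lambda-\overline\lambda}=\frac{Q(\lambda)-Q(\lambda)^*}{\lambda-\overline\lambda}$, so \eqref{O37} is literally unchanged.
\end{itemize}
Hence $f$ is a directional function of $Q+\Theta$ with pair $(f_0;\Theta f_0+g_0)$.

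For (ii), we already know from \eqref{O61} that $\wh Q\in\mc N(\mc G)$. Since $Q$ is regular and meromorphic, $Q(\lambda)$ is boundedly invertible off a discrete set in $\mc U_\omega\cap\bb C^+$, so $\wh Q$ is holomorphic there.
\begin{itemize}
\item First, $\hat f=Qf$ is analytic on $\mc U_\omega\cap\mf h(Q)\cap\mf h(\wh Q)\cap\bb C^+$, and $\hat f(\lambda)\to g_0$.
\item Second, $\wh Q(\lambda)\hat f(\lambda)=-Q(\lambda)^{-1}Q(\lambda)f(\lambda)=-f(\lambda)\to -f_0$.
\item For the kernel, take $\mu=\lambda$ in \eqref{O54} with $\xi_\lambda=\xi_\mu=\hat f(\lambda)$. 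Since $\wh Q(\lambda)\hat f(\lambda)=-f(\lambda)$, this gives
\[
\big(K_{\wh Q}(\lambda,\lambda)\hat f(\lambda),\hat f(\lambda)\big)=\big(K_Q(\lambda,\lambda)f(\lambda),f(\lambda)\big),
\]
which is bounded as $\lambda\hat\to\omega$ by assumption.
\end{itemize}
Thus $(g_0;-f_0)$ is a directional pair of $\wh Q$.

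The main obstacle is a domain technicality in (ii). The directional function is required to be analytic on $\mc U_\omega\cap\mf h(\wh Q)\cap\bb C^+$, which can be larger than $\mf h(Q)$: at poles of $Q$ in $\bb C^+$, the function $\wh Q$ may be holomorphic while $Qf$ is not a priori defined. At such a point $\lambda_1$, $\hat f=-\wh Q^{-1}(-f)$ locally. I would handle this by noting that $\hat f$ is meromorphic and, away from the discrete exceptional set, satisfies $\hat f=-(\wh Q)^{-1}\cdots$; alternatively one can shrink the set on which limits are taken. Nontangential limits at $\omega$ only involve points approaching $\omega$, and the isolated nonreal singularities accumulate nowhere in $\bb C^+$. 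So I would argue that $\hat f$ extends analytically across removable points, or else interpret \Cref{O31} up to a discrete set of nonreal points. Checking that this does not affect the conclusion is the only nontrivial point.
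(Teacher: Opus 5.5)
Your proof is correct and matches the paper's argument: (i) is immediate because a self-adjoint constant cancels in the kernel, and (ii) follows from \eqref{O54} with $\xi_\lambda=\xi_\mu=\hat f(\lambda)$ together with $\wh Q(\lambda)\hat f(\lambda)=-f(\lambda)$. The domain technicality you raise, which the paper passes over, has a simpler resolution than the one you sketch: by \Cref{O1}, $Q$ and $\wh Q$ have minimal realisations in a Pontryagin space, so each has only finitely many nonreal singularities; since $\mc U_\omega$ in \Cref{O31} is arbitrary, you may therefore shrink it until $\mc U_\omega\cap\bb C^+\subseteq\mf h(Q)\cap\mf h(\wh Q)$ (discreteness in $\bb C^+$ alone would not suffice, as it does not rule out accumulation at $\omega$).
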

\begin{proof}
Item (i) is trivial and item (ii) follows from \eqref{O54}.
\end{proof}
Next we check that also the relation $Q(\omega)$ behaves as expected with respect to M\"obius transformations.  Note that this very natural statement is not at all obvious, since $Q(\omega)$ cannot be interpreted as a limit.

\begin{lemma}
\label{O75}
    Let $Q \in \mc N(\mc G)$  and $\omega \in \bb R$. Then the following statements hold.
    
    \begin{Enumerate}
        \item If $\Theta \in \mc B(\mc G)$ is a self-adjoint constant then
        \begin{align*}
           (Q+\Theta)(\omega)=Q(\omega)+\Theta,
        \end{align*}
        
        where $Q(\omega)+\Theta=\{({f_0};g_0+\Theta f_0) \DS ({f_0};{g_0}) \in Q(\omega) \}$ and  $(Q+\Theta)(\omega)$ is defined as in \Cref{O15} for the function $Q+\Theta$ $\in \mc N(\mc G)$.
        
        \item  If $Q$ is regular then
        \begin{align*}
        \wh Q(\omega)=-Q(\omega)^{-1},
    \end{align*}
    
    where $-Q(\omega)^{-1}= \{(-{g_0};{f_0}) \DS ({f_0};{g_0}) \in Q(\omega) \}$ and $\wh Q(\omega)$ is defined as in \Cref{O15} for the function $\wh Q\in \mc N(\mc G)$.
    \end{Enumerate}
\end{lemma}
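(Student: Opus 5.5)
For (i) I would use a suitable realisation of $Q+\Theta$. If $(\Pi,A,\gamma)$ is a minimal realisation of $Q$, then $(\Pi,A,\gamma)$ is also a minimal realisation of $Q+\Theta$. Indeed, \eqref{O28} is unchanged except that the constant $Q(\lambda_0)^*$ is replaced by $Q(\lambda_0)^*+\Theta=(Q+\Theta)(\lambda_0)^*$, since $\Theta$ is self-adjoint, and the minimality condition \eqref{O29} does not involve the constant at all. By \Cref{O91} we may compute $(Q+\Theta)(\omega)$ from this realisation, and \eqref{O76} gives
\[
(Q+\Theta)(\omega)=\Theta+Q(\lambda_0)^*+(\omega-\overline{\lambda_0})\gamma^+\big(I+(\omega-\lambda_0)(A-\omega)^{-1}\big)\gamma .
\]
Adding a bounded everywhere defined operator to a relation acts as $(f_0;g_0)\mapsto(f_0;g_0+\Theta f_0)$, so this is exactly $Q(\omega)+\Theta$. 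Equivalently, one can check directly in terms of (C) that $v$ stays the same and \eqref{eq:g} shifts by $\Theta f_0$.

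For (ii) I would work with characterisation (C) of the subsection on the equivalence of (B) and (C) together with the realisation $(\Pi,\wh A,\hat\gamma)$ of $\wh Q$ from \Cref{O1}. We may choose $\lambda_0$ so that $0\in\rho(Q(\lambda_0))$, and \Cref{O91} again allows us to compute $\wh Q(\omega)$ in this realisation. Let $(f_0;g_0)\in Q(\omega)$ with vector $v$ from (C). I would show that $(g_0;-f_0)$ satisfies (C) for $\wh Q$ with the same vector $v$, that is,
\[
\hat\gamma g_0=\big(I+(\lambda_0-\omega)(\wh A-\lambda_0)^{-1}\big)v,\qquad -f_0=\wh Q(\lambda_0)^*g_0+(\omega-\overline{\lambda_0})\hat\gamma^+v .
\]
For this, substitute \eqref{O24} for $(\wh A-\lambda_0)^{-1}$, use $\hat\gamma=\gamma\wh Q(\lambda_0)$, $\hat\gamma^+=\wh Q(\lambda_0)^*\gamma^+$, $\wh Q(\lambda_0)^*=-Q(\lambda_0)^{*-1}$, and $\gamma(\overline{\lambda_0})^+=\gamma^+(I+(\lambda_0-\overline{\lambda_0})(A-\lambda_0)^{-1})$, and then express $\gamma^+v$ and $\gamma f_0$ through \eqref{eq:f} and \eqref{eq:g}. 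The resulting identities should reduce, after collecting terms, to \eqref{eq:f}, \eqref{eq:g} together with the kernel identity $\gamma(\overline{\lambda_0})^+\gamma(\lambda_0)=K_Q(\lambda_0,\overline{\lambda_0})$ from \eqref{O93}. This gives the inclusion $-Q(\omega)^{-1}\subseteq\wh Q(\omega)$.

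For the reverse inclusion I would apply the same inclusion to $\wh Q$ in place of $Q$. Since $\wh{\wh Q}=Q$ and $(\Pi,A,\gamma)$ is recovered from $(\Pi,\wh A,\hat\gamma)$ by the symmetric construction of \Cref{O1}, this yields $-\wh Q(\omega)^{-1}\subseteq Q(\omega)$, which is equivalent to $\wh Q(\omega)\subseteq-Q(\omega)^{-1}$.

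The main obstacle is the algebraic verification in (ii). In particular, getting the first equation for $\hat\gamma g_0$ requires carefully combining \eqref{eq:g} with the resolvent identity for $(A-\lambda_0)^{-1}$ evaluated on $v$. If the direct computation becomes unwieldy, I would instead verify the equivalent relation form $(\hat\gamma g_0;v)\in I+(\omega-\lambda_0)(\wh A-\omega)^{-1}$, written via graphs $\{(\,(A-\lambda_0)^{-1}h+\dots;\,h\,)\}$, which only needs linear bookkeeping.
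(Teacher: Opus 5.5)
Your overall route is the paper's. For (i) you use that $Q$ and $Q+\Theta$ share the minimal realisation $(\Pi,A,\gamma)$. For (ii) you verify characterisation (C) for $\wh Q$ in the realisation $(\Pi,\wh A,\hat\gamma)$ of \Cref{O1}, and get the reverse inclusion by symmetry.

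However, the identities you propose to verify in (ii) are false. The pair $(g_0;-f_0)$ corresponds to the vector $-v$, not to $v$. The second identity already shows this. Since $\wh Q(\lambda_0)^*=-(Q(\lambda_0)^*)^{-1}$ and $\hat\gamma^+=-(Q(\lambda_0)^*)^{-1}\gamma^+$, inserting \eqref{eq:g} yields
\[ \wh Q(\lambda_0)^*g_0+(\omega-\overline{\lambda_0})\hat\gamma^+v=-f_0-2(\omega-\overline{\lambda_0})(Q(\lambda_0)^*)^{-1}\gamma^+v, \]
which is not $-f_0$ unless $\gamma^+v=0$. Likewise, the first identity actually comes out as $(I+(\lambda_0-\omega)(\wh A-\lambda_0)^{-1})v=-\hat\gamma g_0$. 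So the ``collecting terms'' step cannot close as you planned it. The repair is easy: use $-v$, or, as the paper does, verify (C) for $(-g_0;f_0)$ with the same $v$. Linearity of $\wh Q(\omega)$ then still gives $-Q(\omega)^{-1}\subseteq\wh Q(\omega)$.

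Second, the identity that drives the first equation is not $\gamma(\overline{\lambda_0})^+\gamma(\lambda_0)=K_Q(\lambda_0,\overline{\lambda_0})$. That kernel value is the degenerate $0/0$ case, i.e.\ $Q'(\lambda_0)$, and it is not what is needed. What is needed is $K_Q(\lambda_0,\lambda_0)=\gamma^+\gamma$, i.e.\ $(\lambda_0-\overline{\lambda_0})\gamma^+\gamma=Q(\lambda_0)-Q(\lambda_0)^*$. The computation runs as follows.
\begin{itemize}
\item From \eqref{eq:f}, $(\lambda_0-\omega)(A-\lambda_0)^{-1}v=\gamma f_0-v$.
\item Hence $(\lambda_0-\omega)\gamma(\overline{\lambda_0})^+v=\gamma^+[(\lambda_0-\overline{\lambda_0})\gamma f_0+(\overline{\lambda_0}-\omega)v]$.
\item Using \eqref{O24}, $\hat\gamma=\gamma\wh Q(\lambda_0)$ and \eqref{eq:g}, the perturbation term becomes $\hat\gamma(Q(\lambda_0)f_0-g_0)=-\gamma f_0-\hat\gamma g_0$.
\item Together with \eqref{eq:f} this gives $(I+(\lambda_0-\omega)(\wh A-\lambda_0)^{-1})v=-\hat\gamma g_0$.
\end{itemize}

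A minor point in the symmetry step: applying \Cref{O1} to $(\Pi,\wh A,\hat\gamma)$ returns $(\Pi,A,-\gamma)$, not $(\Pi,A,\gamma)$. This is harmless, since replacing $\gamma$ by $-\gamma$ leaves \eqref{O76} unchanged.
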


\begin{remark}
    Later we will see that for the sum of two nonconstant functions statement (i) does not hold, i.e., in general $(Q_1+Q_2)(\omega)\neq Q_1(\omega)+Q_2(\omega)$. This issue will be discussed further in \Cref{S42}.
\end{remark}

\begin{proof}
The equality in (i) is obvious, since $Q$ and $Q+\Theta$ share a minimal realisation. We shall now prove (ii).
Choose $\lambda_0 \in \mf h (Q)$ such that $Q(\lambda_0)$ is boundedly invertible.
    We now make use of the characterisation in Subsection \ref{BC} and show that \eqref{eq:f}, \eqref{eq:g} are equivalent to the corresponding statements for $\wh Q$, i.e.,
    \begin{align}
    \label{O72}
        \hat{\gamma}(-{g_0}) &=\big(I+(\lambda_0-\omega)(\wh A-\lambda_0)^{-1} \big){v}, \\
        \label{O73}
        {f_0} &= \wh Q(\lambda_0)^* (-{g_0})+(\omega-\overline{\lambda_0})\hat{\gamma}^+ {v}.
    \end{align}
Due to symmetry (in the situation of \Cref{O1} we have $\hat{\hat{\gamma}}=-\gamma$ and $\wh{\wh {A \,}}=A$) one implication is sufficient. We choose to show that  \eqref{eq:f}, \eqref{eq:g} imply \eqref{O72}, \eqref{O73}.

Note that \eqref{O73} follows from \eqref{eq:g} by multiplication with $\wh Q(\lambda_0)^*$. Showing that \eqref{O72} follows from \eqref{eq:f} requires a little more work. As preparation we compute, using \eqref{eq:f},
\begin{align}
\nonumber
    (\lambda_0-\omega)\gamma (\overline{\lambda_0})^+{v} &=\gamma^+ \big[(\lambda_0-\omega)I+(\lambda_0-\overline{\lambda_0})(\lambda_0-\omega)(A-\lambda_0)^{-1} \big]{v} \\
    \nonumber
    &= \gamma^+ \big[(\lambda_0-\omega){v}+(\lambda_0-\overline{\lambda_0})(\gamma {f_0}-{v}) \big] \\
    \label{O74}
    &=\gamma^+\big[(\lambda_0-\overline{\lambda_0})\gamma {f_0}+(\overline{\lambda_0}-\omega){v} \big].
\end{align}
Recalling \eqref{O24} and applying \eqref{O74} we obtain
\begin{align*}
    &\big(I+(\lambda_0-\omega)(\wh A-\lambda_0)^{-1} \big){v}-\big(I+(\lambda_0-\omega)(A-\lambda_0)^{-1} \big){v} \\
    &=(\lambda_0-\omega)\gamma \wh Q(\lambda_0) \gamma(\overline{\lambda_0})^+{v} =\hat{\gamma} \gamma^+[(\lambda_0-\overline{\lambda_0})\gamma {f_0}+(\overline{\lambda_0}-\omega){v} ] \\
    &=\hat{\gamma} [(Q(\lambda_0)-Q(\lambda_0)^*){f_0}+(\overline{\lambda_0}-\omega) \gamma^+ {v} ] =-\gamma {f_0}+\hat{\gamma} (-{g_0}).
\end{align*}
Using again \eqref{eq:f} shows that \eqref{O73} is satisfied.
\end{proof}

Let us outline how \Cref{O32} and \Cref{O75} will be used in the proof of the implication (B),(C) $\Rightarrow$ (A),(A'). For some (still unspecified) self-adjoint $\Theta \in \mc B(\mc G)$ consider the function $\wh{Q+\Theta}$, which again belongs to $\mc N(\mc G)$. According to \Cref{O75}, a given $(f_0;g_0) \in Q(\omega)$ transforms into an element $(\tilde f_0,\tilde g_0)$ of $(\wh{Q+\Theta})(\omega)$. We want  to choose $\Theta$ so that we can apply \Cref{O43},  which then provides a (constant) directional function of $\wh{Q+\Theta}$ at $\omega$ with directional pair $(\tilde f_0,\tilde g_0)$. Then \Cref{O32}   will allow us to  transform back to obtain the desired directional function for $Q$ itself.

First, to fix notations
  choose for the function $Q\in \mc N(\mc G)$ a minimal realisation $(\Pi,A,\gamma)$.
 Then, for  self-adjoint $\Theta \in \mc B(\mc G)$, the perturbed function $Q+\Theta$ {has the same minimal realisation}. If $0 \in \rho(Q(\lambda_0)+\Theta)$ we define $\wh{A_\Theta}$ as in \eqref{O24}, i.e.,
\begin{align}
\label{O67}
(\wh{A_\Theta}-\lambda_0)^{-1} \DE (A-\lambda_0)^{-1}+\gamma(\lambda_0)\widehat{\left( {Q+\Theta}\right)}(\lambda_0)\gamma(\overline{\lambda_0})^+.
\end{align}
According to \Cref{O1}, $\wh{A_\Theta}$ is the representing relation in a minimal realisation of $\wh{Q+\Theta}$.

Now we are ready to show that to a given function $Q$ there can always  be added a self-adjoint constant such that the perturbed function is regular and its inverse satisfies the assumptions in Lemma \ref{O43}.

\begin{lemma}
\label{O25}
Let $Q \in \mc N ({\mc G})$ have the minimal realisation $(\Pi,A,\gamma)$, and let $\omega \in \bb R$. Then there exists a self-adjoint $\Theta \in \mc B({\mc G})$ such that $0 \in \rho \big(Q(\lambda_0)+\Theta \big) $ and $\omega$ is not an eigenvalue of $\wh{A_\Theta}$, defined by \eqref{O67}.
\end{lemma}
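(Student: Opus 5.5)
The plan is to translate the condition ``$\omega$ is not an eigenvalue of $\wh{A_\Theta}$'' into a condition on the generalised value $Q(\omega)$, using the results of the preceding subsections. Then I would show that a suitable real multiple of the identity, $\Theta=tI$, satisfies this condition.

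\textbf{Step 1 (invertibility).} For $\Theta=tI$ with $t\in\bb R$ and $|t|>\|Q(\lambda_0)\|$, the operator $Q(\lambda_0)+tI$ is boundedly invertible by a Neumann series argument. So the first requirement holds for all $|t|$ large, and $\wh{A_{tI}}$ is well defined by \eqref{O67}. By \Cref{O1}, $(\Pi,\wh{A_{tI}},\hat\gamma)$ is a minimal realisation of $\wh{Q+tI}$.

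\textbf{Step 2 (translation to $Q(\omega)$).} By \Cref{le:eigenvalue} applied to $\wh{Q+tI}$, the point $\omega$ is an eigenvalue of $\wh{A_{tI}}$ if and only if there is some $g\neq 0$ with $(0;g)\in \wh{Q+tI}(\omega)$. By \Cref{O75} (ii) and then (i),
\[
\wh{Q+tI}(\omega)=-(Q(\omega)+tI)^{-1}=\{(-(g_0+tf_0);f_0) \DS (f_0;g_0)\in Q(\omega)\}.
\]
Hence $\omega$ is an eigenvalue of $\wh{A_{tI}}$ exactly when there is $f_0\neq 0$ with $(f_0;-tf_0)\in Q(\omega)$. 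In other words, $-t$ is an eigenvalue of the linear relation $Q(\omega)$. So it suffices to find one real $t$ with $|t|>\|Q(\lambda_0)\|$ such that $-t$ is not an eigenvalue of $Q(\omega)$.

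\textbf{Step 3 (choosing $t$).} By \Cref{O91}, $Q(\omega)$ is a symmetric relation in the Hilbert space $\mc G$. Therefore its eigenvalues are real, and eigenvectors belonging to distinct eigenvalues are orthogonal: if $(f;-tf),(h;-sh)\in Q(\omega)$, symmetry gives $-t(f,h)=-s(f,h)$, so $(f,h)=0$ for $t\neq s$. When $\mc G$ is separable, there are at most countably many eigenvalues, and any $t$ outside this countable set with $|t|$ large works. This finishes the proof in that case, which covers in particular finite defect.

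\textbf{Main obstacle.} The difficulty is the case of nonseparable $\mc G$, where the counting argument fails. There I would first try to prove an a priori bound instead: for $(f_0;g_0)\in Q(\omega)$ one has $|(g_0,f_0)|\le C\|f_0\|^2$ with $C$ independent of $f_0$. This would exclude every eigenvalue $-t$ with $|t|>C$. To get such a bound, write $g_0$ through \eqref{eq:g} and the vector $v$ from \eqref{eq:f}, and expand $(g_0,f_0)=(Q(\lambda_0)^*f_0,f_0)+(\omega-\overline{\lambda_0})[v,\gamma f_0]$. Then substitute $\gamma f_0=(I+(\lambda_0-\omega)(A-\lambda_0)^{-1})v$, aiming to express $(g_0,f_0)$ through $\|f_0\|^2$ and $[v,v]$-type terms whose sign is controlled. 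A bound of the form $(g_0,f_0)\ge -C\|f_0\|^2$, for instance, would be enough once combined with choosing $t$ negative. In the Pontryagin case I expect the negative part to cause at most finitely many exceptional values, since only $\kappa$ negative directions are available in $\Pi$.

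If this estimate resists, the fallback is to drop the restriction to multiples of the identity and work with general self-adjoint $\Theta$. I would pick $\Theta$ so that $Q(\omega)+\Theta$ has trivial kernel, for example by perturbing $\Theta$ only on a closed subspace that carries the relevant eigenvectors.
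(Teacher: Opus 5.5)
Your Steps~1 and~2 are exactly the paper's reduction: by \Cref{le:eigenvalue} and \Cref{O75}, $\omega$ is an eigenvalue of $\wh{A_\Theta}$ iff $\ker(Q+\Theta)(\omega)\neq\{0\}$. For separable $\mc G$, your counting argument in Step~3 correctly finishes the proof, even with a scalar $\Theta$. The gap is the general case. The lemma allows an arbitrary Hilbert space $\mc G$ (the defect may have any cardinality), and there no multiple of the identity need work. So neither your proposed estimate nor any choice of $t$ can rescue Step~3.

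This already happens in $\mc N_0$. Let $\mc G=\ell^2(\bb R)$ with orthonormal basis $(e_x)_{x\in\bb R}$, and set $Q(\lambda)e_x\DE\frac{x}{1-x\lambda}\,e_x$. The diagonal entries are Herglotz--Nevanlinna functions bounded by $|\IM\lambda|^{-1}$, so $Q\in\mc N_0(\mc G)$. For every $t\in\bb R$ the constant function $e_{-t}$ is a directional function of $Q$ at $0$: indeed $Q(\lambda)e_{-t}\to -te_{-t}$ and $(K_Q(\lambda,\lambda)e_{-t},e_{-t})=t^2|1+t\lambda|^{-2}\to t^2$. Hence, by \Cref{O33}, $(e_{-t};-te_{-t})\in Q(0)$, i.e.\ $(e_{-t};0)\in(Q+tI)(0)$. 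So $0$ is an eigenvalue of $\wh{A_{tI}}$ for \emph{every} $t$. The same pairs give $(g_0,f_0)=-t$ with $\|f_0\|=1$ for all real $t$. Thus even a one-sided bound $(g_0,f_0)\ge -C\|f_0\|^2$ fails, and the Pontryagin structure plays no role.

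What is missing is a precise version of your fallback. The paper takes $\Theta=rI+P$ with $r>\|Q(\lambda_0)\|+1$, which keeps $Q(\lambda_0)+\Theta$ invertible for every orthogonal projection $P$. It then lets $P$ be the orthogonal projection onto $\ker S$, where $S\DE\overline{Q(\omega)}+rI$ is closed and symmetric. If $x\in\ker(S+P)$, then $(x;-Px)\in S$, so $Px\in\ran S$. Symmetry gives $\ran S\perp\ker S$: for $(u;y)\in S$ and $k\in\ker S$ one has $(y,k)=(u,0)=0$. Hence $Px=0$, so $x\in\ker S$ and $x=Px=0$. Since $(Q+\Theta)(\omega)=Q(\omega)+rI+P\subseteq S+P$, the kernel is trivial. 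The point is not to dodge the point spectrum of $Q(\omega)$ by choosing $t$. Instead one adds the identity exactly on the kernel of $Q(\omega)+rI$, which symmetry keeps orthogonal to the range. This needs no separability and no bounds.
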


\begin{proof}
We consider $\Theta$ of the form $rI_{\mc G}+P$, with $r>\|Q(\lambda_0)\|+1$ and $P$ an orthogonal projection in $\mc G$. For such $\Theta$, invertibility of $Q(\lambda_0)+\Theta$ is clear as $\|Q(\lambda_0)+P\| \leq \|Q(\lambda_0)\|+1 <r$. Hence $\wh{A_\Theta}$ is well-defined for all such $\Theta$. We claim that $P$  can be chosen  such that also the second requirement is satisfied.

According to \Cref{le:eigenvalue} the number $\omega$ is an eigenvalue of $\wh{A_\Theta}$ if and only if the generalised value $\wh{(Q+\Theta)}(\omega)$ contains a nontrivial element of the form $(0;{g_0})$. By \Cref{O75} this is equivalent to $\ker (Q+\Theta)(\omega) \neq \{0\}$.
Let $S \DE \overline{Q(\omega)}+rI _{\mc G}$, which is a closed symmetric linear relation. It is enough to show that the orthogonal projection $P$ can be chosen such that $\ker (S+P)=\{0\}$, since 
\[
(Q+\Theta)(\omega)=Q(\omega)+rI_{\mc G}+P \subseteq  \overline{Q(\omega)}+rI_{\mc G}+P=S+P.
\]
The claim thus follows from the following statement:

\textit{Let $S$ be a closed symmetric linear relation in a Hilbert space and let $P$ be the orthogonal projection onto $\ker S$. Then $\ker (S+P)=\{0\}$.}

To see this, note that $x \in \ker (S+P)$ if and only if $(x;-Px) \in S$. This implies $Px \in \ran S \subseteq \ran S^* \subseteq (\ker S)^\perp$ and thus $Px=0$. Consequently $x \in \ker S$, which leads to $x=Px=0$.
\end{proof}

We are now ready to show the implication (B),(C) $\Rightarrow$ (A),(A'). 

\begin{proof}
Choose a minimal realisation $(\Pi,A,\gamma)$ of $Q$. For $\Theta=\Theta^* \in \mc B(\mc G)$ such that $0 \in \rho\big(Q(\lambda_0\big)+\Theta)$, let $(\Pi,\wh{A_\Theta},\hat{\gamma}_{\Theta})$ be the minimal realisation of $\wh{(Q+\Theta)}$ given by \eqref{O67}. By \Cref{O25} we can choose  $\Theta$ such that $\omega$ is not an eigenvalue of $\wh{A_\Theta}$ (in addition to $0 \in \rho(Q(\lambda_0)+\Theta)$). By \Cref{O75} we have
\begin{align*}
    ({f_0};{g_0}) \in Q(\omega) \quad &\Leftrightarrow \quad ({f_0};{g_0}+\Theta {f_0}) \in (Q+\Theta )(\omega) \\
    &\Leftrightarrow \quad (-{g_0}-\Theta {f_0};{f_0}) \in \wh{(Q+\Theta)}(\omega)
\end{align*}
By  (B) $\Leftrightarrow$ (C) this is further equivalent to the existence of an element $v\in\Pi$ with
\begin{align*}
\hat{\gamma}_{\Theta} (-{g_0}-\Theta {f_0}) =\big(I+(\lambda_0-\omega)(\wh{A_\Theta}-\lambda_0)^{-1} \big){v}, \\
{f_0} = \wh{(Q+\Theta )}(\lambda_0)^* (-{g_0}-\Theta{f_0}) +(\omega-\overline{\lambda_0}) (\hat{\gamma}_{\Theta})^+ {v}.
\end{align*}
Since $\omega$ is not an eigenvalue of $\wh{A_\Theta}$, \Cref{O43} implies that the {constant} function  $f_\Theta (\lambda) \DE-{g_0}-\Theta {f_0}$ is a directional function of $\wh{Q+\Theta}$ at $\omega$, and the corresponding directional pair is $(-{g_0}-\Theta {f_0};{f_0})$. By \Cref{O32} (ii) the function $f(\lambda) \DE \wh{(Q+\Theta})(\lambda) f_\Theta (\lambda)$
is a directional function of $Q+\Theta$ at $\omega$ with directional pair $({f_0};{g_0}+\Theta {f_0})$. Applying \Cref{O32} (i)  we finally find that the same function $f(\lambda)$ is a directional function of $Q$ with directional pair $({f_0};{g_0})$.  
\end{proof}

\subsubsection{The uniqueness statements}

Finally we need to prove the remaining statements in \Cref{O33}. 

\begin{proof}
Let first $({f_0};{g_0}) \in Q(\omega)$ and suppose that \eqref{eq:f}, \eqref{eq:g} are satisfied for two elements ${v}, {v}' \in \Pi$. It follows that
\begin{align}\label{eq:diff1}
0 =\big(I+(\lambda_0-\omega)(A-\lambda_0)^{-1} \big)({v}-{v}'), \\\label{eq:diff2}
0 = (\omega-\overline{\lambda_0}) \gamma^+ ({v}-{v}').
\end{align}
Hence if ${v}-{v}'$ is not zero, by \eqref{eq:diff1} it is an eigenvector of $A$. In this case  \Cref{le:eigenvalue} implies $(\omega-\overline{\lambda_0}) \gamma^+ ({v}-{v}') \neq 0$, contradicting \eqref{eq:diff2}. This shows ${v}={v}'$.

Now suppose in addition that $Q$ is strict, which is equivalent to $\ker \gamma =\{0\}$. Let ${v} \in \Pi$ and suppose that \eqref{eq:f}, \eqref{eq:g} are satisfied for two pairs $({f_0};{g_0}), \, (\tilde{{f_0}};\tilde{{g_0}}) \in Q(\omega)$. By \eqref{eq:f} we have $\gamma ({f_0}-\tilde{{f_0}})=0$ and hence ${f_0}=\tilde{{f_0}}$. Now \eqref{eq:g} shows that ${g_0}$ and $\tilde{{g_0}}$ coincide, too.
\end{proof}

\subsection{Interlude: the case $\kappa=0$}\label{sec-Nevanlinna}

This section deals with the special case of a classical Herglotz-Nevanlinna function, and hence a Hilbert space $\mc H$ in place of the Pontryagin space $\Pi$.
The reader, who is  only interested in either the main result or the indefinite situation, might skip this section.

Recall, if $Q$ belongs to $\mc N_0(\mc G)$ then $Q$ has a Herglotz-Nevanlinna integral representation
\begin{align}
\label{O96}
    Q(\lambda)=C+D\lambda+\int_{\bb R} \bigg(\frac{1}{t-\lambda}-\frac{t}{1+t^2}\bigg) \DD \Sigma (t), \qquad \lambda \in \bb C \setminus \bb R.
\end{align}
Here $C,D \in \mc B(\mc G)$ with $D \geq 0$, and $\Sigma$ is a $\mc B(\mc G)$-valued positive measure. 

This gives rise to an alternative representation of $Q(\omega)$, presented in \Cref{O35}. As we learned recently, a very similar representation can also be found in the unpublished manuscript \cite{hassi:spexit}.

Let us consider the multi-valued part of $Q(\omega)$ first.

\begin{lemma}
\label{O27}
    Let $Q \in \mc N_0(\mc G)$ and $\omega \in \bb R$. Then $\mul Q(\omega)=\ran \Sigma(\{\omega \})^{\frac 12}.$
\end{lemma}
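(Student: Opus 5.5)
We identify $\mul Q(\omega)$ with the set of pole vectors at $\omega$ and compute it in the standard Hilbert space model. By \Cref{O33} and \Cref{le:eigenvalue}, $g_0\in\mul Q(\omega)$ with $g_0\neq0$ if and only if there is an eigenvector $v$ of $A$ at $\omega$ with $g_0=(\omega-\overline{\lambda_0})\gamma^+v$; the element $0$ lies in both sets trivially. Hence
\[
\mul Q(\omega)=(\omega-\overline{\lambda_0})\,\gamma^+\ker(A-\omega)=\gamma^+\ker(A-\omega).
\]
It remains to evaluate this in a concrete minimal realisation. By \Cref{O91} (independence of the realisation) we may choose any minimal realisation.

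For the model we take the Hilbert space $L^2_\Sigma(\bb R;\mc G)$ (or the direct sum with a space carrying the linear term $D\lambda$, realised as the multi-valued part of $A$). Here $A$ is multiplication by $t$ and $\gamma x=\frac{x}{t-\lambda_0}$, possibly plus a component in the $D$-part, which does not meet $\ker(A-\omega)$ for real $\omega$. If the $L^2_\Sigma$ model with vector-valued measure is awkward, the same computation works in any minimal realisation, after restricting to the minimal subspace $\overline{\Span}\{\gamma(\lambda)x\}$.

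In this model, $\ker(A-\omega)$ consists of the functions supported on $\{\omega\}$, that is, $v=\chi_{\{\omega\}}h$, and their norm is $\|v\|^2=(\Sigma(\{\omega\})h,h)$. Then
\[
(\gamma^+v,x)=[v,\gamma x]=\Big(\Sigma(\{\omega\})h,\frac{x}{\omega-\lambda_0}\Big),
\]
so that $\gamma^+v=\frac{1}{\omega-\overline{\lambda_0}}\Sigma(\{\omega\})h$. This gives $\gamma^+\ker(A-\omega)=\ran\Sigma(\{\omega\})$, at least when the $L^2$ classes are represented by vectors $h\in\mc G$.

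The main obstacle is the difference between $\ran\Sigma(\{\omega\})$ and $\ran\Sigma(\{\omega\})^{1/2}$ in infinite dimensions. The completion of the equivalence classes $\chi_{\{\omega\}}h$ under the norm $(\Sigma(\{\omega\})h,h)^{1/2}$ is unitarily equivalent to $\overline{\ran}\,\Sigma(\{\omega\})^{1/2}$ via the map $h\mapsto\Sigma(\{\omega\})^{1/2}h$. Under this identification an eigenvector corresponds to some $u\in\overline{\ran}\,\Sigma(\{\omega\})^{1/2}$, and $\gamma^+v$ becomes a multiple of $\Sigma(\{\omega\})^{1/2}u$. Hence the range is exactly $\ran\Sigma(\{\omega\})^{1/2}$.

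To avoid measure-theoretic model issues, an alternative is analytic. Define $\Sigma_0\DE\slim_{y\downarrow0}(-iy)Q(\omega+iy)$; this limit equals $\Sigma(\{\omega\})$ by dominated convergence in the integral representation. On the operator side, $(\lambda-\omega)\gamma^+\gamma(\lambda)$ tends to $\gamma^+P_\omega\gamma$ up to scalars, where $P_\omega$ is the orthogonal projection onto $\ker(A-\omega)$. This yields $\Sigma(\{\omega\})=c\,(P_\omega\gamma)^+(P_\omega\gamma)$ with $|c|=|\omega-\lambda_0|^2$. Since for a bounded operator $T$ one has $\ran T^*=\ran(TT^*)^{1/2}=\ran(T^*T)^{1/2}$, we obtain
\[
\gamma^+\ker(A-\omega)=\ran\big((P_\omega\gamma)^+\big)=\ran\Sigma(\{\omega\})^{1/2}.
\]
To justify the equality $\ran(P_\omega\gamma)^+=\gamma^+\ker(A-\omega)$, use that $(P_\omega\gamma)^+=\gamma^+P_\omega$ maps onto $\gamma^+\ker(A-\omega)$.
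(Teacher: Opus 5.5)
Your argument is correct, and its analytic part is essentially the paper's proof: \Cref{le:eigenvalue} gives $\mul Q(\omega)=\ran(\gamma^*P_\omega)$, and the strong limit of $(\omega-\lambda)Q(\lambda)$ gives $\Sigma(\{\omega\})=|\omega-\lambda_0|^2\gamma^*P_\omega\gamma$; where the paper finishes with the range criterion ($g\in\ran T$ iff $|(h,g)|\le C\|T^*h\|$), you use the equivalent identity $\ran T^*=\ran(T^*T)^{1/2}$ for $T=P_\omega\gamma$. Note that your intermediate claim $\ran T^*=\ran(TT^*)^{1/2}$ is wrong (that range equals $\ran T$, which lies in the other space), but the equality you actually use is right, and the $L^2_\Sigma$-model route is superfluous since it would additionally need a check that the model is minimal.
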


\begin{proof}
    Let $(\mc H,A,\gamma)$ be a minimal realisation of $Q$, where $(\mc H,(.,.))$ is a Hilbert space. Then, for any $v \in \mc H$, it follows from the spectral theorem that $\lim_{\lambda \hat{\to} \omega} (\omega-\lambda)(A-\lambda)^{-1}v=P_{\omega}v$, where $P_{\omega}$ is the orthogonal projection onto $\ker(A-\omega)$. Hence, for any $h \in \mc G$,
    \begin{align}
    \label{O10}
        \lim_{\lambda \hat{\to} \omega} (\omega-\lambda) \gamma (\lambda)h=(\omega-\lambda_0)P_{\omega}\gamma h
    \end{align}
    and
    \begin{align}
    \label{O45}
    \begin{split}
        &\lim_{\lambda \hat{\to} \omega} (\omega-\lambda)Q(\lambda)h=\lim_{\lambda \hat{\to} \omega} (\omega-\lambda)\big(Q(\lambda_0)^*+(\lambda-\overline{\lambda_0})\gamma^*\gamma(\lambda) \big)h \\
        &=|\omega-\lambda_0|^2 \gamma^*P_{\omega} \gamma h.
    \end{split}
    \end{align}
    Using the integral representation \eqref{O96} of $Q$, it follows that
    \begin{align}
    \label{O12}
        \Sigma(\{\omega\})=\slim_{\lambda \hat{\to} \omega} (\omega-\lambda)Q(\lambda)=|\omega-\lambda_0|^2 \gamma^*P_{\omega} \gamma.
    \end{align}
    From \Cref{le:eigenvalue} we know that $\mul Q(\omega)=\ran (\gamma^* P_{\omega})$, and we claim that with \eqref{O12} this further equals $\ran (\gamma^* P_{\omega})=\ran \Sigma(\{\omega \})^{\frac 12}$. To prove this claim we use \cite[Corollary~2]{hassi:2004}, which gives  a characterisation of the range of a \ linear relation between Hilbert spaces. For a bounded operator this characterisation reads as follows:
    
    \emph{Let $T \in \mc B(\mc H_1,\mc H_2)$. Then $g \in \ran T$ if and only if there is a constant $C>0$ such that $|(h,g)| \leq C\|T^* h\|$ for all $h \in \mc H_2$.}

    With this statement in mind, let  $g \in \ran \gamma^*P_{\omega}$ with $v \in \mc H$ such that $g=\gamma^* P_{\omega} v$. Then for $h \in \mc G$ we have
    \begin{align*}
        |(h,g)_{\mc G}|&=|(P_{\omega}\gamma h,v)_{\mc H}| \leq \|P_{\omega}\gamma h \|_{\mc H} \cdot \|v\|_{\mc H}=\|v \|_{\mc H} \sqrt{(\gamma^* P_{\omega} \gamma h,h)_{\mc G}} \\
        &=\frac{\|v \|_{\mc H}}{|\omega-\lambda_0|} \sqrt{(\Sigma (\{\omega \})h,h)_{\mc G}}=\frac{\|v \|_{\mc H}}{|\omega-\lambda_0|} \|\Sigma (\{\omega \})^{\frac 12}h\|_{\mc G}.
    \end{align*}
   Hence $g \in \ran \Sigma (\{\omega \})^{\frac 12}$.

    Conversely, let $g \in \ran \Sigma (\{\omega \})^{\frac 12}$ with $f \in \mc G$ such that $g=\Sigma (\{\omega \})^{\frac 12}f$. If $h \in \mc G$ then
    \begin{align*}
        |(h,g&)_{\mc G}|=|(\Sigma (\{\omega \})^{\frac 12} h,f)_{\mc G}| \leq \|f \|_{\mc G} \sqrt{(\Sigma (\{\omega \}) h,h)_{\mc G}} \\
        &=|\omega-\lambda_0| \cdot \|f \|_{\mc G}  \sqrt{(\gamma^* P_{\omega} \gamma h,h)_{\mc G}}=|\omega-\lambda_0| \cdot \|f \|_{\mc G} \cdot \|(\gamma^* P_{\omega})^* h\|_{\mc H}.
    \end{align*}
    Using again the characterisation of the range of a bounded operator, it follows that $g \in \ran \gamma^*P_{\omega}$.
\end{proof}

Next up is a description of $\dom Q(\omega)$.

\begin{lemma}
\label{O26}
    Let $Q \in \mc N_0(\mc G)$ and $\omega \in \bb R$. Then
    \begin{align}
    \label{O23}
        \dom Q(\omega)=\bigg\{ f_0 \in \mc G \DS \int_{\bb R} \frac{\DD (\Sigma(t)f_0,f_0)}{(t-\omega)^2}<\infty \bigg\}.
    \end{align}
    Furthermore, for any $f_0 \in \dom Q(\omega)$ the constant function $f(\lambda) \DE f_0$ is a directional function of $Q$ at $\omega$.
\end{lemma}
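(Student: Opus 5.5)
The plan is to prove both assertions from the Herglotz--Nevanlinna representation \eqref{O96}, working in the spectral picture of a minimal realisation $(\mc H,A,\gamma)$ in a Hilbert space. By the equivalence (B) $\Leftrightarrow$ (C) of \Cref{O33}, $f_0\in\dom Q(\omega)$ holds if and only if there is $v\in\mc H$ with $\gamma f_0=(I+(\lambda_0-\omega)(A-\lambda_0)^{-1})v$. In terms of linear relations this means
\[
\gamma f_0\in \dom\big(I+(\omega-\lambda_0)(A-\omega)^{-1}\big).
\]
I would translate this into an integrability condition via the spectral measure of $A$, and then compare that measure with $\Sigma$.

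\textbf{Step 1 (reduction to the operator part).} Decompose $\mc H=\overline{\dom A}\oplus\mul A$ and let $A_{\rm op}$ be the operator part. On $\mul A$ the relation $I+(\omega-\lambda_0)(A-\omega)^{-1}$ acts as the identity, since $(A-\omega)^{-1}$ vanishes there. Hence only the component $x\DE P_{\overline{\dom A}}\gamma f_0$ matters. Let $E$ be the spectral measure of $A_{\rm op}$ and set $\mu_{f_0}(\Delta)\DE(E(\Delta)x,x)$. The spectral theorem then gives
\[
\gamma f_0\in\dom Q(\omega)\text{-condition}\iff \int_{\bb R}\frac{|t-\lambda_0|^2}{|t-\omega|^2}\DD\mu_{f_0}(t)<\infty,
\]
where the integrand is read as $+\infty$ at an atom at $\omega$. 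Indeed, the function $1+(\omega-\lambda_0)/(t-\omega)=(t-\lambda_0)/(t-\omega)$ must be square integrable against $\mu_{f_0}$, and $v$ is then given by the functional calculus.

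\textbf{Step 2 (identify $\Sigma$ with $\mu_{f_0}$).} For $\lambda\in\bb C\setminus\bb R$ I would use \eqref{O93} together with $\gamma(\lambda)=(I+(\lambda-\lambda_0)(A-\lambda)^{-1})\gamma$ and the identity action on $\mul A$. This yields
\[
\frac{(\IM Q(\lambda)f_0,f_0)}{\IM\lambda}=\|\gamma(\lambda)f_0\|^2=\|P_{\mul A}\gamma f_0\|^2+\int\frac{|t-\lambda_0|^2}{|t-\lambda|^2}\DD\mu_{f_0}(t).
\]
Comparing with \eqref{O96}, where the left-hand side equals $(Df_0,f_0)+\int|t-\lambda|^{-2}\DD(\Sigma(t)f_0,f_0)$, I would then use Stieltjes inversion (or uniqueness of the representing measure of the scalar Nevanlinna function $(Q(\lambda)f_0,f_0)$) to conclude
\[
\DD(\Sigma(t)f_0,f_0)=|t-\lambda_0|^2\DD\mu_{f_0}(t)\quad\text{and}\quad (Df_0,f_0)=\|P_{\mul A}\gamma f_0\|^2.
\]
Inserting the first identity into Step 1 gives exactly \eqref{O23}.

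\textbf{Step 3 (constant directional function).} Let $f_0\in\dom Q(\omega)$, so that $\int(t-\omega)^{-2}\DD(\Sigma f_0,f_0)<\infty$; in particular there is no atom at $\omega$. In a Stolz angle $W_{\omega,\theta}$ one has $|t-\lambda|\ge c_\theta|t-\omega|$. Hence the boundedness condition \eqref{O37} follows from the displayed formula, the integral being dominated by $c_\theta^{-2}\int(t-\omega)^{-2}\DD(\Sigma f_0,f_0)$. For the existence of $\lim Q(\lambda)f_0$, I would apply dominated convergence to $(Q(\lambda)f_0,h)$ written via \eqref{O96}. Near $\omega$, polarisation and the bound
\[
\Big|\frac{1}{t-\lambda}-\frac{1}{t-\omega}\Big|=\frac{|\lambda-\omega|}{|t-\lambda||t-\omega|}
\]
control the integrand by $C(t-\omega)^{-2}$-type terms. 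Alternatively, and more cleanly, I would use $\gamma(\lambda)f_0=(I+(\lambda-\omega)(A-\lambda)^{-1})v$, where the spectral measure of $v$ has no atom at $\omega$ (its density is $|t-\lambda_0|^2/|t-\omega|^2$ times $\mu_{f_0}$). Since $v$ is then orthogonal to $\ker(A-\omega)$, the argument of \Cref{O36} applied to this $v$ alone gives $\gamma(\lambda)f_0\to v$ strongly, and the limit of $Q(\lambda)f_0$ follows from \eqref{O59}.

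The main obstacle I expect is Step 2. There the operator-valued measure $\Sigma$ must be matched precisely with the spectral measure of $\gamma f_0$, including correct bookkeeping of the multivalued part and the linear term $D$, and one needs a density argument over all $f_0$ to get a statement about the full measure. Everything else is standard spectral calculus.
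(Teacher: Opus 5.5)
Your proof is correct, but for the identity \eqref{O23} it takes a different route from the paper. The paper never compares $\Sigma$ with the spectral measure of $A$. It first proves the ``furthermore'' part exactly as in your second variant of Step~3, namely $\gamma(\lambda)f_0=(I+(\lambda-\omega)(A-\lambda)^{-1})v\to(I-P_\omega)v$. It then gets ``$\subseteq$'' by monotone convergence from $\int\frac{\DD(\Sigma(t)f_0,f_0)}{|t-(\omega+ir)|^2}=(K_Q(\omega+ir,\omega+ir)f_0,f_0)-(Df_0,f_0)$. Finally it gets ``$\supseteq$'' by showing, with dominated convergence in Stolz angles, that the constant $f_0$ is a directional function, after which \Cref{O33} gives $f_0\in\dom Q(\omega)$.

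You instead read off $\dom Q(\omega)=\{f_0\in\mc G \DS \gamma f_0\in\ran(A-\omega)\}$ from \Cref{O15}. You translate this into integrability of $|t-\omega|^{-2}$ against the spectral measure $\mu_{f_0}$ of $P_{\overline{\dom A}}\gamma f_0$. You then identify $\DD(\Sigma(t)f_0,f_0)=|t-\lambda_0|^2\DD\mu_{f_0}(t)$ and $(Df_0,f_0)=\|P_{\mul A}\gamma f_0\|^2$ by Stieltjes inversion, applied to the two expressions for $(K_Q(\lambda,\lambda)f_0,f_0)$. This settles both inclusions at once, without any boundary-limit argument and without the directional-function machinery. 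As a by-product it gives the explicit link between $\Sigma$ and the spectral measure of $A$, of which \eqref{O12} is the atomic part. The price is the bookkeeping with the operator part of $A$ and an appeal to uniqueness of the scalar Nevanlinna measure. The paper's argument, by contrast, stays entirely within the framework of directional functions and \Cref{O33}.

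Two minor points. First, no density argument over $f_0$ is needed in Step~2, since \eqref{O23} involves only the scalar measure $(\Sigma(\cdot)f_0,f_0)$ for the fixed $f_0$. Second, in Step~3 rely on your second variant, which gives norm convergence directly. The first variant does not work as written: $(t-\omega)^{-2}$ need not be integrable against the cross measure $(\Sigma(\cdot)f_0,h)$. One would have to use the finer bound $\frac{|\lambda-\omega|}{|t-\lambda||t-\omega|}\le\frac{1}{\sin\theta\,|t-\omega|}$ together with Cauchy--Schwarz, and the absence of an atom of $(\Sigma(\cdot)f_0,h)$ at $\omega$.
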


\begin{proof}
    We start by showing that every $f_0 \in \dom Q(\omega)$ serves as a constant directional function for $Q$ at $\omega$.  Let a minimal realisation $(\mc H,A,\gamma)$ of $Q$ be fixed, then according to \Cref{O33} there exists $v \in \mc H$ such that \eqref{eq:f} holds true (we do not need \eqref{eq:g} here). It follows  that

    \begin{align*}
        \gamma (\lambda) f_0=(I+(\lambda-\omega)(A-\lambda)^{-1})v \xrightarrow{\lambda \hat{\to} \omega} (I-P_{\omega})v,
    \end{align*}
    where $P_{\omega}$ denotes the orthogonal projection onto $\ker (A-\omega)$. Using \eqref{O59} and \eqref{O93}, it follows that $f_0$ is a directional function of $Q$ at $\omega$. 
    This fact can be used to prove 
    the inclusion ``$\subseteq$'' in \eqref{O23}: By monotone convergence,
    \begin{align*}
        \int_{\bb R} \frac{\DD (\Sigma(t)f_0,f_0)}{(t-\omega)^2} &=\lim_{r \searrow 0} \int_{\bb R} \frac{\DD (\Sigma(t)f_0,f_0)}{|t-(\omega+ir)|^2}\\
        &=\lim_{r \searrow 0} \big(K_Q(\omega+ir,\omega+ir)f_0,f_0 \big)-(Df_0,f_0)<\infty.
    \end{align*}
    If, on the contrary, $\int_{\bb R} \frac{\DD (\Sigma(t)f_0,f_0)}{(t-\omega)^2}<\infty$, let us check that $f_0$ is a directional function of $Q$ at $\omega$. Let $W_{\omega,\theta} \DE \{{\omega} + re^{i\phi} \DS r>0, \, |\phi-\frac{\pi}{2}| \leq \frac{\pi}{2}-\theta \}$  for $\theta \in (0,\frac{\pi}{2})$ be an angular domain centered at $\omega$ and let $\lambda \in W_{\omega,\theta}$. Then 
    \begin{align*}
        \frac{1}{|t-\lambda|^2} \leq \frac{1}{(1-\cos \theta)|t-(\omega+ir)|^2} \leq \frac{1}{(1-\cos \theta)(t-\omega)^2}.
    \end{align*}
    Due to dominated convergence we have
    \begin{align*}
        \lim_{\lambda \in W_{\omega,\theta}} \big(K_Q(\lambda,\lambda)f_0,f_0 \big) &=(Df_0,f_0)+\lim_{\lambda \in W_{\omega,\theta}}\int_{\bb R} \frac{\DD (\Sigma(t)f_0,f_0)}{|t-\lambda|^2} \\
        &=(Df_0,f_0)+\int_{\bb R} \frac{\DD (\Sigma(t)f_0,f_0)}{(t-\omega)^2}<\infty.
    \end{align*}
    Hence $(\gamma(\lambda)f_0,\gamma(\lambda)f_0)$ converges as $\lambda \hat{\to} \omega$. Given the existence of the limit $\lim_{\lambda \hat{\to} \omega} (\gamma(\lambda)f_0,\gamma(\lambda)f_0)$, convergence of $\gamma(\lambda)f_0$ will be established if we can show that $\lim_{\lambda \hat{\to} \omega} (\gamma (\lambda)f_0,\xi)$ exists for all $\xi$ from a dense subset of $\mc G$. We use the dense subset $\Span \big\{\gamma (\mu) x \DS \mu \in \bb C \setminus \bb R, \, x \in {\mc G} \big\}$, and thus it will be enough to show that
    \begin{align*}
        (\gamma(\lambda)f_0,\gamma(\mu)x) &=(\gamma(\mu)^*\gamma(\lambda)f_0,x)=(K_Q(\lambda,\mu)f_0,x) \\
        &=(Df_0,x)+\int_{\bb R} \frac{\DD (\Sigma (t)f_0,x)}{(t-\lambda)(t-\overline{\mu})} 
    \end{align*}
    converges as $\lambda \hat{\to} \omega$. Due to $|(\Sigma(\cdot)f_0,x)|^2 \leq (\Sigma(\cdot)f_0,f_0)(\Sigma(\cdot)x,x)$ and Cauchy-Schwarz we get, for $\lambda \in W_{\omega,\theta}$, that
    \begin{align*}
        \int_{\bb R} &\frac{\DD |(\Sigma (t)f_0,x)|}{|(t-\lambda)(t-\overline{\mu})|} \leq \frac{1}{\sqrt{1-\cos \theta}}\int_{\bb R} \frac{\DD |(\Sigma (t)f_0,x)|}{|(t-\omega)(t-\overline{\mu})|} \\
        &\hspace{50pt}\leq \frac{1}{\sqrt{1-\cos \theta}}\bigg(\int_{\bb R} \frac{\DD (\Sigma (t)f_0,f_0)}{(t-\omega)^2} \int_{\bb R} \frac{\DD (\Sigma (t)x,x)}{|t-\mu|^2} \bigg)^{\frac 12}<\infty.
    \end{align*}
    Hence dominated convergence applies and  $(\gamma(\lambda)f_0,\gamma(\mu)x)$ converges.
\end{proof}

The lemmata \ref{O27} and \ref{O26} together give a complete description of the generalised value $Q(\omega)$. A similar result is given in the unpublished work \cite{hassi:spexit}.
\begin{theorem}
\label{O35}
    Let $Q \in \mc N_0(\mc G)$ and $\omega \in \bb R$. Then 
    \begin{align*}
        Q(\omega)=Q_{\emph{op}}(\omega) \dotplus (\{0\} \times \ran \Sigma (\{\omega \})^{\frac 12}).
    \end{align*}
    where
    \begin{align*}
        Q_{\emph{op}}(\omega) \DE \bigg\{ \big(f_0;\lim_{\lambda \hat{\to} \omega} Q(\lambda)f_0 \big) \DS \int_{\bb R} \frac{\DD (\Sigma(t)f_0,f_0)}{(t-\omega)^2}<\infty \bigg\}.
    \end{align*}
\end{theorem}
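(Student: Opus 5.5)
We will prove the decomposition by combining \Cref{O27} and \Cref{O26}, using that $Q(\omega)$ is a linear relation whose multivalued part is $\mul Q(\omega)=\{g \DS (0;g)\in Q(\omega)\}$. Fix a minimal realisation $(\mc H,A,\gamma)$ in a Hilbert space. The plan has three steps.

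First we show $Q_{\rm op}(\omega)\subseteq Q(\omega)$ and that $Q_{\rm op}(\omega)$ is the graph of a linear operator. Let $f_0$ satisfy $\int (t-\omega)^{-2}\DD(\Sigma(t)f_0,f_0)<\infty$. By \Cref{O26}, $f_0\in\dom Q(\omega)$ and the constant function $f(\lambda)\equiv f_0$ is a directional function of $Q$ at $\omega$. By definition the limit $\lim_{\lambda\hat\to\omega}Q(\lambda)f_0$ therefore exists, and the implication (A)$\Rightarrow$(B) of \Cref{O33} gives $(f_0;\lim Q(\lambda)f_0)\in Q(\omega)$. The map $f_0\mapsto \lim Q(\lambda)f_0$ is clearly linear, and its domain is a subspace: by the Cauchy--Schwarz estimate already used in the proof of \Cref{O26}, the set of admissible $f_0$ is closed under sums. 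Hence $Q_{\rm op}(\omega)$ is a single-valued linear relation and $\dom Q_{\rm op}(\omega)=\dom Q(\omega)$ by \eqref{O23}.

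Second, we verify $\{0\}\times\ran\Sigma(\{\omega\})^{1/2}=\{0\}\times\mul Q(\omega)\subseteq Q(\omega)$, which is exactly \Cref{O27}.

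Third, we prove the reverse inclusion, which is the only genuine step. Take $(f_0;g_0)\in Q(\omega)$. Then $f_0\in\dom Q(\omega)$, so by \Cref{O26} $(f_0;\tilde g_0)\in Q_{\rm op}(\omega)\subseteq Q(\omega)$ for $\tilde g_0=\lim Q(\lambda)f_0$. By linearity, $(0;g_0-\tilde g_0)\in Q(\omega)$, so $g_0-\tilde g_0\in\mul Q(\omega)=\ran\Sigma(\{\omega\})^{1/2}$. This proves the equality of relations. The sum is direct because $Q_{\rm op}(\omega)$ is single-valued: an element of $Q_{\rm op}(\omega)\cap(\{0\}\times\mc G)$ has first component $0$, and its second component is then $\lim Q(\lambda)0=0$.

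I do not expect a serious obstacle. The one point to check carefully is that the limit defining $Q_{\rm op}(\omega)$ exists for every admissible $f_0$, and that it is precisely the second component supplied by \Cref{O33}; both are contained in \Cref{O26} together with the definition of directional pairs.
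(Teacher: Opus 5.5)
Your proposal is correct and follows essentially the paper's proof: \Cref{O26} supplies the constant directional function, hence $(f_0;\lim_{\lambda \hat{\to} \omega} Q(\lambda)f_0)\in Q(\omega)$, and \Cref{O27} identifies the multivalued part; subtracting gives the reverse inclusion exactly as the paper does. Your explicit verification that $Q_{\rm op}(\omega)$ is a single-valued linear relation, and hence that the sum is direct, is a small detail the paper leaves implicit.
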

\begin{proof}
    Let $(f_0;g_0) \in Q(\omega)$. \Cref{O26} yields $\int_{\bb R} \frac{\DD (\Sigma(t)f_0,f_0)}{(t-\omega)^2}<\infty$ and existence of $g_0' \DE \lim_{\lambda \hat{\to} \omega} Q(\lambda)f_0$. Hence $(f_0;g_0') \in Q_{\text{op}}(\omega)$ and $(0;g_0-g_0') \in Q(\omega)$. By \Cref{O27} we have $(0;g_0-g_0') \in \{0\} \times \ran \Sigma(\{\omega\})^{\frac 12}$. Conversely, \Cref{O26} and \Cref{O27} give $Q_{\text{op}}(\omega) \subseteq Q(\omega)$ and $\{0\} \times \ran \Sigma (\{\omega \})^{\frac 12} \subseteq Q(\omega)$.
\end{proof}

So far in this section, we have described the generalised value, i.e., the relation $Q(\omega)$. The following results, however, concern directional functions.

First we construct particularly simple directional functions corresponding to elements in the range of $\Sigma(\{\omega \})$. Note that $\ran \Sigma(\{\omega \})$ may be a proper subset of $\mul Q(\omega)=\ran \Sigma(\{\omega \})^{\frac 12}$, in which case this construction does not yield a concrete directional function for all elements of $\mul Q(\omega)$.

\begin{proposition}
\label{O38}
    Let $Q \in \mc N_0(\mc G)$ and $\omega \in \bb R$. Pick $g_0 \in \ran \Sigma(\{\omega \})$ with $g_0=\Sigma(\{\omega \})h$. Then the function $f(\lambda) \DE (\omega-\lambda)h$ is a directional function of $Q$ at $\omega$ with directional pair $(0;g_0)$.
\end{proposition}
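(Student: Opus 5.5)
We verify the three requirements of \Cref{O31} directly, working with the integral representation \eqref{O96} rather than an operator model. Write $\Sigma_\omega \DE \Sigma(\{\omega\})$ and split $\Sigma=\Sigma_\omega\delta_\omega+\Sigma'$, where $\Sigma'$ has no atom at $\omega$. Then
\begin{align*}
Q(\lambda)=\frac{\Sigma_\omega}{\omega-\lambda}+Q'(\lambda),
\end{align*}
where $Q'(\lambda)=C'+D\lambda+\int_{\bb R}\big(\frac{1}{t-\lambda}-\frac{t}{1+t^2}\big)\DD\Sigma'(t)$ and $C'$ absorbs the constant $-\frac{\omega}{1+\omega^2}\Sigma_\omega$. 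The first requirement is immediate: $f(\lambda)=(\omega-\lambda)h\to 0=f_0$.

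For the second requirement we compute
\begin{align*}
Q(\lambda)f(\lambda)=\Sigma_\omega h+(\omega-\lambda)Q'(\lambda)h.
\end{align*}
We claim that $(\omega-\lambda)Q'(\lambda)h\to 0$ nontangentially. This is the standard fact \eqref{O12} applied to $Q'$, whose measure has no atom at $\omega$, so that $\slim_{\lambda\hat\to\omega}(\omega-\lambda)Q'(\lambda)=\Sigma'(\{\omega\})=0$. Alternatively, one can check it directly. The terms $C'$ and $D\lambda$ are harmless. For the integral, on $W_{\omega,\theta}$ the integrand $\frac{\omega-\lambda}{t-\lambda}$ is bounded by a constant depending only on $\theta$, and it tends to $0$ pointwise for $t\neq\omega$. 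Dominated convergence then applies, after splitting the integral into $|t-\omega|<1$, where $\Sigma'$ is a finite measure, and $|t-\omega|\ge1$, where the $\frac{t}{1+t^2}$ correction gives integrability. This yields $g_0=\Sigma_\omega h$.

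The main point is the third requirement, the boundedness condition \eqref{O37}. We have
\begin{align*}
\Big(\frac{Q(\lambda)-Q(\lambda)^*}{\lambda-\overline\lambda}f,f\Big)=|\omega-\lambda|^2\Big((Dh,h)+\int_{\bb R}\frac{\DD(\Sigma(t)h,h)}{|t-\lambda|^2}\Big).
\end{align*}
The atom at $\omega$ contributes exactly $(\Sigma_\omega h,h)$, which is bounded. The remaining part is $\int\frac{|\omega-\lambda|^2}{|t-\lambda|^2}\DD(\Sigma'(t)h,h)$. Its integrand is bounded on $W_{\omega,\theta}$ by a constant $c_\theta$. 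Near $\omega$ the measure is finite. Far from $\omega$, we have $\frac{|\omega-\lambda|^2}{|t-\lambda|^2}\lesssim\frac{1}{1+t^2}$ uniformly for $\lambda$ near $\omega$, and $\int\frac{\DD(\Sigma h,h)}{1+t^2}<\infty$. Hence the expression is bounded; in fact it tends to $(\Sigma_\omega h,h)$. This completes the verification, and $f$ is a directional function with pair $(0;\Sigma_\omega h)$.
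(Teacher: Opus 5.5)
Your proof is correct, but it takes a different route from the paper. The paper works in a minimal realisation $(\mc H,A,\gamma)$ and reuses identities from the proof of \Cref{O27}:
\begin{itemize}
\item by \eqref{O10}, $\gamma(\lambda)f(\lambda)=(\omega-\lambda)\gamma(\lambda)h$ converges in norm to $(\omega-\lambda_0)P_\omega\gamma h \in \ker(A-\omega)$;
\item hence $(K_Q(\lambda,\lambda)f(\lambda),f(\lambda))=\|\gamma(\lambda)f(\lambda)\|^2$ converges;
\item \eqref{O45} and \eqref{O12} give $Q(\lambda)f(\lambda)\to\Sigma(\{\omega\})h$.
\end{itemize}

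You instead stay with the integral representation \eqref{O96}. Splitting off the atom, $Q=\Sigma_\omega(\omega-\lambda)^{-1}+Q'$, makes the cancellation mechanism explicit. The kernel condition then reduces to scalar dominated convergence with respect to $(\Sigma(\cdot)h,h)$, so you need neither a realisation nor the spectral theorem. The limit $(\Sigma_\omega h,h)$ you obtain agrees with the paper's $|\omega-\lambda_0|^2\|P_\omega\gamma h\|^2$ by \eqref{O12}.

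The paper's route is shorter, given what was already established, and it yields slightly more. It shows that $\gamma(\lambda)f(\lambda)$ converges in norm, not just weakly, to an element of $\ker(A-\omega)$. This identifies the vector $v$ of \Cref{O33} concretely, in the spirit of \Cref{le:eigenvalue}.

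One small caveat concerns your ``direct'' alternative for the second requirement. When $\dim\mc G=\infty$, scalar dominated convergence for the operator-valued measure $\Sigma'$ only gives weak convergence of $(\omega-\lambda)Q'(\lambda)h$. Norm convergence needs an extra estimate, for instance $\|\int_\Delta\phi\,\DD\Sigma'(t)\,h\|^2\le\|\Sigma'(\Delta)\|\int_\Delta|\phi|^2\,\DD(\Sigma'(t)h,h)$ on bounded $\Delta$. Your primary justification, \eqref{O12} applied to $Q'$, already gives a strong limit, so the proof is unaffected.
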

\begin{proof}
    By \eqref{O10},
    \begin{align*}
        \big((&K_Q(\lambda,\lambda)f(\lambda), f(\lambda)) \big)_{\mc G} =(\gamma (\lambda)f(\lambda),\gamma (\lambda) f(\lambda) )_{\mc H} \\
        &\xrightarrow{\lambda \hat{\to} \omega} |\omega-\lambda_0|^2 \| P_\omega \gamma h\|^2_{\mc H}.
    \end{align*}
    By \eqref{O45} and \eqref{O12} we have $\lim_{\lambda \hat{\to} \omega} Q(\lambda)f(\lambda)  \Sigma (\{\omega \})h=g_0$. The assertion follows since $\lim_{\lambda \hat{\to} \omega} f(\lambda) = 0$.
\end{proof}

In \Cref{O39} we claimed that the sum of two directional functions of $Q \in \mc N(\mc G)$ at $\omega \in \bb R$ is again a directional function. An important step in the proof is the following special case for $Q \in \mc N_0(\mc G)$.

\begin{lemma}
    \label{O71}
    Let $Q \in \mc N_0(\mc G)$ and $\omega \in \bb R$. If $f$ and $\tilde f$ are directional functions of $Q$ at $\omega$, then so is $f+\tilde f$.
\end{lemma}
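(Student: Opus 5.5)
The argument is a direct check of the three conditions in \Cref{O31} for $f+\tilde f$. Two of them are immediate by linearity: $f+\tilde f$ is analytic on the intersection of the two domains, which is again of the required form $\mc U_\omega\cap\mf h(Q)\cap\bb C^+$. Moreover $f(\lambda)+\tilde f(\lambda)\to f_0+\tilde f_0$ and $Q(\lambda)(f(\lambda)+\tilde f(\lambda))\to g_0+\tilde g_0$ as $\lambda\hat\to\omega$. The only work is the boundedness condition \eqref{O37}, which is quadratic in the directional function.

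For $Q\in\mc N_0(\mc G)$ this quadratic condition is handled by positivity. Fix a minimal realisation $(\mc H,A,\gamma)$ with $\mc H$ a Hilbert space. By \eqref{O93},
\[
\Big(\frac{Q(\lambda)-Q(\lambda)^*}{\lambda-\overline\lambda}h,h\Big)_{\mc G}=\|\gamma(\lambda)h\|_{\mc H}^2 ,
\]
so the expression in \eqref{O37} is nonnegative. Condition \eqref{O37} for $f$ therefore says exactly that $\|\gamma(\lambda)f(\lambda)\|_{\mc H}$ stays bounded as $\lambda\hat\to\omega$, and likewise for $\tilde f$. The triangle inequality then gives
\[
\|\gamma(\lambda)(f(\lambda)+\tilde f(\lambda))\|_{\mc H}\le\|\gamma(\lambda)f(\lambda)\|_{\mc H}+\|\gamma(\lambda)\tilde f(\lambda)\|_{\mc H},
\]
which is bounded. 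Equivalently, the kernel $K_Q(\lambda,\lambda)$ is a nonnegative operator and one may apply the Cauchy--Schwarz inequality $|(K_Q f,\tilde f)|\le (K_Qf,f)^{1/2}(K_Q\tilde f,\tilde f)^{1/2}$ to the cross terms.

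One point needs care: the meaning of "bounded as $\lambda\hat\to\omega$". The convention will be boundedness on $W_{\omega,\theta}\cap\{|\lambda-\omega|<\varepsilon\}$ for each angle $\theta$, with $\varepsilon$ allowed to depend on $\theta$. The triangle inequality is compatible with this reading, since for each $\theta$ one simply takes the smaller of the two radii $\varepsilon$. I therefore do not expect a genuine obstacle. The essential ingredient is that $\kappa=0$ makes the kernel positive, and that is precisely why \Cref{O39} has to reduce the general case $\kappa>0$ to this one.
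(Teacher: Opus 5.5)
Your proposal is correct and matches the paper's own proof. The limits in \eqref{eq:diff} follow by linearity, and the boundedness condition \eqref{O37} follows from the nonnegativity of $K_Q(\lambda,\lambda)$ for $Q\in\mc N_0(\mc G)$ via Cauchy--Schwarz. Your triangle-inequality bound on $\|\gamma(\lambda)f(\lambda)\|$ is the same estimate written in the Hilbert space of a realisation.
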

\begin{proof}
    If $f$ and $\tilde f$ are directional functions then property  \eqref{eq:diff} obviously also holds for the sum. In order to see the second property \eqref{O37} we observe that by the positive definiteness of the kernel $K_Q$ we can use Cauchy-Schwarz, i.e., 
    \begin{align*}
        |\big(K_Q(\lambda,\lambda)f(\lambda),\tilde f(\lambda) \big)|^2 \leq \big(K_Q(\lambda,\lambda)f(\lambda),f(\lambda)\big) \big(K_Q(\lambda,\lambda)\tilde f(\lambda),\tilde f(\lambda)\big).
    \end{align*}
    Thus boundedness  of  $(K_Q(\lambda,\lambda)f(\lambda),f(\lambda))$ and $(K_Q(\lambda,\lambda) \tilde f(\lambda),\tilde f(\lambda))$  as $\lambda \hat{\to} \omega$, which holds by assumption,  implies also boundedness of the corresponding term for $f+\tilde f$.
\end{proof}

In the following result we further assume that the dimension of $\mc G$ is finite. This simplifies the  representation of $Q(\omega)$ from \Cref{O35}, since then the range of the matrix $\Sigma (\{\omega \})^{\frac 12}$ coincides with the range of $\Sigma (\{\omega \})$.

\begin{corollary}
\label{O97}
    Let $Q \in \mc N_0(\mc G)$, where $\dim \mc G<\infty$. Let further $\omega \in \bb R$. Then every $(f_0;g_0) \in Q(\omega)$ is obtained from some directional function that is a polynomial of degree not larger than one.
\end{corollary}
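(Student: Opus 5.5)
The plan is to decompose a given pair $(f_0;g_0)\in Q(\omega)$ via \Cref{O35} and then add two directional functions, one constant and one linear. By \Cref{O35} we can write
\[
(f_0;g_0)=(f_0;g_0')+(0;g_0-g_0'),
\]
where $g_0'=\lim_{\lambda\hat\to\omega}Q(\lambda)f_0$, the element $(f_0;g_0')$ belongs to $Q_{\rm op}(\omega)$, and $g_0-g_0'\in\ran\Sigma(\{\omega\})^{\frac12}$.

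For the first summand, \Cref{O26} shows that the constant function $f_1(\lambda)\DE f_0$ is a directional function of $Q$ at $\omega$. By the definition of $Q_{\rm op}(\omega)$, its directional pair is exactly $(f_0;g_0')$.

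For the second summand we use finite dimensionality. Since $\dim\mc G<\infty$, the operator $\Sigma(\{\omega\})$ is a nonnegative matrix, so its range is closed and
\[
\ran\Sigma(\{\omega\})^{\frac12}=\ran\Sigma(\{\omega\}).
\]
This holds because both ranges equal $(\ker\Sigma(\{\omega\}))^\perp$, using $\ker\Sigma(\{\omega\})^{\frac12}=\ker\Sigma(\{\omega\})$. Hence there is $h\in\mc G$ with $g_0-g_0'=\Sigma(\{\omega\})h$. By \Cref{O38}, the function $f_2(\lambda)\DE(\omega-\lambda)h$ is then a directional function of $Q$ at $\omega$ with directional pair $(0;g_0-g_0')$.

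Finally, \Cref{O71} (which uses $Q\in\mc N_0(\mc G)$) shows that $f\DE f_1+f_2$, i.e.\ $f(\lambda)=f_0+(\omega-\lambda)h$, is again a directional function. The defining limits in \eqref{eq:diff} are additive, so its directional pair is
\[
(f_0;g_0')+(0;g_0-g_0')=(f_0;g_0),
\]
and $f$ is a polynomial of degree at most one, as claimed.

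The only point needing care is the range identity $\ran\Sigma(\{\omega\})^{\frac12}=\ran\Sigma(\{\omega\})$. It is where finite dimensionality enters, and it is elementary via the spectral decomposition of the Hermitian matrix $\Sigma(\{\omega\})$. Everything else is a direct assembly of \Cref{O35}, \Cref{O26}, \Cref{O38} and \Cref{O71}.
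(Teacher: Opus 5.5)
Your proof is correct and follows the paper's own argument essentially step by step. The steps are the decomposition via \Cref{O35}, the constant directional function from \Cref{O26}, and the linear one $(\omega-\lambda)h$ from \Cref{O38} after using $\dim\mc G<\infty$ to get $\ran\Sigma(\{\omega\})^{\frac12}=\ran\Sigma(\{\omega\})$, followed by summation via \Cref{O71}. Your explicit justification of that range identity, via $\ker\Sigma(\{\omega\})^{\frac12}=\ker\Sigma(\{\omega\})$ and closedness of ranges in finite dimensions, is a welcome detail that the paper only asserts.
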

\begin{proof}
    We use \Cref{O35} to write $(f_0;g_0)=(f_0;Q_{\text{op}}(\omega)f_0)+(0;\tilde g_0)$, where 
    $\tilde g_0 \in \ran \Sigma (\{\omega \})^{\frac 12}$. According to \Cref{O26}, $\tilde f(\lambda) \DE f_0$ is a directional function of $Q$ at $\omega$, and by definition of $Q_{\text{op}}(\omega)$ it gives the directional pair $(f_0;Q_{\text{op}}(\omega)f_0)$.   
    Since $\dim \mc G<\infty$ we have 
    $\ran \Sigma (\{\omega \})^{\frac 12}=\ran \Sigma (\{\omega \})$, and 
    hence  \Cref{O38} implies that $(0;\tilde g_0)$
    is the directional pair corresponding to a directional function of the form $f(\lambda)=(\omega-\lambda)h$.
       By \Cref{O71}, the sum $f+\tilde f$ is a directional function with  directional pair $(f_0;g_0)$, and by construction it is a polynomial of degree at most one.
\end{proof}

The following example illustrates that the assumption $\dim \mathcal G <\infty$ is not only technical, but necessary.

\begin{example}\label{ex:ell2}
Consider the rational Nevanlinna function $Q(\lambda) \DE -\lambda^{-1} \Sigma_0 $ with some bounded nonnegative operator $\Sigma_0$ in a Hilbert space $\mathcal G$. Then the polynomial ansatz
$$
f(\lambda):= f_0+\lambda h_0
$$
with $f_0$, $h_0\in\mathcal G$ leads to the directional pair $(f_0; \Sigma_0 h_0)$, and the requirement $f_0\in\ker \Sigma_0$. 

This means that only directional pairs {whose} second component belongs to $\ran\Sigma_0$ can be obtained in this way. 
Let us now have a closer look at an even more concrete instance of this example. Let 
    \begin{align*}
        \Sigma_0 \FD{\ell^2(\bb N)}{\ell^2(\bb N)}{0}{(x_n)_{n \in \bb N}}{(\frac{x_n}{n})_{n \in \bb N}}
    \end{align*}
    and consider  $Q(\lambda) = -\lambda^{-1} \Sigma_0 \in \mc N_0(\ell^2(\bb N))$. By construction, the measure in the integral representation of $Q$ is the point measure with mass $\Sigma_0$ at $0$. Hence \Cref{O35} gives
 \begin{align*}
        Q(0)=\{0\} \times \ran \Sigma_0^{\frac 12} &=\{0\} \times \Big\{(y_n)_{n \in \bb N} \DF \sum_{n=1}^\infty n|y_n|^2<\infty \Big\}. 
    \end{align*}
    However,  we have seen above that a polynomial directional function at $0$ can only yield directional pairs of the form $(0;g_0)$, where
    \begin{align*}
        g_0 \in \ran \Sigma_0=\Big\{(y_n)_{n \in \bb N} \DF \sum_{n=1}^\infty n^2|y_n|^2<\infty \Big\} \subsetneq \ran \Sigma_0^{\frac 12}.
    \end{align*}
   Therefore, the requirement $\dim \mc G<\infty$ in \Cref{O97} is necessary. 
   Nonetheless, also for $(0;g_0)$ with $g_0=(g_{0,n})_{n \in \bb N} \in \ran \Sigma_0^{\frac 12} \setminus \ran \Sigma_0$ a directional function must exist. One such function is explicitly given by
    \begin{align*}
        f(\lambda) \DE \bigg(\frac{\lambda n }{1-\lambda n}g_{0,n} \bigg)_{n \in \bb N}.
    \end{align*}
    Note that  this function can be found by following the idea in the proof of \Cref{O33} (B) $\Rightarrow$ (A).
\end{example}

\section{Characterisation of eigenvalues of self-adjoint extensions with exit}
\label{S4}

This section contains our main result, which characterises the eigenvalues of the self-adjoint extension $\wt A$ given by Krein's formula in terms of generalised values of the $Q$-function $m$ and the parameter $\tau$.

We first formulate and prove the main theorem and discuss  then, with  help of several examples, its relation to known results as well as a kind of sharpness.

\subsection{The main result}
In this section we assume that  $S$ is a simple closed symmetric operator in the Pontryagin space $\Pi$, and  fix $A$,  a self-adjoint extension of $S$ in $\Pi$, with defect family $\gamma (\lambda)$   and $Q$-function  $m$. 

\begin{theorem}
\label{O55}
Assume that $(S,A,\gamma)$ and $m$ are given  as above and fix  some $\tau \in \mc N(\mc G)$ for which $m+\tau$ is regular.
 
Let $\wt A$ be a $\Pi$-minimal self-adjoint extension of $S$ in a Pontryagin space $\wt{\Pi}$ containing $\Pi$, such that
\begin{align}
\label{O60}
P_{\Pi}(\wt A-\lambda)^{-1}|_{\Pi}=(A-\lambda)^{-1}-\gamma (\lambda)(m(\lambda)+\tau(\lambda))^{-1}\gamma(\overline{\lambda})^+
\end{align}
holds for all $\lambda \in \rho (A) \cap \mf h((m+\tau)^{-1})$.

Then a point $\alpha \in \bb R$ is an eigenvalue of $\wt A$ if and only if there exist vectors $x_0,y_0\in\mathcal G$ with  $(x_0;y_0)\neq(0;0)$ such that 
\begin{equation}\label{eq:zeroes-main}
   (x_0;y_0)\in m(\alpha) \quad \text{ and } \quad(x_0;-y_0) \in \tau(\alpha). 
\end{equation}
Furthermore, there  exists a linear bijection between $\ker (\wt A-\alpha)$ and the space of all $(x_0;y_0) \in \mc G \oplus \mc G$ satisfying \eqref{eq:zeroes-main}.
\end{theorem}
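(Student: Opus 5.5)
The plan is to reduce the statement to \Cref{le:eigenvalue} applied to the $2n\times 2n$ function from the introduction. Define
\[
M(\lambda)\DE \begin{pmatrix} m(\lambda) & -I \\ -I & -\tau(\lambda)^{-1}\end{pmatrix}\quad\text{or rather work with}\quad \mc M(\lambda)\DE \begin{pmatrix} m(\lambda) & 0\\ 0 & \tau(\lambda)\end{pmatrix}.
\]
The block-diagonal function $\mc M\in\mc N(\mc G\oplus\mc G)$ has the minimal realisation $(\Pi\oplus\Pi_\tau,A\oplus A_\tau,\gamma\oplus\gamma_\tau)$, where $(\Pi_\tau,A_\tau,\gamma_\tau)$ is a minimal realisation of $\tau$. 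Directly from \Cref{O15}, its generalised value is $\mc M(\alpha)=m(\alpha)\times\tau(\alpha)$, i.e.\ $((x;x');(y;y'))\in\mc M(\alpha)$ iff $(x;y)\in m(\alpha)$ and $(x';y')\in\tau(\alpha)$.

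Next apply the Möbius transformation $\mc M\mapsto \mc M+J$ followed by $\wh{\,\cdot\,}$, where $J\DE\begin{pmatrix}0&I\\ I&0\end{pmatrix}$. By \Cref{O75}, $(\wh{\mc M+J})(\alpha)=-(\mc M(\alpha)+J)^{-1}$. Hence $(0;\,u)\in(\wh{\mc M+J})(\alpha)$ with $u\neq0$ iff there is $(f_0;g_0)\in\mc M(\alpha)$ with $g_0+Jf_0=0$ and $f_0\neq 0$. Writing $f_0=(x_0;x_0')$ and $g_0=(y_0;y_0')$, the condition $g_0=-Jf_0$ gives $y_0=-x_0'$ and $y_0'=-x_0$. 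Relabelling turns this into $(x_0;y_0)\in m(\alpha)$ together with a pair in $\tau(\alpha)$ of the form $(-y_0;-x_0)$, equivalently $(y_0;x_0)\in\tau(\alpha)$ up to sign.

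I will choose the sign convention of $J$ (using $-J$ or $\operatorname{diag}(I,-I)$ twists) so that exactly \eqref{eq:zeroes-main} appears. One natural choice is to pair $m$ with $\wh\tau$ instead of $\tau$: by \Cref{O75}(ii), $(x_0;-y_0)\in\tau(\alpha)$ iff $(y_0;x_0)\in\wh\tau(\alpha)$. This matches the structure of \eqref{eq:M}, whose inverse is the transformation of $\operatorname{diag}(m,\wh\tau)$ by the constant $-J$. Regularity of $m+\tau$ ensures that $\mc M+J$ is regular at some $\lambda_0$, via a Schur complement.

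By \Cref{le:eigenvalue}, these nontrivial pairs $(0;u)$ correspond to eigenvectors at $\alpha$ of the representing relation $\wh{A}_J$ of a minimal realisation of $\wh{\mc M+J}$. The linear bijection comes from the uniqueness part of \Cref{O33}, since $v$ is determined by the pair, combined with the injectivity of $v\mapsto u$ in \Cref{le:eigenvalue}. The space of pairs $f_0$ is in bijection with $\{(x_0;y_0)\}$ satisfying \eqref{eq:zeroes-main}, because $g_0=-Jf_0$ is determined by $f_0$.

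The main obstacle is identifying $\wh{A}_J$, in the realisation given by \Cref{O1}, with $\wt A$ up to unitary equivalence. This requires two things. First, the realisation from \Cref{O1} must be minimal; this holds because minimality is preserved by \Cref{O1} and the direct sum realisation of $\mc M$ is minimal, $m$ and $\tau$ being realised minimally. Second, the compressed resolvent of $\wh A_J$ to $\Pi$ must reproduce \eqref{O60}. For the second point, I would compute the $(1,1)$-block of $(\wh A_J-\lambda)^{-1}$ from the formula in \Cref{O1} using the Schur complement of $\mc M(\lambda)+J$. This yields $(A-\lambda)^{-1}-\gamma(\lambda)(m+\tau)^{-1}\gamma(\overline\lambda)^+$, so $\wh A_J$ is a self-adjoint extension of $S$ satisfying Krein's formula. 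Its space is spanned by $\hat\gamma(\lambda)(\mc G\oplus\mc G)$, and I expect this to coincide with the span in \eqref{eq:krein-minimal}, which gives $\Pi$-minimality. The uniqueness clause of the Krein theorem then gives unitary equivalence with $\wt A$, and eigenvalues together with eigenspace dimensions transfer. Checking this span equality, and fixing the signs consistently, is where I expect the real work to lie.
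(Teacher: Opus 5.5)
Your route is the paper's. The function \eqref{eq:M} is the transform $\wh{\,\cdot\,}$ of $\operatorname{diag}(m,\wh\tau)-J$. Its minimal representing relation is identified with $\wt A$; this is \Cref{O57}, taken from Behrndt--Luger, together with uniqueness of $\Pi$-minimal extensions. Then \Cref{le:eigenvalue}, \Cref{O75} and the uniqueness part of \Cref{O33} give the criterion and the bijection. Your computation $(0;u)\in\wh{(\mc M-J)}(\alpha)\iff(u;Ju)\in m(\alpha)\times\wh\tau(\alpha)$, combined with $(y_0;x_0)\in\wh\tau(\alpha)\iff(x_0;-y_0)\in\tau(\alpha)$, is exactly \Cref{O34}.

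There is one missing step. $\wh\tau$ exists only if $\tau$ is regular, but the theorem assumes only that $m+\tau$ is regular; for instance the constant $\tau=0$, a canonical extension, is allowed. Your claim that regularity of $m+\tau$ makes the block function regular presupposes that $\tau(\lambda)$ is invertible. The paper first replaces $(m,\tau)$ by $(m+\sigma I,\tau-\sigma I)$ with real $\sigma>\|\tau(\lambda_0)\|$:
\begin{itemize}
\item $m+\sigma I$ is still a $Q$-function of $(S,A,\gamma)$;
\item $m+\tau$, and hence Krein's formula, is unchanged;
\item by \Cref{O75}(i), $(x_0;y_0)\mapsto(x_0;y_0+\sigma x_0)$ is a linear bijection between the solution sets of \eqref{eq:zeroes-main} for the old and new pair.
\end{itemize}
You need this reduction, or an equivalent one, before your construction applies.

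Two smaller points.

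First, discard the attempt with $\operatorname{diag}(m,\tau)\pm J$ entirely; no choice of signs repairs it. Its Schur complement is $m-\tau^{-1}=m+\wh\tau$, so it realises the extension with Krein parameter $\wh\tau$. The condition $(y_0;x_0)\in\tau(\alpha)$ you derived characterises the eigenvalues of that other extension.

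Second, the step you call the main obstacle can be cited as \Cref{O57}, and redoing it is short. Let $(\Upsilon,\wh T,\hat\gamma')$ be the minimal realisation of $\wh\tau$ obtained via \Cref{O1}, and let $\wh A_J$ act in $\Pi\oplus\Upsilon$.
\begin{itemize}
\item $S\subseteq\wh A_J$ follows from the resolvent formula of \Cref{O1} at $\lambda_0$, since $\gamma(\overline{\lambda_0})^+$ vanishes on $\ran(S-\lambda_0)$.
\item The $\Upsilon$-component of $(\wh A_J-\lambda)^{-1}(f;0)$ is $\hat\gamma'(\lambda)\tau(\lambda)(m(\lambda)+\tau(\lambda))^{-1}\gamma(\overline{\lambda})^+f$.
\item Wherever $\tau(\lambda)$ and $m(\lambda)+\tau(\lambda)$ are invertible, this exhausts $\hat\gamma'(\lambda)\mc G$ as $f$ ranges over $\Pi$.
\end{itemize}
So $\Pi$-minimality follows from minimality of the realisation of $\wh\tau$, with no separate span comparison needed.
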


In view of \Cref{O33} this result can  also equivalently be formulated in terms of directional functions. 
\begin{corollary}\label{cor:main-theorem}
    Let the relation $\wt A$ be given as an \Cref{O55}. Then a point $\alpha \in \bb R$ is an eigenvalue of $\wt A$ if and only if there exist directional functions $x_m$ of $m$ at $\alpha$ and $x_{\tau}$ of $\tau$ at $\alpha$, such that
\begin{align*}
    \lim_{\lambda \hat{\to} \alpha} x_m(\lambda) &=\lim_{\lambda \hat{\to} \alpha} x_{\tau}(\lambda), \\
    \lim_{\lambda \hat{\to} \alpha} m(\lambda)x_m(\lambda) &=-\lim_{\lambda \hat{\to} \alpha} \tau(\lambda) x_{\tau}(\lambda)
\end{align*}
and at least one of these limits is nonzero.
\end{corollary}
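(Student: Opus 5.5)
The plan is to derive the corollary directly from \Cref{O55} by translating both membership conditions in \eqref{eq:zeroes-main} into statements about directional functions, using the equivalence (A)$\Leftrightarrow$(B) of \Cref{O33}. No new analysis is required. \Cref{O33} applies to both $m$ and $\tau$ because both belong to $\mc N(\mc G)$, so each has a minimal realisation and hence a well-defined generalised value at $\alpha$ in the sense of \Cref{O15}. Strictness of $m$ is not needed, since the uniqueness part of \Cref{O33} plays no role here.

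For the direction ``$\Rightarrow$'', suppose $\alpha$ is an eigenvalue of $\wt A$. \Cref{O55} gives $(x_0;y_0)\neq(0;0)$ with $(x_0;y_0)\in m(\alpha)$ and $(x_0;-y_0)\in\tau(\alpha)$.
\begin{itemize}
\item Applying (B)$\Rightarrow$(A) of \Cref{O33} to $m$ yields a directional function $x_m$ of $m$ at $\alpha$ with $x_m(\lambda)\to x_0$ and $m(\lambda)x_m(\lambda)\to y_0$ as $\lambda\hat\to\alpha$.
\item Applying the same implication to $\tau$ yields a directional function $x_\tau$ of $\tau$ at $\alpha$ with $x_\tau(\lambda)\to x_0$ and $\tau(\lambda)x_\tau(\lambda)\to -y_0$.
\end{itemize}
Hence the two displayed limit identities hold, the common values being $x_0$ and $y_0$. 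Since $(x_0;y_0)\neq(0;0)$, at least one of these limits is nonzero.

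For the direction ``$\Leftarrow$'', let $x_m$ and $x_\tau$ be directional functions as in the corollary. Set $x_0:=\lim x_m=\lim x_\tau$ and $y_0:=\lim m\,x_m=-\lim\tau\, x_\tau$. Then $(x_0;y_0)$ is a directional pair of $m$ at $\alpha$, and $(x_0;-y_0)$ is a directional pair of $\tau$ at $\alpha$. By (A)$\Rightarrow$(B) of \Cref{O33}, established via \Cref{O42} together with (C)$\Rightarrow$(B), we get $(x_0;y_0)\in m(\alpha)$ and $(x_0;-y_0)\in\tau(\alpha)$. The assumption that one of the limits is nonzero means exactly $(x_0;y_0)\neq(0;0)$, so \Cref{O55} shows that $\alpha$ is an eigenvalue of $\wt A$.

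The only step requiring care is the bookkeeping: the sign in the second component for $\tau$ must be tracked consistently, and ``at least one of these limits is nonzero'' must be read as $x_0\neq0$ or $y_0\neq0$, which is precisely the nontriviality of the pair in \eqref{eq:zeroes-main}. All substantive work is contained in \Cref{O33} and \Cref{O55}.
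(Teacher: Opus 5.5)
Your proposal is correct and matches the paper's argument: the paper derives the corollary from \Cref{O55} by translating both conditions in \eqref{eq:zeroes-main} into directional functions via the equivalence (A)$\Leftrightarrow$(B) of \Cref{O33}, applied separately to $m$ and to $\tau$. Your tracking of the sign for $\tau$ is correct, as is your reading of ``at least one limit is nonzero'' as $(x_0;y_0)\neq(0;0)$.
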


In the proof of \Cref{O55} we are going to use the same strategy as in \cite{behrndt.luger:2007}. Namely, we first identify the relation $\wt A$ as (unitarily equvalent to)  a minimal representing relation of an  $\mc N(\mc G \oplus \mc G)$-function, which is built from $m$ and $\tau$. Then the particular structure of this function makes it   possible to express its pole vectors in terms of directional pairs of $m$ and $\tau$ individually.

We start by introducing the function.

\begin{definition}
\label{O64}
Let $\tau,m \in \mc N(\mc G)$ and assume that $m+\tau$ and $\tau$ are both regular. We set
\begin{align}
\label{O2}
\begin{split}
\wt M &\DE - \begin{pmatrix}
m & -I_{\mc G} \\
-I_{\mc G} & \wh{\tau}
\end{pmatrix}^{-1} =\begin{pmatrix}
 -(m+\tau)^{-1} & (m+\tau)^{-1}\tau \\
\tau(m+\tau)^{-1} & m(m+\tau)^{-1}\tau
\end{pmatrix}.
\end{split}
\end{align}
\end{definition}

The following proposition is the motivation for this definition, namely allowing to identify $\wt A$ as a minimal representing relation for $\wt M$. For the case of finite defect it is formulated in \cite[Proposition~4.6]{behrndt.luger:2007}, but only minor adjustments are needed to make it work for infinite defect. Namely, in the end of the proof one has to consider the closure of the range of some operator instead of the range itself.

\begin{proposition}
\label{O57}
Let $(S,A,\gamma)$ be as above and let $m$ be a $Q$-function of $(S,A,\gamma)$. Suppose we are given $\tau \in \mc N(\mc G)$ such that $m+\tau$ and $\tau$ are both regular. Then 
\begin{enumerate}
\item The function $\wt M$ introduced in \Cref{O64} belongs to $\mc N(\mc G \oplus \mc G)$.
\item There exists a $\Pi$-minimal self-adjoint extension $\wh{\bb A}$ of $S$ with the following properties:
\begin{itemize}
\item[(i)] The equality
\begin{align}
\label{O7}
P_{\Pi} (\wh{\bb A}-\lambda)^{-1} \big|_{\Pi}=(A-\lambda)^{-1}-\gamma (\lambda) (m(\lambda)+\tau (\lambda))^{-1} \gamma (\overline{\lambda})^+
\end{align}
holds for all $\lambda \in \rho (A) \cap \mf h((m+\tau)^{-1})$.
\item[(ii)] $\wt M$ has a minimal realisation with representing relation $\wh{\bb A}$.
\end{itemize}
\end{enumerate}
\end{proposition}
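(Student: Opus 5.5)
The plan is to reduce everything to \Cref{O1}, applied to a block-diagonal function perturbed by a constant. Set
\[
B(\lambda)\DE\begin{pmatrix} m(\lambda) & -I_{\mc G}\\ -I_{\mc G} & \wh\tau(\lambda)\end{pmatrix}
=\begin{pmatrix} m(\lambda) & 0\\ 0 & \wh\tau(\lambda)\end{pmatrix}+\Theta_0,
\qquad \Theta_0\DE\begin{pmatrix} 0 & -I_{\mc G}\\ -I_{\mc G} & 0\end{pmatrix}.
\]

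\textbf{Part 1: $\wt M\in\mc N(\mc G\oplus\mc G)$.} Since $\tau$ is regular, $\wh\tau\in\mc N(\mc G)$. The kernel of $\operatorname{diag}(m,\wh\tau)$ is the direct sum of the kernels $K_m$ and $K_{\wh\tau}$, so this function has finitely many negative squares and lies in $\mc N(\mc G\oplus\mc G)$. Adding the bounded self-adjoint constant $\Theta_0$ does not change the kernel, so $B\in\mc N(\mc G\oplus\mc G)$.

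For regularity of $B$, fix a point where $\tau$, $\wh\tau$ and $(m+\tau)^{-1}$ are all bounded. The Schur complement of the invertible block $\wh\tau=-\tau^{-1}$ is $m-\wh\tau^{\,-1}=m+\tau$, which is boundedly invertible there. Hence $B$ is regular, and \Cref{O1} gives $\wt M=-B^{-1}\in\mc N(\mc G\oplus\mc G)$. The explicit block form in \eqref{O2} follows from the standard inversion formula for $2\times2$ operator matrices via the Schur complement; this is a routine check.

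\textbf{Part 2, construction of $\wh{\bb A}$.} Let $(\Pi,A,\gamma)$ be the given minimal realisation of $m$, and let $(\Pi_\tau,A_\tau,\gamma_\tau)$ be a minimal realisation of $\wh\tau$. Then
\[
(\Pi\oplus\Pi_\tau,\;A\oplus A_\tau,\;\gamma\oplus\gamma_\tau)
\]
is a realisation of $\operatorname{diag}(m,\wh\tau)$, hence also of $B$ (only the constant changes). It is minimal, because the generating vectors $\gamma(\lambda)x\oplus 0$ and $0\oplus\gamma_\tau(\lambda)y$ are obtained by taking one component zero. Apply \Cref{O1} to $B$ and define $\wh{\bb A}$ by \eqref{O24}. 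This yields a minimal realisation of $\wt M$ with representing relation $\wh{\bb A}$, together with the identity
\[
(\wh{\bb A}-\lambda)^{-1}=(A\oplus A_\tau-\lambda)^{-1}+\Gamma(\lambda)\,\wt M(\lambda)\,\Gamma(\overline\lambda)^+,
\qquad \Gamma(\lambda)\DE\gamma(\lambda)\oplus\gamma_\tau(\lambda).
\]
Compressing to $\Pi$ picks out the $(1,1)$ entry $\wt M_{11}=-(m+\tau)^{-1}$. This gives \eqref{O7} on the common domain of holomorphy, and then on all of $\rho(A)\cap\mf h((m+\tau)^{-1})$ by analytic continuation.

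\textbf{Part 2, $\wh{\bb A}$ extends $S$.} We have $\ran\gamma(\overline\lambda)=\ker(S^+-\overline\lambda)$. Hence $\gamma(\overline\lambda)^+$, extended by zero on $\Pi_\tau$, annihilates $\ran(S-\lambda)\oplus\{0\}$. On that subspace $(\wh{\bb A}-\lambda)^{-1}$ therefore agrees with $(A-\lambda)^{-1}$, and since $S\subseteq A$ this gives $S\subseteq\wh{\bb A}$.

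\textbf{Part 2, $\Pi$-minimality (main obstacle).} Minimality of the realisation of $\wt M$ says that the vectors $\hat\gamma(\lambda)(x;y)=\Gamma(\lambda)\wt M(\lambda)(x;y)$ span a dense set. What must be shown is that everything is generated by $\Pi$ alone, i.e.\ that $\wt\Pi$ is the closed span of $\Pi$ and $(\wh{\bb A}-\lambda)^{-1}\Pi$.

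I would argue through orthogonal complements. Suppose $h\in\wt\Pi$ is orthogonal to $\Pi$ and to all $(\wh{\bb A}-\lambda)^{-1}\gamma x$ with $x\in\mc G$. By the resolvent identity above, the vector $(\wh{\bb A}-\lambda)^{-1}(\gamma x\oplus 0)$ differs from $(A-\lambda)^{-1}\gamma x$ only by a term of the form $\Gamma(\lambda)\wt M(\lambda)(z;0)$ with
\[
z=\gamma(\overline\lambda)^+\gamma x .
\]
Thus $h$ is orthogonal to $\Gamma(\lambda)\wt M(\lambda)(z;0)$ for all such $z$. By strictness of $m$ and the defect-space identities, these $z$ range over a dense subspace of $\mc G$.

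It remains to handle the second component $(0;y)$. Using the block formula, $\wt M(\lambda)(0;y)$ involves $(m+\tau)^{-1}\tau y$. I would compare it with $\wt M(\lambda)(z;0)$ for $z=-\tau(\lambda)y$: both have the same second entry, and their first entries differ by a term coming from $\Gamma(\lambda)(\cdot\,;0)$, which lies in the span already controlled through $\Pi$. Since $\tau(\lambda)$ is invertible at generic points, such vectors $z=-\tau(\lambda)y$ exhaust $\mc G$.

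For infinite defect, the ranges above are only dense. This is where closures of ranges are needed instead of the ranges themselves, exactly the adjustment to \cite[Proposition~4.6]{behrndt.luger:2007} mentioned before the statement. I expect this density bookkeeping to be the delicate step. Once it is settled, $h\perp\hat\gamma(\lambda)(x;y)$ for all $x,y$, so $h=0$ by minimality of the realisation of $\wt M$.
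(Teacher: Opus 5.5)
Your construction of $\wh{\bb A}$ (via \Cref{O1} applied to $\operatorname{diag}(m,\wh\tau)+\Theta_0$ on $\Pi\oplus\Pi_\tau$), Part~1, the compression to $\Pi$, and the inclusion $S\subseteq\wh{\bb A}$ are fine. For the continuation step, note that $\rho(A)\cap\mf h((m+\tau)^{-1})\subseteq\mf h(\wt M)=\rho(\wh{\bb A})$. This holds because the entries of $\wt M$ equal $-(m+\tau)^{-1}$, $I-(m+\tau)^{-1}m$, $I-m(m+\tau)^{-1}$ and $m-m(m+\tau)^{-1}m$.

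The gap is in the $\Pi$-minimality step. It is not density bookkeeping: two of your claims there are false.

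\emph{The comparison with $z=-\tau(\lambda)y$ is miscomputed.} It is the first entries that coincide, and
\[
\wt M(\lambda)\binom{0}{y}-\wt M(\lambda)\binom{-\tau(\lambda)y}{0}=\binom{0}{\tau(\lambda)y}.
\]
After applying $\Gamma(\lambda)$, the difference is $(0,\gamma_\tau(\lambda)\tau(\lambda)y)\in\{0\}\oplus\Pi_\tau$. This is exactly the part of $\wt\Pi$ not controlled by $\Pi$. Since $h\perp\Pi$ means $h=(0,h_\tau)$, orthogonality of $h$ to these vectors for all $\lambda,y$ is equivalent to $h_\tau=0$, which is the statement you want to prove. (One would have to choose $z$ so that the second entries agree instead.)

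\emph{The density claim for $z=\gamma(\overline\lambda)^+\gamma x=K_m(\lambda_0,\overline\lambda)x$ fails.} It does not follow from strictness, which only says that the kernels of $K_m(\cdot,\mu)$ have trivial intersection. It can fail on a whole half-plane: for $m=Q_3$ (cf.\ \Cref{ex:i}) one has $K_m(\lambda_0,\overline\lambda)=0$ for all $\lambda\in\bb C^+$. This is not harmless, because your test set is then genuinely too small. Take in addition $\tau\equiv i$ on $\bb C^+$. Then $\wh\tau\equiv i$ there, and $K_{\wh\tau}(\lambda,\mu)=0$ for $\lambda\in\bb C^-$, $\mu\in\bb C^+$. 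Now take $h=(0,\gamma_\tau(\mu)1)\neq0$ with $\mu\in\bb C^+$. It is orthogonal to $\Pi$ and to every $(\wh{\bb A}-\lambda)^{-1}(\gamma x,0)$:
\begin{itemize}
\item for $\lambda\in\bb C^+$ these vectors lie in $\Pi\oplus\{0\}$;
\item for $\lambda\in\bb C^-$ their $\Pi_\tau$-component lies in $\gamma_\tau(\lambda)\mc G$.
\end{itemize}

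\emph{The repair.} Test against $(\wh{\bb A}-\lambda)^{-1}(u,0)$ for all $u\in\Pi$, as your own definition of $\Pi$-minimality permits. For $h=(0,h_\tau)$ this gives
\[
\big[\gamma_\tau(\lambda)\tau(\lambda)\big(m(\lambda)+\tau(\lambda)\big)^{-1}\gamma(\overline\lambda)^+u,\,h_\tau\big]=0,\qquad u\in\Pi .
\]
Here $\gamma(\overline\lambda)^+$ has dense range because $\ker\gamma(\overline\lambda)=\{0\}$. Only density is needed, which is the kind of adjustment for infinite defect the paper alludes to. Moreover, $\tau(\lambda)(m(\lambda)+\tau(\lambda))^{-1}$ is bijective for all but finitely many nonreal $\lambda$. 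Hence $h_\tau\perp\gamma_\tau(\lambda)\mc G$ for such $\lambda$ in both half-planes, and minimality of the realisation of $\wh\tau$ gives $h_\tau=0$. Neither the $(0;y)$-component nor the minimality of the realisation of $\wt M$ is needed.
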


Having provided the necessary tools, we can prove \Cref{O55}.

\begin{proof}[{Proof of \Cref{O55}}]
It is not a restriction to assume that $\tau$ is regular -- otherwise we can choose $\sigma \in \bb R$, $\sigma > \|\tau (\lambda_0)\|$ and replace $m$ by $m+\sigma I_{\mc G}$ and $\tau$ by $\tau-\sigma I_{\mc G}$. This keeps $m+\tau$ invariant and does not lead to any changes in Krein's formula, while \Cref{O75} (i) shows that the validity of the criterion in terms of $m(\alpha)$ and $\tau(\alpha)$ is unaffected. Hence, from now on we assume that $\tau$ is regular. Then the function $\wt M$ from \Cref{O64} and its minimal representing relation $\wh{\bb A}$ are available to us.

By comparing \eqref{O60} and \eqref{O7} we see that the compressed resolvents of $\wt A$ and $\wh{\bb A}$ coincide:
\begin{align}
\label{O77}
P_{\Pi}(\wt A-\lambda)^{-1}|_{\Pi}=P_{\Pi}(\wh{\bb A}-\lambda)^{-1}|_{\Pi}.
\end{align}
Both $\wt A$ and $\wh{\bb A}$ are $\Pi$-minimal extensions of $S$, which  together with \eqref{O77} implies that they are unitarily equivalent:
In the Hilbert space setting, this statement can be found in \cite[Theorem 4.2.3.]{behrndt.hassi.snoo:2020}. The same proof works in the Pontryagin space setting, because isometries with dense domain and dense range can still be continued to unitary operators \cite[Corollary 1 of Theorem 6.3]{iohvidov.krein.langer:1982}. Hence, $\wt A$ is also a minimal representing relation for $\wt M$. This enables us to apply   \Cref{le:eigenvalue}  to characterise its eigenvalues. Namely,  a point $\alpha \in \bb R$ is an eigenvalue of $\wt A$ if and only if $\wt M(\alpha)$ contains an element of the form $(0;{g_0})$ with ${g_0}=\binom{x_0}{y_0} \in \mc G \oplus \mc G \setminus \{0\}$. 

What remains is to rewrite this in terms of $m$ and $\tau$. Since this statement is of independent interest, we formulate it as a lemma.

\begin{lemma}
\label{O34}
Let $\tau,m \in \mc N(\mc G)$ and assume that $m+\tau$ and $\tau$ are regular. Define $\wt M$ by \eqref{O2} and let $\alpha \in \bb R$, $x_0,y_0 \in \mc G$. Then the following statements are equivalent:
\begin{Enumerate}
\item $\big( \binom 00;\binom{x_0}{y_0} \big) \in \wt M(\alpha)$.
\item $(x_0;y_0) \in m(\alpha)$ and $(x_0;-y_0) \in \tau (\alpha)$.
\end{Enumerate}
\end{lemma}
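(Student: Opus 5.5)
The plan is to recognise $\wt M$ as a M\"obius transform of a block-diagonal function and then use the transformation rules of \Cref{O75}. Write
\begin{align*}
B(\lambda)\DE\begin{pmatrix} m(\lambda) & -I_{\mc G}\\ -I_{\mc G} & \wh\tau(\lambda)\end{pmatrix}=D(\lambda)+\Theta,\qquad D\DE\begin{pmatrix} m&0\\0&\wh\tau\end{pmatrix},\quad \Theta\DE\begin{pmatrix}0&-I_{\mc G}\\-I_{\mc G}&0\end{pmatrix}.
\end{align*}
Then $\Theta$ is a bounded self-adjoint constant on $\mc G\oplus\mc G$, and $\wt M=\wh B$.

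First I check that $B\in\mc N(\mc G\oplus\mc G)$ and that $B$ is regular, so that \Cref{O75} applies. Membership follows since $D\in\mc N(\mc G\oplus\mc G)$ (its kernel is the direct sum of the kernels of $m$ and $\wh\tau$) and $\Theta$ is a self-adjoint constant. For regularity, take $\lambda_0$ with $\tau(\lambda_0)$ and $m(\lambda_0)+\tau(\lambda_0)$ both boundedly invertible. Since $\wh\tau(\lambda_0)$ is invertible, the Schur complement of $B(\lambda_0)$ is
\begin{align*}
m(\lambda_0)-\wh\tau(\lambda_0)^{-1}=m(\lambda_0)+\tau(\lambda_0),
\end{align*}
which is invertible. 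Hence $B(\lambda_0)$ is boundedly invertible, and this is exactly the computation behind \eqref{O2}.

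Second, I show that the generalised value of $D$ splits as $D(\alpha)=m(\alpha)\oplus\wh\tau(\alpha)$, meaning $\{((a;b);(c;d)) : (a;c)\in m(\alpha),\,(b;d)\in\wh\tau(\alpha)\}$. For this I take minimal realisations $(\Pi_1,A_1,\gamma_1)$ of $m$ and $(\Pi_2,A_2,\gamma_2)$ of $\wh\tau$ with a common point $\lambda_0$, and use $(\Pi_1[\dotplus]\Pi_2,\,A_1\oplus A_2,\,\gamma_1\oplus\gamma_2)$ for $D$. This realisation represents $D$. It is minimal because $\gamma(\lambda)(x\oplus 0)$ and $\gamma(\lambda)(0\oplus y)$ span the two summands separately. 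Now characterisation (C) of \Cref{O33} decouples componentwise: an element $v=v_1+v_2$ satisfies \eqref{eq:f} and \eqref{eq:g} for $D$ if and only if $v_1$ and $v_2$ satisfy them for $m$ and $\wh\tau$ respectively. Since \Cref{O91} shows $D(\alpha)$ is independent of the minimal realisation, the splitting follows.

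I expect this second step to be the main technical point. The subtlety is that the direct-sum realisation must use the same $\lambda_0$ in both components. I will handle it by noting that the relation in \eqref{O76} does not depend on the choice of $\lambda_0$, which is a consequence of \eqref{O62}. An alternative would be to argue with directional pairs, but that is worse in the indefinite case: boundedness of the sum of the two kernel terms in \eqref{O37} need not give boundedness of each term separately. For that reason I prefer the realisation route.

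Third, I compute. By \Cref{O75}(ii) applied to $B$, and then \Cref{O75}(i) applied to $D+\Theta$,
\begin{align*}
\wt M(\alpha)=-B(\alpha)^{-1},\qquad B(\alpha)=D(\alpha)+\Theta .
\end{align*}
From the first identity, $\big(\binom00;\binom{x_0}{y_0}\big)\in\wt M(\alpha)$ holds if and only if $\big(\binom{x_0}{y_0};\binom00\big)\in B(\alpha)$. By the second identity and the splitting of $D(\alpha)$, this means there are $a,b\in\mc G$ with $(x_0;a)\in m(\alpha)$, $(y_0;b)\in\wh\tau(\alpha)$ and $a-y_0=0$, $b-x_0=0$. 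So the condition is $(x_0;y_0)\in m(\alpha)$ together with $(y_0;x_0)\in\wh\tau(\alpha)$.

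Finally I translate the second condition back to $\tau$. Applying \Cref{O75}(ii) to the regular function $\tau$ gives $\wh\tau(\alpha)=-\tau(\alpha)^{-1}$. Hence $(y_0;x_0)\in\wh\tau(\alpha)$ is equivalent to $(-x_0;y_0)\in\tau(\alpha)$. By linearity of the relation $\tau(\alpha)$ this is the same as $(x_0;-y_0)\in\tau(\alpha)$, which gives the equivalence of (i) and (ii).
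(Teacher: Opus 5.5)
Your proof is correct and follows the paper's route. Both arguments pass to $\wh{\wt M}$ via \Cref{O75}(ii), split it as $\mathrm{diag}(m,\wh\tau)+\Theta$, and decouple characterisation (C) of \Cref{O33} with the direct-sum minimal realisation. Both then return to $\tau$ via \Cref{O75}(ii). The only difference is bookkeeping: you first remove $\Theta$ with \Cref{O75}(i) and prove $D(\alpha)=m(\alpha)\oplus\wh\tau(\alpha)$, whereas the paper keeps $\Theta$ inside $\wh{\wt M}(\lambda_0)^*$ and writes out (C) line by line.
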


\begin{proof}
According to \Cref{O75} (ii),
\begin{align}
\label{O79}
    \bigg(\binom 00;\binom{x_0}{y_0} \bigg) \in \wt M(\alpha) \Leftrightarrow \bigg(\binom{x_0}{y_0}; \binom 00 \bigg) \in \wh{\wt M}(\alpha).
\end{align}
Here the function $\wh{\wt M}$ can be expressed as
\begin{align}
\label{O78}
\wh{\wt  M} &= \begin{pmatrix}
m & -I_{\mc G} \\
-I_{\mc G} & \wh{\tau}
\end{pmatrix}
=
\begin{pmatrix}
m & 0 \\
0 & \wh{\tau}
\end{pmatrix}
+
\begin{pmatrix}
0 & -I_{\mc G} \\
-I_{\mc G} & 0
\end{pmatrix}
.
\end{align}
Consider minimal realisations $(\Pi,A,\gamma)$ and $(\Upsilon,T,\gamma')$ of $m$ and $\tau$, respectively. Further let $(\Upsilon,\wh{T},\hat{\gamma}')$ be the minimal realisation of $\wh \tau$ given by \Cref{O1}. From \eqref{O78} we see that 
\[
\bigg(\Pi \times \Upsilon,
\begin{pmatrix}
    A & 0 \\
    0 & \wh T
\end{pmatrix}
,
\begin{pmatrix}
    \gamma & 0 \\
    0 & \hat{\gamma}'
\end{pmatrix} \bigg)
\]
is a minimal realisation of $\wh {\wt M}$. 

\Cref{O33} states that (the right-hand side of) \eqref{O79} is equivalent to the existence of a vector ${\bf v}=\binom{{v}}{{v} '} \in \Pi \times \Upsilon$ such that corresponding versions of \eqref{eq:f}, \eqref{eq:g} hold, that is,
\begin{align}
\begin{pmatrix}
    \gamma & 0 \\
    0 & \hat{\gamma}'
\end{pmatrix}
\binom{x_0}{y_0} 
&=
\bigg(I_{\Pi \times \Upsilon}+(\lambda_0-\alpha)\bigg(\begin{pmatrix}
    A & 0 \\
    0 & \wh T
\end{pmatrix}-\lambda_0 I_{\Pi \times \Upsilon} \bigg)^{-1} \bigg) \bf {v}, \\
\binom 00 &= \wh {\wt M}(\lambda_0)^* \binom{x_0}{y_0} +(\alpha-\overline{\lambda_0}) 
\begin{pmatrix}
    \gamma & 0 \\
    0 & \hat{\gamma}'
\end{pmatrix}^+ \bf {v}.
\end{align}
Writing out these equalities line by line we get 
\begin{align*}
\gamma x_0 &=\big(I_\Pi+(\lambda_0-\alpha)(A-\lambda_0)^{-1} \big){v}, \\
\hat{\gamma}' y_0 &=\big(I_\Upsilon+(\lambda_0-\alpha)(\wh T-\lambda_0)^{-1} \big){v}', \\
0 &= m(\lambda_0)^* x_0-y_0 +(\alpha-\overline{\lambda_0}) \gamma^+ {v}, \\
0 &= -x_0+\wh{\tau}(\lambda_0)^* y_0 +(\alpha-\overline{\lambda_0}) (\hat{\gamma}')^+ {v}'.
\end{align*}
Equivalently, we have
\begin{align}
\label{O80}
\begin{split}
\gamma x_0 &=\big(I_\Pi+(\lambda_0-\alpha)(A-\lambda_0)^{-1} \big){v}, \\
y_0 &= m(\lambda_0)^* x_0 +(\alpha-\overline{\lambda_0}) \gamma^+ {v},  
\end{split}
\\[1ex]
\label{O81}
\begin{split}
\hat{\gamma}' y_0 &=\big(I_\Upsilon+(\lambda_0-\alpha)(\wh T-\lambda_0)^{-1} \big){v}', \\
x_0 &= \wh{\tau}(\lambda_0)^* y_0 +(\alpha-\overline{\lambda_0}) (\hat{\gamma}')^+ {v}'.
\end{split}
\end{align}
Applying \Cref{O33} yields that \eqref{O80} is equivalent to $(x_0;y_0) \in m(\alpha)$. Analogously, \eqref{O81} is equivalent to $(y_0;x_0) \in \wh{\tau} (\alpha)$, and according to \Cref{O75} this holds if and only if $(x_0;-y_0) \in \tau (\alpha)$. 
\end{proof}

This finishes also the proof of the characterisation in \Cref{O55}. To prove the claimed one-to-one correspondence, in view of \Cref{O34} it is enough to find a linear bijection between $\ker (\wt A-\alpha)$ and the space of all $g_0=\binom{x_0}{y_0} \in \mc G \oplus \mc G$ for which $(0;g_0) \in \wt M(\alpha)$. With the notation from the proof of \Cref{O34}, define 
\begin{align}
    \Psi \FD{\ker (\wt A-\alpha)}{\mc G \oplus \mc G}{0}{{\bf v}}{(\alpha-\overline{\lambda_0})\begin{pmatrix}
        \gamma & 0 \\
        0 & \hat{\gamma}'
    \end{pmatrix}
    {\bf v}
    }
    .
\end{align}
According to \Cref{le:eigenvalue} the image of $\Psi$ equals $\{g_0 \DS (0;g_0) \in \wt M(\alpha)\}$. Injectivity of $\Psi$ follows from the uniqueness statement in \Cref{O33}.

\end{proof} 

Note that \Cref{O34} actually holds  more generally. To show this,  basically the same proof works. 
 \begin{corollary}
Let $\tau,m \in \mc N(\mc G)$ and assume that $m+\tau$ and $\tau$ are regular. Define $\wt M$ by \eqref{O2} and let $\alpha \in \bb R$, $a_0,b_0, x_0,y_0 \in \mc G$. Then the following statements are equivalent:
\begin{Enumerate}
\item $\big( \binom{a_0}{b_0};\binom{x_0}{y_0} \big) \in \wt M(\alpha)$.
\item $(x_0;y_0+a_0) \in m(\alpha)$ and $(x_0+b_0;-y_0) \in \tau (\alpha)$.
\end{Enumerate}
\end{corollary}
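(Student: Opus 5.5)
The plan is to repeat the proof of \Cref{O34} with nonzero first components. The only new work is tracking the constants $a_0,b_0$ through the two M\"obius-type operations of \Cref{O75}, and then splitting along the block-diagonal realisation.

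\textbf{Step 1: pass to $\wh{\wt M}$.} By \Cref{O75}~(ii), applied to the regular function $\wt M\in\mc N(\mc G\oplus\mc G)$, we have $\wh{\wt M}(\alpha)=-\wt M(\alpha)^{-1}$. So $\big(\binom{a_0}{b_0};\binom{x_0}{y_0}\big)\in\wt M(\alpha)$ if and only if $\big(-\binom{x_0}{y_0};\binom{a_0}{b_0}\big)\in\wh{\wt M}(\alpha)$. Since $\wh{\wt M}(\alpha)$ is a linear relation, this is the same as $\big(\binom{x_0}{y_0};-\binom{a_0}{b_0}\big)\in\wh{\wt M}(\alpha)$.

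\textbf{Step 2: remove the constant.} Use the decomposition \eqref{O78}, $\wh{\wt M}=\operatorname{diag}(m,\wh\tau)+\Theta$, where $\Theta=\begin{pmatrix}0&-I_{\mc G}\\-I_{\mc G}&0\end{pmatrix}$ is a bounded self-adjoint constant. By \Cref{O75}~(i), the condition is equivalent to
\[
\Big(\binom{x_0}{y_0};\,-\binom{a_0}{b_0}-\Theta\binom{x_0}{y_0}\Big)\in\operatorname{diag}(m,\wh\tau)(\alpha).
\]

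\textbf{Step 3: split along the diagonal.} As in the proof of \Cref{O34}, $\operatorname{diag}(m,\wh\tau)$ has the minimal realisation $\big(\Pi\times\Upsilon,\operatorname{diag}(A,\wh T),\operatorname{diag}(\gamma,\hat\gamma')\big)$. Condition (C) of \Cref{O33} for this realisation decouples line by line into two independent systems, one for $v\in\Pi$ and one for $v'\in\Upsilon$. Applying \Cref{O33} to each system separately gives a membership in $m(\alpha)$ with first entry $x_0$, and a membership in $\wh\tau(\alpha)$ with first entry $y_0$. Each second entry is $y_0$ or $x_0$, shifted by $a_0$ or $b_0$ respectively. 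A final use of \Cref{O75}~(ii), now for the regular function $\tau$, turns the $\wh\tau(\alpha)$-statement into a statement about $\tau(\alpha)$ with the sign flip $(u;w)\mapsto(-w;u)$. Combining these should give $(x_0;y_0+a_0)\in m(\alpha)$ and $(x_0+b_0;-y_0)\in\tau(\alpha)$, which is the claimed form of (ii). Every step is an equivalence, so (i)$\Leftrightarrow$(ii) follows.

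\textbf{Main obstacle.} The only real difficulty is the sign bookkeeping in Steps 1--3. There are three orientation choices, and each flips a sign:
\begin{itemize}
\item which of $\pm\binom{x_0}{y_0}$ is taken as first component after inverting in Step 1;
\item the orientation in $\wh Q(\omega)=-Q(\omega)^{-1}=\{(-g_0;f_0)\}$;
\item the sign of $\Theta$ in \eqref{O78}.
\end{itemize}
At this stage I would recompute the convention of \Cref{O75}~(ii) directly from \eqref{O72}--\eqref{O73}. I would then check the result against the case $a_0=b_0=0$, where \Cref{O34} must be recovered. As a further test I would take constant $m$ and $\tau$, where everything is ordinary matrix algebra, and confirm that the signs of $a_0,b_0$ come out exactly as stated. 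If this check produced the opposite sign, that would indicate a sign discrepancy in the statement as printed rather than a failure of the method, since the chain of equivalences itself does not depend on the signs.
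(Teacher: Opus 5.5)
Your route is the one the paper intends (it only says the proof of Lemma~\ref{O34} carries over), and Steps~1--3 are sound. The step that fails is the last sentence of Step~3, where you assert that combining gives $(x_0;y_0+a_0)\in m(\alpha)$ and $(x_0+b_0;-y_0)\in\tau(\alpha)$. Your own Steps~1--2 give the opposite signs. Since $\Theta\binom{x_0}{y_0}=-\binom{y_0}{x_0}$, Step~2 reads
\[
\Big(\binom{x_0}{y_0};\binom{y_0-a_0}{x_0-b_0}\Big)\in\operatorname{diag}(m,\wh\tau)(\alpha).
\]
So Step~3 yields $(x_0;y_0-a_0)\in m(\alpha)$ and $(y_0;x_0-b_0)\in\wh\tau(\alpha)$, i.e.\ $(x_0-b_0;-y_0)\in\tau(\alpha)$.

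No bookkeeping can rescue the printed signs, because the statement as printed is false. Take $\mc G=\bb C$ and $m\equiv\tau\equiv 1$, or $m(\lambda)=\tau(\lambda)=\lambda$ at $\alpha=1$. Then all generalised values at $\alpha$ are graphs of ordinary values, and $\wt M(\alpha)$ is the graph of $\begin{pmatrix}-\frac12&\frac12\\ \frac12&\frac12\end{pmatrix}$. With $a_0=1$, $b_0=0$ you get $x_0=-\frac12$, $y_0=\frac12$, so (i) holds while $(x_0;y_0+a_0)=(-\frac12;\frac32)\notin m(\alpha)$. With $a_0=0$, $b_0=1$ you get $x_0=y_0=\frac12$, and $(x_0+b_0;-y_0)=(\frac32;-\frac12)\notin\tau(\alpha)$.

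The correct equivalence is: (i) holds if and only if $(x_0;y_0-a_0)\in m(\alpha)$ and $(x_0-b_0;-y_0)\in\tau(\alpha)$. Equivalently, the printed (ii) characterises $\big(-\binom{a_0}{b_0};\binom{x_0}{y_0}\big)\in\wt M(\alpha)$. Your Steps~1--3 prove exactly this corrected version.

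The printed signs are what one gets by copying \eqref{O79} literally, i.e.\ passing from $\wt M(\alpha)$ to $\wh{\wt M}(\alpha)$ by merely swapping components. That is harmless in Lemma~\ref{O34}, where $a_0=b_0=0$, but here the minus sign in $\wh Q(\omega)=-Q(\omega)^{-1}$ matters. Your proposed sanity check with constant $m,\tau$ was the right instinct. You should have run that one-line computation rather than asserting that the signs come out as printed.
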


\subsection{Discussion of the main result}
\label{S42}

In \Cref{O55} eigenvalues of $\wt A$ are characterised through condition \eqref{eq:zeroes-main}. We can rephrase this in the following way: if $x_0 $ is fixed then \eqref{eq:zeroes-main} holds for some $y_0$ if and only if
\begin{align}
\label{eq:ker-sum-values}
    (x_0;0)\in m(\alpha) +\tau(\alpha). 
\end{align}

It might seem more natural to try to write this condition in terms of $(m+\tau)(\alpha)$, emphasizing the connection to Krein's formula \eqref{O60}. So a natural question is:
$$  
\text{Does } m(\alpha) + \tau(\alpha) \text{ coincide with } (m + \tau)(\alpha)\text{?}
$$
The answer is negative, in general, as the following simple example illustrates. 
 
\begin{example}
\label{O13}
 Let  $m(\lambda)=1-\frac1\lambda$   and $\tau(\lambda)=\frac1\lambda$. Here obviously for $\omega=0$ the element $(0;1)$ is a directional pair for both the $\mathcal N_0$-function $m$ and the $\mathcal N_1$-function $\tau$ and hence it holds 
 \begin{align*}
     (0;1)\in m(0)+\tau(0). 
 \end{align*}
However, since $m+\tau $ is the constant function $1$ we have
\begin{align*}
     (0;1)\not\in (m+\tau)(0). 
 \end{align*}
\end{example}

Let us have a closer look at the criterion \eqref{eq:zeroes-main} in \Cref{O55}. It splits  into two alternatives:

 \begin{itemize}
     \item If $x_0=0$, then $\alpha$ is a generalised pole of both $m$  and $\tau$ with the same pole vector $y_0$. 
     \item  If $x_0\neq0$, then $\ker\big(m(\alpha)+\tau(\alpha)\big)$ is nontrivial, cf. \eqref{eq:ker-sum-values}.
 \end{itemize}

Recall that in the scalar case ($\dim \mc G=1$)  the following criterion for a point $\alpha$ to be an eigenvalue of $\wt A$ was given  in \cite[Theorem~4.1]{behrndt.luger:2007}:  either $\alpha$ is a generalised pole of both $m$ and $\tau$ or $\alpha$ is a generalised zero of $m+\tau$. Note that this reflects the two alternatives above, where, notably, the second one  simplifies.

Another situation that allows for this kind of simplification is the Hilbert space setting, which will be discussed in the next subsection.

Aside from these two special cases, trying to simplify the criterion \eqref{eq:zeroes-main} is a delicate task, as cancellation as in \Cref{O13} is possible and  a point can be both a generalised zero and generalised pole. That is, both the kernel and the multi-valued part of the generalised value at this point  can be nontrivial.

\subsection{A simplification in the definite situation}\label{sec:def}

The eigenvalue criterion in \Cref{O55} can be given a more natural form if all involved operators act in Hilbert spaces. Note the similarity of the following result with \cite[Theorem~4.1]{behrndt.luger:2007}.

\begin{theorem}
\label{O52}
Let $S$ be a simple closed symmetric operator in the Hilbert space $\mc H$. Suppose that $A$ is a self-adjoint extension of $S$ in $\mc H$, that $\gamma (\lambda)$ is a defect family and that $m$ is a $Q$-function of $(S,A,\gamma)$. Furthermore, let $\tau \in \mc N_0(\mc G)$ be such that $m+\tau$ is regular.

Let $\wt A$ be an $\mc H$-minimal self-adjoint extension of $S$ in a Hilbert space $\wt{\mc H}$ containing $\mc H$, such that
\begin{align}
\label{O63}
P_{\mc H}(\wt A-\lambda)^{-1}|_{\mc H}=(A-\lambda)^{-1}-\gamma (\lambda)(m(\lambda)+\tau(\lambda))^{-1}\gamma(\overline{\lambda})^+
\end{align}
holds for all $\lambda \in \bb C \setminus \bb R$.

Then a point $\alpha \in \bb R$ is an eigenvalue of $\wt A$ if and only if it is either a generalised zero of $m+\tau$, or a generalised pole of both $m$ and $\tau$ with a common pole vector.

Furthermore, $\ker (m+\tau)(\alpha)$ is orthogonal to $\mul m(\alpha) \cap \mul \tau(\alpha)$, and there exists a linear bijection
\begin{align*}
    \wt \Psi : \, \ker (\wt A-\alpha) \to \ker (m+\tau)(\alpha) \oplus \big(\mul m(\alpha) \cap \mul \tau (\alpha) \big).
\end{align*}
\end{theorem}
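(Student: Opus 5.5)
The plan is to deduce the statement from \Cref{O55}, using the explicit description of generalised values of $\mc N_0$-functions in \Cref{O35}. Since $\mc H$ and $\wt{\mc H}$ are Hilbert spaces, $m\in\mc N_0(\mc G)$, and by assumption $\tau\in\mc N_0(\mc G)$. Hence $m+\tau\in\mc N_0(\mc G)$ as well. Writing $\Sigma_m,\Sigma_\tau$ for the measures in the integral representations \eqref{O96}, the measure of $m+\tau$ is $\Sigma_m+\Sigma_\tau$. By \Cref{O55}, whose hypotheses hold since \eqref{O63} holds on all of $\bb C\setminus\bb R$, $\ker(\wt A-\alpha)$ is linearly isomorphic to
\[
W\DE\{(x_0;y_0) \DS (x_0;y_0)\in m(\alpha),\ (x_0;-y_0)\in\tau(\alpha)\}.
\]
It therefore suffices to construct a linear bijection $W\to\ker(m+\tau)(\alpha)\oplus\big(\mul m(\alpha)\cap\mul\tau(\alpha)\big)$. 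The eigenvalue criterion then follows, because $W\neq\{0\}$ if and only if one of the two summands is nontrivial. Here a nontrivial kernel of $(m+\tau)(\alpha)$ means that $\alpha$ is a generalised zero of $m+\tau$, and a nonzero element of $\mul m(\alpha)\cap\mul\tau(\alpha)$ is a common pole vector (\Cref{le:eigenvalue}, \Cref{re:earlier}).

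First I record consequences of \Cref{O26} and \Cref{O35}. Since $\int \DD(\Sigma(t)f_0,f_0)/(t-\alpha)^2<\infty$ forces $(\Sigma(\{\alpha\})f_0,f_0)=0$, every $f_0\in\dom Q(\alpha)$ satisfies $\Sigma(\{\alpha\})f_0=0$. Hence $f_0\perp\ran\Sigma(\{\alpha\})^{1/2}=\mul Q(\alpha)$. Furthermore, $\dom(m+\tau)(\alpha)=\dom m(\alpha)\cap\dom\tau(\alpha)$, because the integrand for $\Sigma_m+\Sigma_\tau$ is the sum of two nonnegative integrands. On this common domain, $(m+\tau)_{\rm op}(\alpha)=m_{\rm op}(\alpha)+\tau_{\rm op}(\alpha)$, since the limits simply add. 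For the orthogonality claim: any $x_0\in\ker(m+\tau)(\alpha)$ lies in $\dom m(\alpha)$, so $x_0\perp\mul m(\alpha)\supseteq\mul m(\alpha)\cap\mul\tau(\alpha)$.

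Next I define the linear map $\Phi\colon W\to\mc G$, $(x_0;y_0)\mapsto x_0$.
\begin{itemize}
\item \emph{Range inside the kernel.} Using \Cref{O35}, write $y_0=m_{\rm op}(\alpha)x_0+p$ and $-y_0=\tau_{\rm op}(\alpha)x_0+q$ with $p\in\ran\Sigma_m(\{\alpha\})^{1/2}$ and $q\in\ran\Sigma_\tau(\{\alpha\})^{1/2}$. Adding gives $(m+\tau)_{\rm op}(\alpha)x_0=-(p+q)$, and $-(p+q)$ lies in $\mul(m+\tau)(\alpha)$. Thus $(x_0;0)\in(m+\tau)(\alpha)$.
\item \emph{Surjectivity onto $\ker(m+\tau)(\alpha)$.} Conversely, let $x_0\in\ker(m+\tau)(\alpha)$, so $r\DE(m+\tau)_{\rm op}(\alpha)x_0\in\ran(\Sigma_m(\{\alpha\})+\Sigma_\tau(\{\alpha\}))^{1/2}$. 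I need to split $-r=p+q$ with $p,q$ in the individual square-root ranges; then $y_0\DE m_{\rm op}(\alpha)x_0+p$ gives an element of $W$. This splitting is the main obstacle. I will use the range identity $\ran(B+C)^{1/2}=\ran B^{1/2}+\ran C^{1/2}$ for bounded nonnegative $B,C$ (Fillmore–Williams). It can be proved with the range criterion from \cite{hassi:2004} already used in the proof of \Cref{O27}, applied to the row operator $(B^{1/2}\ C^{1/2})$, which satisfies $(B^{1/2}\ C^{1/2})(B^{1/2}\ C^{1/2})^*=B+C$.
\item \emph{Kernel of $\Phi$.} By definition, $\ker\Phi=\{(0;z) \DS z\in\mul m(\alpha),\ -z\in\mul\tau(\alpha)\}$, which is isomorphic to $\mul m(\alpha)\cap\mul\tau(\alpha)$.
\end{itemize}

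Finally, choose a linear right inverse $s$ of $\Phi\colon W\to\ker(m+\tau)(\alpha)$ (a Hamel complement suffices, since only a linear bijection is claimed). Then
\[
(x_0;y_0)\mapsto x_0\oplus\big(y_0-\pi_2 s(x_0)\big),
\]
with $\pi_2$ the second-component projection, is a linear bijection $W\to\ker(m+\tau)(\alpha)\oplus(\mul m(\alpha)\cap\mul\tau(\alpha))$. Composing it with the bijection from \Cref{O55} gives $\wt\Psi$.
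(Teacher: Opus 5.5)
Your proof is correct and follows the paper's route. You reduce to \Cref{O55}. You then use \Cref{O26}, \Cref{O35} and the Fillmore--Williams identity $\ran(B+C)^{1/2}=\ran B^{1/2}+\ran C^{1/2}$, which together are exactly the content of \Cref{O11}. From these you see that $(x_0;y_0)\mapsto x_0$ maps $W$ onto $\ker(m+\tau)(\alpha)$ with kernel $\{0\}\times(\mul m(\alpha)\cap\mul\tau(\alpha))$. The orthogonality claim comes from $\dom m(\alpha)\perp\mul m(\alpha)$, as in the paper.

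The only real difference is the last step. The paper chooses $\tilde y_0$ canonically, as the second component of the element of $V_{x_0}$ orthogonal to $\{0\}\times V$. You instead take an abstract linear right inverse of the surjection $W\to\ker(m+\tau)(\alpha)$. Your choice is less canonical, but it is also more robust. The paper's recipe tacitly needs $P_{\overline V}y_0\in V$, where $V=\mul m(\alpha)\cap\mul\tau(\alpha)$. This can fail when $\dim\mc G=\infty$ and $V$ is not closed; in that case the orthogonal complement in question is $\{0\}$. Your splitting needs no such property.
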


\begin{remark}
    In the  case that $m$ and $\tau$ have a common pole vector there does not always exist  a common directional (pole cancellation) function, cf., \Cref{ex:IB}.
\end{remark}

The proof is a consequence of the following fact. Recall from the previous section that the asserted equality may fail if either $m$ or $\tau$ is not in $\mc N_0(\mc G)$.

\begin{lemma}
\label{O11}
    For $m,\tau \in \mc N_0(\mc G)$ and $\alpha \in \bb R$ we have
    \begin{align}
    \label{O51}
        (m+\tau)(\alpha)=m(\alpha)+\tau (\alpha).
    \end{align}
\end{lemma}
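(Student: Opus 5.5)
We prove the two inclusions in \eqref{O51} separately, working throughout with the Herglotz--Nevanlinna description of generalised values in \Cref{O35}. Let $\Sigma_m$, $\Sigma_\tau$ and $D_m$, $D_\tau$ be the measures and linear terms in the integral representations \eqref{O96} of $m$ and $\tau$. Then $m+\tau\in\mc N_0(\mc G)$ has measure $\Sigma_m+\Sigma_\tau$ and linear term $D_m+D_\tau$.

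For the inclusion ``$\supseteq$'' we use directional functions. Let $(f_0;g_0)\in m(\alpha)$ and $(f_0;h_0)\in\tau(\alpha)$. By \Cref{O33} there are directional functions $f$ of $m$ and $\tilde f$ of $\tau$ at $\alpha$ with $f,\tilde f\to f_0$. These functions need not coincide, so we do not use them directly. Instead we use the decomposition of \Cref{O35}:
\[
(f_0;g_0)=(f_0;m_{\rm op}(\alpha)f_0)+(0;g_1), \qquad (f_0;h_0)=(f_0;\tau_{\rm op}(\alpha)f_0)+(0;h_1),
\]
with $g_1\in\ran\Sigma_m(\{\alpha\})^{\frac12}$ and $h_1\in\ran\Sigma_\tau(\{\alpha\})^{\frac12}$.

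The operator parts add up correctly. Since $f_0\in\dom m(\alpha)\cap\dom\tau(\alpha)$, \Cref{O26} gives
\[
\int\frac{\DD((\Sigma_m+\Sigma_\tau)(t)f_0,f_0)}{(t-\alpha)^2}<\infty ,
\]
and the limits add, so $(f_0;(m_{\rm op}+\tau_{\rm op})(\alpha)f_0)\in(m+\tau)_{\rm op}(\alpha)$.

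For the multivalued parts we need $\ran\Sigma_m(\{\alpha\})^{\frac12}+\ran\Sigma_\tau(\{\alpha\})^{\frac12}=\ran(\Sigma_m(\{\alpha\})+\Sigma_\tau(\{\alpha\}))^{\frac12}$. This is the standard range identity $\ran A^{1/2}+\ran B^{1/2}=\ran(A+B)^{1/2}$ for nonnegative bounded operators, obtained by writing $(A+B)^{1/2}$ via the row operator $(A^{1/2}\ B^{1/2})$ and using the range characterisation from \cite{hassi:2004} quoted in the proof of \Cref{O27}. Combining this with \Cref{O27} for $m+\tau$ gives ``$\supseteq$''.

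For the inclusion ``$\subseteq$'', let $(f_0;k_0)\in(m+\tau)(\alpha)$. By \Cref{O26},
\[
\int\frac{\DD((\Sigma_m+\Sigma_\tau)(t)f_0,f_0)}{(t-\alpha)^2}<\infty .
\]
Both measures are nonnegative, so each integral is finite separately, and $f_0\in\dom m(\alpha)\cap\dom\tau(\alpha)$. The constant function $f_0$ is then directional for $m$, for $\tau$ and for $m+\tau$, and
\[
\lim (m+\tau)(\lambda)f_0=m_{\rm op}(\alpha)f_0+\tau_{\rm op}(\alpha)f_0 .
\]
Hence $k_0-m_{\rm op}(\alpha)f_0-\tau_{\rm op}(\alpha)f_0\in\mul(m+\tau)(\alpha)=\ran(\Sigma_m+\Sigma_\tau)(\{\alpha\})^{\frac12}$. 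By the range identity this splits as $g_1+h_1$ with $g_1\in\mul m(\alpha)$ and $h_1\in\mul\tau(\alpha)$, which shows $(f_0;k_0)\in m(\alpha)+\tau(\alpha)$.

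The main obstacle is the range identity in infinite dimensions; in finite dimensions it reduces to $\ran A+\ran B=\ran(A+B)$, which follows from $\ker(A+B)=\ker A\cap\ker B$. For the general case we use Douglas' lemma. Let $R=(A^{1/2}\ B^{1/2})\colon\mc G\oplus\mc G\to\mc G$. Then $RR^*=A+B$, so $\ran R=\ran(RR^*)^{1/2}=\ran(A+B)^{1/2}$. Since $\ran R=\ran A^{1/2}+\ran B^{1/2}$, the identity follows. Positivity is essential throughout: it is what makes the domain condition split and what yields the Douglas factorisation, and this explains why the equality can fail in the indefinite case of \Cref{O13}.
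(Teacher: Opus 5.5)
Your proof is correct and follows the paper's argument. Both split the generalised values via \Cref{O35}, use positivity of $\Sigma_m,\Sigma_\tau$ together with \Cref{O26} to get $\dom(m+\tau)(\alpha)=\dom m(\alpha)\cap\dom\tau(\alpha)$ and additivity of the operator parts, and match the multivalued parts through $\ran(\Sigma_m(\{\alpha\})+\Sigma_\tau(\{\alpha\}))^{1/2}=\ran\Sigma_m(\{\alpha\})^{1/2}+\ran\Sigma_\tau(\{\alpha\})^{1/2}$.

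The only difference is that the paper cites this range identity from Fillmore--Williams, whereas you prove it via the row operator $(A^{1/2}\ B^{1/2})$ and Douglas' lemma, which is exactly their argument. The directional functions you introduce at the start of the ``$\supseteq$'' part are never used and can be dropped.
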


\begin{proof}
    Denote by $\Sigma_m$ and $\Sigma_\tau$ the measures in the integral representations of $m$ and $\tau$, respectively. Then the measure in the integral representation of $m+\tau$ is $\Sigma_m+\Sigma_\tau$. Thus \Cref{O26} yields $\dom (m+\tau)(\alpha)=\dom m(\alpha) \cap \dom \tau (\alpha)$ and further $(m+\tau)_{\text{op}}(\alpha)=m_{\text{op}}(\alpha)+\tau_{\text{op}}(\alpha)$. By \cite[Theorem~~2.2]{fillmore.williams:1971}, 
    \begin{align*}
        \ran \big(\Sigma_m (\{\alpha\})+\Sigma_\tau (\{\alpha\})\big)^{\frac 12}=\ran \Sigma_m (\{\alpha\})^{\frac 12}+\ran \Sigma_\tau (\{\alpha\})^{\frac 12}
    \end{align*}
    and the claim thus follows from \Cref{O35}.
\end{proof}

\begin{proof}[{Proof of \Cref{O52}}]
    We note first, using \Cref{O11}, that 
    \begin{align*}
        \ker (m+\tau)(\alpha) =\ker \big(m(\alpha)+\tau(\alpha) \big) \subseteq \dom m(\alpha) \subseteq \big(\mul m(\alpha)\big)^\perp \\
        \subseteq \big(\mul m(\alpha) \cap \mul \tau (\alpha)\big)^\perp.
    \end{align*}
    Hence $X \DE \ker \big((m+\tau)(\alpha)\big) \oplus \big(\mul m(\alpha) \cap \mul \tau (\alpha) \big)$ is indeed an orthogonal sum of subspaces of $\mc G$. We shall construct a linear bijection between   $X$ and $\ker (\wt A-\alpha)$. In view of \Cref{O55}, it is enough to find a linear bijection $\Lambda: \, X \to m(\alpha) \cap \big(-\tau (\alpha) \big)$. For $\eta_0 \in \mul m(\alpha) \cap \mul \tau (\alpha)$ we simply set $\Lambda \eta_0 \DE (0;\eta_0)$. For $x_0 \in \ker \big((m+\tau)(\alpha)\big)$, we need to find a deterministic way to pick an element $y_0 \in \mc G$ such that $(x_0;y_0) \in m(\alpha)$ and $(x_0;-y_0) \in \tau (\alpha)$. Since this requirement determines $y_0$ up to addition of an element of $V \DE \mul m(\alpha) \cap \mul \tau(\alpha)$, the definition
    \begin{align*}
        V_{x_0} \DE \Span \big((\{0\} \times V) \cup \{(x_0;y_0)\} \big)
    \end{align*}
    is independent of the choice of $y_0$. Clearly, the orthogonal complement of $\{0\} \times V$ in $V_{x_0}$ is one-dimensional and contains a unique element of the form $(x_0;\tilde y_0)$. We define $\Lambda x_0 \DE (x_0;\tilde y_0)$. One checks easily that $\Lambda$ is linear on $\ker \big((m+\tau)(\alpha)\big)$, and hence on all of $X$. Finally, $\Lambda$ is a bijection: For $(x_0;y_0) \in m(\alpha) \cap \big(-\tau (\alpha) \big)$ we have $\Lambda^{-1} (x_0;y_0)=x_0$ if $x_0 \neq 0$ and $y_0$ otherwise. 
\end{proof}

\subsection{Sharpness in the indefinite situation}
\label{S44}

In \Cref{S42} we asked: ``Does $m(\alpha) + \tau(\alpha)$ coincide with $(m + \tau)(\alpha)$?''. We saw that the answer is ``no'' when only assuming $m,\tau \in \mc N(\mc G)$ (\Cref{O13}), but ``yes'' if we assume $m,\tau \in \mc N_0(\mc G)$ (\Cref{O11}). The aim of this section is to further investigate the relationship between $m(\alpha) + \tau(\alpha)$ and $(m + \tau)(\alpha)$ as well as their directional functions. The following natural questions arise when investigating the eigenvalue criterion \eqref{eq:zeroes-main}:

\begin{enumerate}
    \item Does it hold that
    $$
    \ker\big(m(\alpha)+\tau(\alpha)\big)\subset \ker\big((m + \tau)(\alpha)\big)?
    $$
   I.e., do $(x_0;y_0)\in m(\alpha)$ and  $(x_0;-y_0)\in \tau(\alpha)$  imply  $(x_0;0)\in(m + \tau)(\alpha)$?   
    \item Conversely, does it hold that
    $$\ker\big(m(\alpha)+\tau(\alpha)\big)\supset \ker\big((m + \tau)(\alpha)\big)?
    $$
    I.e., does $(x_0;0)\in(m + \tau)(\alpha)$ imply the existence of some $y_0$ such that $(x_0;y_0)\in m(\alpha)$ and  $(x_0;-y_0)\in \tau(\alpha)$?
    \item  
    Is it possible in \Cref{cor:main-theorem} to choose the directional functions for $m$ and $\tau$ to be the same, i.e. $x_m(\lambda)=x_\tau(\lambda)$?
\end{enumerate}

The answers to all these questions will turn out to be negative. Hence it seems that \Cref{O55} in its general form is sharp and cannot be simplified without losing information.

The following example answers question 1 negatively.

\begin{example}
  Consider the $\mathcal N_1(\mathbb C^2)$-functions
\begin{align*}
    m(\lambda)=
    \begin{pmatrix}
        \frac1\lambda  & 1 \\
        1 & 2\lambda
    \end{pmatrix}
    \quad \text{ and }
    \quad
    \tau (\lambda)=
    \begin{pmatrix}
        -\frac1\lambda  & 1 \\
        1 & -2\lambda
    \end{pmatrix}.
\end{align*}  

Then $\binom{ -\lambda}{1}$ is a directional function for $m$ at $\omega=0$ and $\binom{ \lambda}{1}$  for $\tau$. This implies that 
\begin{align*}
  \bigg(\binom{ 0}{1} ; \binom{ 0}{0}\bigg)\in m(0) \cap\tau(0), 
\end{align*}
but 
\begin{align*}
    (m+\tau)(\lambda)=
    \begin{pmatrix}
        0  & 2 \\
        2 & 0
    \end{pmatrix}
\end{align*}
is an invertible constant. 
\end{example}
Here we had an example where both $m$ and $\tau$ even have a generalised zero with common root vector $\binom{ 0}{1}$, however, the sum $m+\tau$ does not have a generalised zero.

The next example will quickly give  a
negative answer to question 2.

\begin{example}
Consider the generalised Nevanlinna functions
\begin{align*}
    m(\lambda)=\frac1\lambda
    ,
    \qquad
    \tau (\lambda)= -\frac1\lambda+\lambda.
\end{align*}
Then we have $m(0)=\tau(0)=\Span\{(0;1)\}$ but  $(m+\tau)(0)=\Span\{(1;0)\}$. Hence  $\ker\big( m(0)+\tau(0)\big) $ is trivial and cannot contain  $\ker (m+\tau)(0)$, which is nontrivial.
\end{example}

We turn now to a slightly more complicated example, which shows even more. Here  $\alpha=0$ is a generalised zero of $m+\tau$, with nontrivial root vector $x_0$ which even belongs to the domains of both $m(0)$ and $\tau(0)$. Nonetheless, there does not exist $y_0$ such that $\big(x_0 ;y_0 \big) \in m(0)$ and $\big(x_0 ;-y_0 \big) \in \tau(0)$. 

\begin{example}\label{ex:question2}
Consider the generalised Nevanlinna functions
\begin{align*}
    m(\lambda)=
    \begin{pmatrix}
        \lambda^{-2}-\lambda^{-1} & \lambda^{-1} \\
        \lambda^{-1} & 1
    \end{pmatrix}
    ,
    \qquad
    \tau (\lambda)=
    \begin{pmatrix}
        \lambda^{-4}+1 & \lambda^{-2} \\
        \lambda^{-2} & 0
    \end{pmatrix}.
\end{align*}
Then we obtain
\begin{align}
     m(0)=\Span \bigg\{\bigg(\binom 01 ; \binom{0}{0} \bigg),\bigg(\binom 00 ; \binom 10 \bigg) \bigg\}, \\
     \tau(0)=\Span \bigg\{\bigg(\binom 01 ; \binom{0}{-1} \bigg),\bigg(\binom 00 ; \binom 10 \bigg) \bigg\}
\end{align}
as both relations are symmetric, and hence at most two-dimensional and we have directional functions   $\binom{-\lambda}{1 -\lambda}, \binom{0}{\lambda}$ of  $m$ and $\binom{-\lambda^2}{1}, \binom{0}{\lambda^2}$ of $\tau$ at $0$. Hence
\begin{align*}
    m(0)+\tau (0) = \Span \bigg\{\bigg(\binom 01 ; \binom{0}{-1} \bigg),\bigg(\binom 00 ; \binom 10 \bigg) \bigg\}.
\end{align*}
On the other hand 
\begin{align*}
    (m+\tau)(\lambda)=
    \begin{pmatrix}
        \lambda^{-4}+\lambda^{-2}-\lambda^{-1}+1 & \lambda^{-2}+\lambda^{-1} \\
        \lambda^{-2}+\lambda^{-1} & 1
    \end{pmatrix}
    .
\end{align*}
The functions  $\binom{0}{\lambda^2}, \binom{-\lambda^2}{1-\lambda+2\lambda^2}$ are directional functions of $m+\tau$ at $0$ and hence 
\begin{align*}
    (m+\tau)(0) = \Span \bigg\{\bigg(\binom 01 ; \binom 00 \bigg),\bigg(\binom 00 ; \binom 10 \bigg) \bigg\}.
\end{align*}
\end{example}

Regarding question 3 we give two examples. In the first one the functions are scalar but one function is a proper generalised Nevanlinna function, while in the second both functions are Herglotz-Nevanlinna functions but operator-valued.

\begin{example}
     Let $m(\lambda)=\lambda^{-1}$ and $\tau(\lambda)=\lambda^{-2}$. Clearly $(0;1)$ is a directional pair for both $m$ and $\tau$ at $0$, i.e., \eqref{eq:zeroes-main} is satisfied with $x_0=0$ and $y_0=1$. Yet $m$ and $\tau$ share no nontrivial directional function. Indeed, if $x(\lambda)$ is such a directional function of $\tau$ at $0$ then $x(\lambda)/\lambda^2$ converges to a nonzero limit as $\lambda \hat{\to} 0$. Hence $x(\lambda)$, although it certainly is a directional function for $m$, can only give the trivial directional pair. We see that the conditions stated in \Cref{cor:main-theorem} cannot be satisfied nontrivially with a common directional function.
\end{example}

\begin{example}\label{ex:IB}
    Let $\mc G$ be an infinite-dimensional Hilbert space and $B$  a bounded uniformly positive operator on $\mc G$ without eigenvalues. Set
    $$
     m(\lambda) \DE -\lambda^{-1} I_{\mc G}\qquad \text{ and }\qquad\tau (\lambda) \DE -\lambda^{-1} B=Bm(\lambda).
    $$
    Since $\ran B^{\frac 12}=\ran B=\mc G$, \Cref{O35} gives $m(0)=\tau(0)=\{0\} \times \mc G$. Hence \eqref{eq:zeroes-main} is satisfied for any tuple of the form $(0;y_0)$ with $y_0 \in \mc G$. Yet, any two directional functions $x_m$ and $x_\tau$ satisfying the criterion in \Cref{cor:main-theorem} must be linearly independent. Indeed, let us assume $x_m = c x_\tau$ with $\lim_{\lambda \hat{\to} \alpha}  m(\lambda)x_m(\lambda)=y_0\neq 0$ and some nontrivial constant $c$. This would lead to  
    \begin{align*}
        By_0&=\lim_{\lambda \hat{\to} \alpha} B m(\lambda) x_m(\lambda) =\lim_{\lambda \hat{\to} \alpha} \tau(\lambda) x_m(\lambda)=c\lim_{\lambda \hat{\to} \alpha} \tau(\lambda) x_\tau(\lambda) \\
        &=-c\lim_{\lambda \hat{\to} \alpha} m(\lambda) x_m(\lambda)=-c y_0.
    \end{align*}
    However, this is impossible due to the assumption that $B$ has no eigenvalues.
\end{example}

The above counterexamples for questions 1 and 2 share one property, namely that the point of interest  is  also  a generalised pole not of positive type for both $m$ and $\tau$ with a common pole vector. This allows for cancellations, which seem to be necessary for any counterexample.

However, if $m$ and $\tau$ do not share a pole vector at a point $\alpha$  then examples suggest that these cancellation effects cannot occur. This leads us to the following conjecture.

\begin{conjecture}
Under the assumptions of \Cref{O55} it holds $\alpha\in\sigma_p(\wt A)$ if and only if either there exists a nontrivial element $y_0$ such that \begin{align}\label{eq:conj1}
 (0;y_0)\in m(\alpha)\cap\tau(\alpha)   
\end{align} 
or there exists a nontrivial element $x_0$ such that 
\begin{align}\label{eq:conj2}
(x_0;0)\in (m+\tau)(\alpha). 
\end{align}
\end {conjecture}
Let us first mention that  indeed \cref{eq:conj1} and \cref{eq:conj2} are sufficient conditions for $\alpha\in\sigma_p(\wt A)$. For \cref{eq:conj1} this is part of \Cref{O55}. Condition \cref{eq:conj2} means that $\alpha$ is a generalised pole of the function $-(m+\tau)^{-1}$. Since $\wt A$ is a, not necessarily minimally, representing relation of this function, \cref{eq:conj2} implies that $\wt A$ has the  eigenvalue $\alpha$.

Moreover, we have seen that the conjecture holds in certain cases, namely,   if $m$ and $\tau$ are scalar \cite{behrndt.luger:2007} or both in $\mc N_0(\mc G)$ (cf. \Cref{O52}). More generally, we mention \Cref{O14} which implies that the conjecture holds if $\alpha$ is not a generalised pole of negative type of either $m$ or $\tau$.

Note that even if the conjecture was true, it cannot replace \Cref{O55}, as we know that in general there is no one-to-one correspondence between the eigenspace of $\wt A$ and the linear span of all elements $(0;y_0)$,  $(x_0;0)$ satisfying \eqref{eq:conj1} and \eqref{eq:conj2}, respectively. This is illustrated by \Cref{ex:question2}, where we have a generalised zero of $m+\tau$ that does not give rise to an eigenvector of $\wt A$. Still, the point $\omega=0$ is an eigenvalue of $\wt A$, but the reason is different, namely because $m$ and $\tau$ have the common pole vector $(1,0)^\top$.

\section{Local results}
\label{S6}

The central tool introduced in this paper is the generalised value $Q(\omega)$ of a function $Q \in \mc N(\mc G)$. In \Cref{O33} we gave a description of $Q(\omega)$ purely in terms of the values of $Q(\lambda)$ in a neighborhood of $\omega$. Therefore, it is not surprising that our results can be lifted to more general classes of functions, which behave only locally like functions in $\mc N(\mc G)$. Basically the results presented so far still hold, when the assumptions hold only locally. The purpose of this last section is to make this precise.

After recalling basic facts on operator representations of local generalised Nevanlinna functions, we will give a characterisation of generalised values analogous
to \Cref{O33}. Finally, in \Cref{OS63} we extend some of our results in the definite case also to functions that are only locally in $\mc N_0$.

For the purpose of defining local generalised Nevanlinna functions, we are often going to use open domains $\Omega \subseteq \bb C$ with the following additional properties:
\begin{align}
    \label{O50}
    \lambda \in \Omega \Leftrightarrow \overline{\lambda} \in \Omega, \qquad \Omega \cap \bb C^\pm \text{ is simply connected}, \qquad \Omega \cap \bb R \neq \emptyset.
\end{align}

\begin{remark}
    One might consider domains in the closed complex plane $\mathbb C\cup\{\infty\}$, but we refrain  from this technicality. 
\end{remark}

\begin{definition}
\label{O94}
    Let $\Omega$ be a domain satisfying \eqref{O50} and let $\mc G$ be a Hilbert space. Let $Q$ be a $\mc B(\mc G)$-valued meromorphic function on $\Omega$ that satisfies $Q(\overline{\lambda})=Q (\lambda)^*$ for $\lambda \in \Omega$. Then $Q$ is called a \emph{local generalised Nevanlinna function} if, for every $\Omega'$ satisfying \eqref{O50} and $\overline{\Omega'} \subseteq \Omega$, one can write $Q$ in the form $Q=Q_{\mc N}+H$ where $Q_{\mc N} \in \mc N(\mc G)$ and $H$ is holomorphic on $\Omega'$.

    The class of all $\mc B(\mc G)$-valued local generalised Nevanlinna functions in $\Omega$ will be denoted by $\mc N(\mc G,\Omega)$. 
\end{definition}

\subsection{Operator representations of functions in $\mc N(\mc G,\Omega)$}
\label{S61}

Local generalised Nevanlinna functions admit operator representations similar to generalised Nevanlinna functions. A major difference compared with the representation of a function in $\mc N(\mc G)$ is that the representing relation of $Q \in \mc N(\mc G,\Omega)$ may act in a proper Krein space. The following definition describes the kind of relations that can appear in an operator representation of a function in $\mc N(\mc G,\Omega)$.

\begin{definition}
\label{O82}
    Let $\Lambda$ be a domain satisfying \eqref{O50}, and let $T$ be a self-adjoint linear relation in a Krein space $(\mc K,[.,.])$. We say that $T$ is \emph{of type $\pi_+$ over $\Lambda$} if, for every domain $\Lambda'$ satisfying \eqref{O50} and $\overline{\Lambda'} \subseteq \Lambda$, there exists a self-adjoint projection $E$ in $\mc K$ such that $T$ can be decomposed as
    \begin{align}
        T=(T \cap (E\mc K)^2) [\dotplus] (T \cap ((I-E)\mc K)^2)
    \end{align}
and the following holds:
\begin{Enumerate}
    \item $(E\mc K,[.,.])$ is a Pontryagin space and the set $\rho(T \cap (E\mc K)^2)$ is
nonempty,
    \item $\sigma (T \cap ((I-E)\mc K)^2) \cap \Lambda'=\emptyset$.
\end{Enumerate}
\end{definition}

For functions in $\mc N(\mc G,\Omega)$ the question about representations of the form \eqref{O28} is a bit more delicate than in the case of $\mc N(\mc G)$, since the domain $\Omega$ also plays a role. 

\begin{definition}
\label{O83}
    Let $\Omega, \Lambda$ be domains satisfying \eqref{O50} with $\overline{\Lambda} \subseteq \Omega$. Suppose that $Q \in \mc N(\mc G,\Omega)$ is given on $\Lambda$ by
    \begin{align*}
Q(\lambda) &=Q(\lambda_0)^* \\
&+(\lambda-\overline{\lambda_0})\gamma^+ \big(I+(\lambda-\lambda_0)(T-\lambda)^{-1} \big)\gamma, \quad \lambda \in \mf h(Q) \cap \rho(T) \cap \Lambda,
\end{align*}
where $T$ is a self-adjoint linear relation in a Krein space $\mc K$ that is of type $\pi_+$ over $\Lambda$, $\lambda_0 \in \mf h(Q) \cap \rho(T) \cap \Lambda \cap \bb C^+$ and $\gamma: \mc G \to \mc K$ is a bounded linear map. Then we say that $(\mc K,T,\gamma)$ is a \emph{$\pi_+$-realisation of $Q$ over $\Lambda$}. If, in addition,
\begin{align}
\label{O86}
\mc K= \overline{\Span} \big\{{\big(I+(\lambda-\lambda_0)(T-\lambda)^{-1} \big)\gamma} x \DS \lambda \in \rho(T) \cap \Lambda, \, x \in {\mc G} \big\}
\end{align}
then we say that the realisation $(\mc K,T,\gamma)$ is \emph{minimal}.
\end{definition}

As in the Pontryagin space case we shall use the abbreviation
\begin{align*}
    \gamma(\lambda) \DE \big(I+(\lambda-\lambda_0)(T-\lambda)^{-1} \big)\gamma, \quad \lambda \in \rho(T).
\end{align*}

The aforementioned analogue of \Cref{prop:representation} reads as follows. The statement appeared first in \cite{behrndt.luger:2007} but in principle it is already contained (with proof) in \cite{jonas:2005}.

\begin{proposition}
\label{O85}
    Let $\Omega$ be as above. A function $Q$ meromorphic in $\Omega \setminus \bb R$ with values in $\mathcal B(\mathcal  G)$ belongs to $\mc N(\mc G,\Omega)$ if and only if, for every $\Lambda$ satisfying \eqref{O50} and $\overline{\Lambda} \subseteq \Omega$, there exists a minimal $\pi_+$-realisation of $Q$ over $\Lambda$. 
\end{proposition}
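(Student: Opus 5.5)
Both directions of \Cref{O85} are proved by reduction to the global representation theorem, \Cref{prop:representation}, and then by gluing.

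\emph{Necessity.} Let $Q \in \mc N(\mc G,\Omega)$ and let $\Lambda$ satisfy \eqref{O50} with $\overline{\Lambda}\subseteq\Omega$. I first choose an intermediate domain $\Omega'$ satisfying \eqref{O50} with $\overline{\Lambda}\subseteq\Omega'$ and $\overline{\Omega'}\subseteq\Omega$. By \Cref{O94} we can write $Q=Q_{\mc N}+H$ with $Q_{\mc N}\in\mc N(\mc G)$ and $H$ holomorphic on $\Omega'$.

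\Cref{prop:representation} gives $Q_{\mc N}$ a minimal realisation $(\Pi,A_1,\gamma_1)$ in a Pontryagin space. For $H$ I will use that a function holomorphic on $\Omega'$ with $H(\overline\lambda)=H(\lambda)^*$ satisfies
\[
\frac{H(\lambda)-H(\mu)^*}{\lambda-\overline\mu}=\frac{1}{2\pi i}\oint_{\partial \Omega''}\frac{H(\zeta)\,d\zeta}{(\zeta-\lambda)(\zeta-\overline\mu)}
\]
for a contour in $\Omega'\setminus\overline{\Lambda}$. This kernel can be realised in a Krein space $\mc K_2$, for instance an $L^2$-type space on the contour with an indefinite inner product. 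In $\mc K_2$ the multiplication relation $A_2$ has spectrum contained in the contour, hence disjoint from $\Lambda$. This is the standard construction of Jonas \cite{jonas:2005}.

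Taking the orthogonal sum $\mc K=\Pi[\dotplus]\mc K_2$, $T=A_1\oplus A_2$, $\gamma=\binom{\gamma_1}{\gamma_2}$, and adjusting the additive constant as in \Cref{O21}, I obtain a $\pi_+$-realisation over $\Lambda$. Here $E$ is the projection onto $\Pi$, and for subdomains $\Lambda'$ the spectral condition (ii) of \Cref{O82} holds.

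Minimality \eqref{O86} is then enforced by passing to the closed span $\mc K_0$ of the $\gamma(\lambda)x$ for $\lambda \in \rho(T)\cap\Lambda$, followed by a compression to a quotient by the isotropic part. One must check that the restriction of $T$ to this subspace is still of type $\pi_+$ over $\Lambda$. I would argue this with Riesz projections: for $\Lambda'$ the projection $E$ can be taken to be the spectral projection of $T$ onto the part of the spectrum outside a neighbourhood of $\overline{\Lambda'}$'s complement. Such projections commute with the resolvent and therefore leave $\mc K_0$ invariant.

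\emph{Sufficiency.} Conversely, suppose minimal $\pi_+$-realisations exist over every $\Lambda$, and fix $\Omega'$ with $\overline{\Omega'}\subseteq\Omega$. I pick $\Lambda$ with $\overline{\Omega'}\subseteq\Lambda$ and $\overline{\Lambda}\subseteq\Omega$, take the realisation $(\mc K,T,\gamma)$ over $\Lambda$, and take $E$ as in \Cref{O82} for $\Lambda'=\Omega'$. Splitting $\gamma=E\gamma+(I-E)\gamma$, the representation decomposes $Q$ on $\Lambda$ as a sum of two pieces:
\begin{itemize}
\item a term built from the Pontryagin-space part $T\cap(E\mc K)^2$ with $\rho\neq\emptyset$, which is a generalised Nevanlinna function $Q_{\mc N}$ by \Cref{prop:representation};
\item a term built from $T\cap((I-E)\mc K)^2$, whose resolvent is holomorphic on $\Omega'$ because its spectrum avoids $\Omega'$.
\end{itemize}
The identity $Q=Q_{\mc N}+H$ initially holds on $\Lambda\cap\mf h(Q)\cap\rho(T)$. $Q_{\mc N}$ is defined on all of $\bb C\setminus\bb R$, so I set $H:=Q-Q_{\mc N}$ on $\Omega$; it then remains to show that this $H$ is holomorphic on $\Omega'$. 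On $\Omega'\cap\Lambda$ this follows from the decomposition, with the constant absorbed into $H$. On points of $\Omega'\setminus\Lambda$, if any, it follows by analytic continuation, since both $Q$ and $Q_{\mc N}$ are meromorphic there. Because $\overline{\Omega'}\subseteq\Lambda$ was arranged, these points in fact do not occur.

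\emph{Main obstacle.} I expect the hardest step to be showing that minimality can be achieved without destroying the $\pi_+$ structure. The $E$-decomposition must be compatible with the minimal subspace, and the Krein-space compression must not produce degenerate behaviour. I would address this with the Riesz-projection argument above and otherwise follow \cite{jonas:2005} closely.
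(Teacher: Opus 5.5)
Your sufficiency direction is correct. It is the same splitting the paper carries out in the proof of \Cref{O84}. Take the projection $E$ from \Cref{O82} with $\Lambda'=\Omega'$. The cross terms vanish, and the $E$-part lies in $\mc N(\mc G)$ by \Cref{prop:representation}, provided the constant $Q(\lambda_0)^*$ is split so that each piece has the correct imaginary part at $\lambda_0$. The $(I-E)$-part is holomorphic on $\Omega'$. In the necessity direction, your non-minimal realisation is also fine: the minimal Pontryagin realisation of $Q_{\mc N}$, orthogonally summed with Jonas' contour realisation of $H$, with $E$ the projection onto $\Pi$. This needs $\overline{\Lambda}$ to be compact, since no cycle in $\Omega'\setminus\overline{\Lambda}$ winds once around every point of an unbounded set, and the paper's convention (closures in $\bb C$) does not exclude unbounded $\Lambda$. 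The paper itself does not prove \Cref{O85}. It attributes the result to \cite{behrndt.luger:2007} and \cite{jonas:2005}, and the substantial content there is exactly the step where your argument fails: minimality.

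The minimality step, as you sketch it, does not work, for three reasons.

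\textbf{(1)} Riesz projections cannot supply the splitting required in \Cref{O82}. A cycle in $\rho(T)$ cannot cross $\bb R$ where $T$ has real spectrum, and $T$ may have spectrum on all of $\bb R$; for instance $\sigma(A_1)=\bb R$ if $Q_{\mc N}=Q_3$ from \Cref{ex:i}. Moreover, $\sigma(A_2)$ sits on a contour that itself crosses $\bb R$. In such cases Riesz projections of $T$ can only split off isolated nonreal eigenvalues, never the non-Pontryagin part away from $\Lambda'$. No other ``spectral projection'' is available before you know the operator has a local spectral function.

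\textbf{(2)} The projection $E$ onto $\Pi$ does split $T$, but it need not map $\mc K_0$ into itself. Commuting with the resolvent does not imply invariance of the closed span of the $\gamma(\lambda)x$; only norm limits of linear combinations of resolvents preserve it. In general $\mc K_0$ sits ``diagonally'' in $\Pi[\dotplus]\mc K_2$.

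\textbf{(3)} In a Krein space, $\mc K_0/(\mc K_0\cap\mc K_0^{[\perp]})$ is in general not a Krein space in the quotient topology. A closed nondegenerate subspace is a Krein space in the induced topology only if it is orthocomplemented. One must therefore complete in a weaker topology, and then there is no a priori reason for the compressed resolvents to stay bounded, let alone for the compressed relation to be of type $\pi_+$ over $\Lambda$.

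In Pontryagin spaces all of this is harmless, which is why \Cref{prop:representation} is routine; here it is the whole difficulty. What is missing is an intrinsic proof that the minimal representing relation is of type $\pi_+$ over $\Lambda$. This is supplied by Jonas' local spectral theory for operators definitizable over a domain: one establishes local definitizability of the minimal relation from its spectral behaviour in $\Lambda$, and then takes the projections of \Cref{O82} from its local spectral function, not from Riesz projections of the non-minimal $T$. Without this, your necessity argument yields only a non-minimal $\pi_+$-realisation.
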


Like for functions in $\mc N(\mc G)$, we say that $Q \in \mc N(\mc G,\Omega)$ is regular if there exists $\lambda \in \mathfrak{h}(Q)$ such that $Q(\lambda)$ is invertible in $\mc B(\mc G)$. In this case we write $\wh Q(\lambda) \DE -Q(\lambda)^{-1}$. According to \cite[Proposition~2.6]{behrndt.luger:2007} the function $\wh Q$ belongs to $\mc N(\mc G,\Omega)$ as well. Moreover, if $Q$ has a minimal $\pi_+$-realisation $(\mc K,T,\gamma)$ over $\Lambda$ then $\wh Q$ has the minimal $\pi_+$-realisation $(\mc K,\wh T,\hat{\gamma})$ over $\Lambda$, where
\begin{align}
\label{O92}
        (\wh{T}-\lambda_0)^{-1} \DE (T-\lambda_0)^{-1}+\gamma(\lambda_0)\widehat{Q}(\lambda_0)\gamma(\overline{\lambda_0})^+
    \end{align}
    and $\hat{\gamma} \DE \gamma \wh Q(\lambda_0)$.

\subsection{The generalised value $Q(\omega)$, revisited}
\label{S62}

For $Q \in \mc N(\mc G,\Omega)$ and a real point $\omega$ we introduce directional functions in exactly the same way as in \Cref{O31}. Also the notions \emph{directional pair and root/pole function/vector} carry over in a verbatim way. For the definition of $Q(\omega)$ we first have to make a choice of a minimal realisation.

\begin{definition}
\label{O87}
Let $Q \in \mc N(\mc G,\Omega)$, choose $\omega \in \Omega \cap \bb R$, and fix a minimal $\pi_+$-realisation $(\mc K,T,\gamma)$ of $Q$ over a domain $\Lambda$ with $\omega \in \Lambda \subseteq \overline{\Lambda} \subseteq \Omega$. We associate to the function $Q$ the linear relation $Q(\omega)\in\mc G\times \mc G$ defined by
\begin{align*}
Q(\omega) \DE Q(\lambda_0)^*+(\omega-\overline{\lambda_0})\gamma^+\big(I+(\omega-\lambda_0)(T-\omega)^{-1} \big) \gamma.
\end{align*}
\end{definition}
We are going to show the following analogue for \Cref{O33}.

\begin{theorem}
\label{O99}
Suppose we are in the setting of \Cref{O87}. Then  the relation $Q(\omega)$ is independent of the choice of minimal realisation. For any fixed minimal $\pi_+$-realisation as in \Cref{O87} and for ${f_0},{g_0} \in \mc G$ the following statements are equivalent.
\begin{itemize}
    \item[{\rm(A)}] The vector ${g_0}$ is a generalised value of $Q$ at $\omega$ in the direction ${f_0}$.
    \item[{\rm(A')}] $({f_0};{g_0})$ is a directional pair of $Q$ at $\omega$.
    \item[{\rm(B)}] $({f_0};{g_0}) \in Q(\omega)$.
    \item[{\rm(C)}] There exists an element ${v} \in \mc K$ such that 
    \begin{align}\label{eq:fS6}
     \gamma {f_0} &=\big(I+(\lambda_0-\omega)(T-\lambda_0)^{-1} \big){v}, \\\label{eq:gS6}
    {g_0} &= Q(\lambda_0)^* {f_0} +(\omega-\overline{\lambda_0}) \gamma^+ {v}.   
    \end{align}
\end{itemize}
The element ${v}$ is uniquely determined by $({f_0};{g_0})$.
If $Q$ is strict, i.e., if \eqref{O41} holds for some $\mu \in \rho (T) \cap \Lambda$, then also $({f_0};{g_0})$ is uniquely determined by ${v}$.
\end{theorem}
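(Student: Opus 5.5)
The plan is to follow the proof of \Cref{O33} step by step, checking at each point which properties of the Pontryagin space realisation were really used and replacing them by their local counterparts from \Cref{O82}. The one new ingredient is the decomposition $T=(T\cap(E\mc K)^2)\,[\dotplus]\,(T\cap((I-E)\mc K)^2)$, taken over a domain $\Lambda'$ with $\omega\in\Lambda'$ and $\overline{\Lambda'}\subseteq\Lambda$. Near $\omega$ the component in $(I-E)\mc K$ has $\omega$ in its resolvent set, so there everything is holomorphic and harmless. All spectral phenomena at $\omega$ therefore live in the Pontryagin space $E\mc K$.

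\textbf{Independence of the realisation.} Take two minimal $\pi_+$-realisations over domains $\Lambda_1,\Lambda_2$ containing $\omega$. I would map $\gamma_1(\lambda)x\mapsto\gamma_2(\lambda)x$ for $\lambda$ in a common nonreal subset. This is isometric because both sides reproduce $K_Q(\lambda,\mu)$. Its domain and range are dense by \eqref{O86}, possibly after an identity-theorem argument to shrink to a common set of $\lambda$'s. The map then extends to a unitary $U$ intertwining the two relations. The extension step is exactly the point where the Krein-space setting is more delicate, and I would justify it using the $\pi_+$ structure. Once $U$ is available, the computation from \Cref{O91} goes through verbatim.

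\textbf{The equivalences.} The equivalence (B)$\Leftrightarrow$(C) is pure linear-relation algebra, exactly as in Subsection~\ref{BC}. For (A')$\Rightarrow$(C) I would redo \Cref{O42}: boundedness of $[\gamma(\lambda)f(\lambda),\gamma(\lambda)f(\lambda)]$ together with convergence against the total set $\{\gamma(\mu)h\}$ gives weak convergence. In a Krein space, however, boundedness of $[x,x]$ does not control the norm. I would therefore split along $E$. The $(I-E)$-component of $\gamma(\lambda)f(\lambda)$ equals $(I+(\lambda-\lambda_0)(T-\lambda)^{-1})(I-E)\gamma f(\lambda)$, which converges in norm because $\omega\in\rho$ of that part and $f(\lambda)\to f_0$. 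Its inner product with itself therefore stays bounded. Hence the $E$-component has bounded $[\cdot,\cdot]$ in the Pontryagin space $E\mc K$, and there the Pontryagin weak-convergence criterion applies. This splitting is the main technical obstacle.

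The remaining steps I would also transfer with the splitting:
\begin{itemize}
\item \Cref{O36}: apply it on $E\mc K$, and on $(I-E)\mc K$ use norm convergence.
\item \Cref{O43}: carry over unchanged.
\item \Cref{le:eigenvalue}: use minimality \eqref{O86}. Note that $\gamma(\lambda)^+v=\frac{\omega-\overline{\lambda_0}}{\omega-\overline\lambda}\gamma^+v$ holds for all $\lambda\in\rho(T)\cap\Lambda$, so the density argument works.
\item \Cref{O32}, \Cref{O75} and \Cref{O25}: these are algebraic and carry over verbatim, using the realisation \eqref{O92} of $\wh Q$ over $\Lambda$.
\end{itemize}
Then (C)$\Rightarrow$(A) follows by the same Möbius-transformation argument. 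Finally, the uniqueness statements follow as before from the local version of \Cref{le:eigenvalue} and from $\ker\gamma=\{0\}$ in the strict case.
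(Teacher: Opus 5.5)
Your equivalence proofs follow the paper's route. (B)$\Leftrightarrow$(C) is relation algebra. The local versions of \Cref{le:eigenvalue}, \Cref{O32}, \Cref{O75} and \Cref{O25} use the realisation \eqref{O92} of the inverse. (C)$\Rightarrow$(A) is the same Möbius argument, ending in the local analogue of \Cref{O43}.

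For (A')$\Rightarrow$(C) you split the weak-convergence argument of \Cref{O42} along $E$ by hand. The paper instead writes $Q=Q_{\mc N}+H$, checks that $(E\mc K,T|_{E\mc K},E\gamma)$ is a minimal realisation of $Q_{\mc N}\in\mc N(\mc G)$, and applies \Cref{O42} to $Q_{\mc N}$. It then adds the explicit vector $(I+(\omega-\lambda_0)(T|_{(I-E)\mc K}-\omega)^{-1})(I-E)\gamma f_0$. The two are equivalent; both need $\{E\gamma(\mu)h\}$ to be total in $E\mc K$, which holds because $E$ is bounded.

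\textbf{The gap is the independence of the realisation.} Your argument depends on extending the densely defined isometry $\gamma_1(\lambda)x\mapsto\gamma_2(\lambda)x$ to a unitary operator. This is exactly what fails in Krein spaces. Unlike in Pontryagin spaces, an isometry with dense domain and dense range need not be continuous, so it need not extend to a unitary operator. Saying you would ``justify it using the $\pi_+$ structure'' is not an argument. The decompositions in \Cref{O82} are not canonical, and nothing forces the isometry to respect the two projections $E_1$ and $E_2$.

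The repair is to drop this step entirely. None of your arguments for (A')$\Rightarrow$(C), (B)$\Leftrightarrow$(C) or (C)$\Rightarrow$(A) uses independence. Each is carried out in one fixed minimal realisation, and for $\wh{Q+\Theta}$ in the particular realisation obtained from it via \eqref{O92}. Hence, for every minimal $\pi_+$-realisation, the relation of \Cref{O87} coincides with the set of directional pairs of $Q$ at $\omega$. That set is defined without reference to any realisation, so independence follows at once.

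This is how the paper proceeds. You only need to prove the equivalences for an arbitrary fixed realisation first, and state independence as a consequence. In the intermediate lemmas, keep track of which realisation each generalised value refers to.
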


The auxiliary results used in the proof of \Cref{O33} all do have analogues for local generalised Nevanlinna functions, with mostly identical proofs. An argument is needed for generalizing \Cref{O42} and \Cref{O43}, and we shall provide the necessary details in the proof of the following proposition.

\begin{proposition}
\label{O84}
Let $Q \in \mc N(\mc G,\Omega)$ and let $(\mc K,T,\gamma)$ be a minimal $\pi_+$-realisation of $Q$ over a suitable subdomain $\Lambda$ of $\Omega$. Let $\omega \in \Lambda \cap \bb R$, and suppose that $({f_0};{g_0}) \in \mc G \oplus \mc G$ is a directional pair of $Q$ at $\omega$. Then there exists a vector ${v} \in \mc K$ such that 
\begin{align}
\label{O88}
\gamma {f_0} =\big(I+(\lambda_0-\omega)(T-\lambda_0)^{-1} \big){v}, \\
\label{O89}
{g_0} = Q(\lambda_0)^* {f_0} +(\omega-\overline{\lambda_0}) \gamma^+ {v}.
\end{align}
Conversely, if $\omega$ is not an eigenvalue of $T$, and $v \in \mc K$ and $f_0,g_0 \in \mc G$ are related by \eqref{O88}, then the constant function $f(\lambda) \DE{f_0}$ is a directional function of $Q$ at $\omega$ and $g_0 = \lim_{\lambda \hat{\to} \omega} Q(\lambda){f_0}$.
\end{proposition}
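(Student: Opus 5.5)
Both directions are proved by reducing to the Pontryagin space situation of \Cref{O42} and \Cref{O43}, using the decomposition from \Cref{O82}. Fix a domain $\Lambda'$ satisfying \eqref{O50} with $\omega\in\Lambda'$ and $\overline{\Lambda'}\subseteq\Lambda$. Let $E$ be the corresponding self-adjoint projection, so that $T=T_1[\dotplus]T_2$ with $T_1=T\cap(E\mc K)^2$ acting in the Pontryagin space $\Pi_1:=E\mc K$, and $T_2$ acting in $\mc K_2:=(I-E)\mc K$ with $\sigma(T_2)\cap\Lambda'=\emptyset$.

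For the first statement, let $f$ be a directional function with pair $(f_0;g_0)$. I would show that $\gamma(\lambda)f(\lambda)$ converges weakly in $\mc K$ as $\lambda\hat\to\omega$, and treat the two components separately.
\begin{itemize}
\item On $\mc K_2$, the function $\lambda\mapsto(I-E)\gamma(\lambda)=\big(I+(\lambda-\lambda_0)(T_2-\lambda)^{-1}\big)(I-E)\gamma$ is holomorphic in norm near $\omega$, because $\omega\in\rho(T_2)$. Since $f(\lambda)\to f_0$, the product $(I-E)\gamma(\lambda)f(\lambda)$ converges in norm to $(I-E)\gamma(\omega)f_0$.
\item For the $\Pi_1$-component, the Gram quantity satisfies
\[
[E\gamma(\lambda)f(\lambda),E\gamma(\lambda)f(\lambda)]=(K_Q(\lambda,\lambda)f(\lambda),f(\lambda))-[(I-E)\gamma(\lambda)f(\lambda),(I-E)\gamma(\lambda)f(\lambda)].
\]
The first term is bounded by \eqref{O37} and the second converges, so the left-hand side stays bounded.
\end{itemize}

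The step I expect to be the main obstacle is the next one. In a Pontryagin space, a bounded Gram quantity alone does not give weak compactness. One also needs the pairings against a total set. I would take the total set $\{\gamma(\mu)h:\mu\in\rho(T)\cap\Lambda,\ h\in\mc G\}$ from \eqref{O86}. As in \Cref{O42}, the pairings
\[
[\gamma(\lambda)f(\lambda),\gamma(\mu)h]=\Big(\frac{Q(\lambda)-Q(\mu)^*}{\lambda-\overline\mu}f(\lambda),h\Big)
\]
converge. Their $\mc K_2$-parts converge in norm by the first bullet, so $[E\gamma(\lambda)f(\lambda),E\gamma(\mu)h]$ converges as well, and $\{E\gamma(\mu)h\}$ is total in $\Pi_1$. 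The Pontryagin space criterion used in \Cref{O42} then gives weak convergence of $E\gamma(\lambda)f(\lambda)$, and hence $\gamma(\lambda)f(\lambda)\rightharpoonup v$ in $\mc K$.

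From here \eqref{O88} follows from \eqref{O62}, since $(T-\lambda_0)^{-1}$ is bounded and therefore weakly continuous. Likewise \eqref{O89} follows from the representation in \Cref{O83}, since $\gamma^+$ is bounded. This repeats the computations in \Cref{O42}.

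For the converse I would follow \Cref{O43}. From \eqref{O88} we get $\gamma(\lambda)f_0=\big(I+(\lambda-\omega)(T-\lambda)^{-1}\big)v$, so it suffices to show $(\lambda-\omega)(T-\lambda)^{-1}v\to0$ strongly. I would split this by $E$:
\begin{itemize}
\item On $\mc K_2$ the term $(\lambda-\omega)(T_2-\lambda)^{-1}$ tends to $0$ in norm, because $(T_2-\lambda)^{-1}$ stays bounded near $\omega$.
\item On $\Pi_1$, the point $\omega$ is not an eigenvalue of $T_1$, since it is not an eigenvalue of $T$. We also have $\rho(T_1)\neq\emptyset$, so $T_1$ is a self-adjoint relation in a Pontryagin space and \Cref{O36} applies.
\end{itemize}
Hence $\gamma(\lambda)f_0\to v$ in norm. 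Then $Q(\lambda)f_0\to Q(\lambda_0)^*f_0+(\omega-\overline{\lambda_0})\gamma^+v$, and this limit equals $g_0$ when $g_0$ is defined by \eqref{O89}, giving the claimed formula $g_0=\lim_{\lambda\hat\to\omega}Q(\lambda)f_0$. Finally $(K_Q(\lambda,\lambda)f_0,f_0)=[\gamma(\lambda)f_0,\gamma(\lambda)f_0]\to[v,v]$, which gives the boundedness \eqref{O37}, so $f\equiv f_0$ is a directional function.
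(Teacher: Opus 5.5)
Your proof is correct. The converse half is the same as the paper's: split $(\lambda-\omega)(T-\lambda)^{-1}v$ along $E$, use $\omega\in\rho(T_2)$ on $(I-E)\mc K$ and \Cref{O36} on $E\mc K$, and conclude as in \Cref{O43}.

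For the first half you take a different route.
\begin{itemize}
\item The paper splits the \emph{function} as $Q=Q_{\mc N}+H$, with $Q_{\mc N}$ built from $E\gamma$ and $H$ from $(I-E)\gamma$. It verifies that $(E\mc K,T|_{E\mc K},E\gamma)$ is a minimal realisation of $Q_{\mc N}$, and notes that $f$ stays directional for $Q_{\mc N}$ with pair $(f_0;g_0-H(\omega)f_0)$. It then applies \Cref{O42} as a black box to get $v_1\in E\mc K$, and adds the explicit $v_2=(I-E)\gamma(\omega)f_0$.
\item You split the \emph{vector} $\gamma(\lambda)f(\lambda)$ instead, and rerun the weak-compactness argument of \Cref{O42} inside $\mc K$. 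Norm convergence on $(I-E)\mc K$ carries both the Gram bound and the pairings over to the Pontryagin component.
\end{itemize}
You end up with the same $v$: its $(I-E)$-part is your norm limit, i.e.\ the paper's $v_2$. It is now identified as the weak limit of $\gamma(\lambda)f(\lambda)$, just as in \Cref{O42}.

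Your route never needs $Q_{\mc N}\in\mc N(\mc G)$, nor the transfer of $f$ to $Q_{\mc N}$. That transfer is a little delicate, because the remainder $H$ as displayed in the paper is not symmetric: $H(\overline{\lambda_0})=0$ but $H(\lambda_0)=(\lambda_0-\overline{\lambda_0})\gamma^+(I-E)\gamma$. So the constant $i\,\IM\lambda_0\,\gamma^+(I-E)\gamma$ must be moved from $H$ to $Q_{\mc N}$ before \eqref{O37} passes to $Q_{\mc N}$. The paper's route, for its part, reuses \Cref{O42} verbatim and makes explicit the local structure ``Pontryagin part plus a function holomorphic at $\omega$''.

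Two small details to add:
\begin{itemize}
\item Drop $\mu=\omega$ from the total set. This is harmless by continuity of $\mu\mapsto\gamma(\mu)$.
\item Note that $E$ is bounded, being everywhere defined and self-adjoint. Hence $\{E\gamma(\mu)h\}$ is indeed total in $E\mc K$.
\end{itemize}
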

\begin{proof}
By \Cref{O82} there is a subdomain $\Lambda'$ of $\Lambda$ and a self-adjoint projection $E$ such that the orthogonal decomposition $\mc K=E\mc K [\dotplus ] (I-E)\mc K$ splits $T$ into the orthogonal sum of a self-adjoint relation in the Pontryagin space $E\mc K$ and a self-adjoint relation with no spectrum in $\Lambda'$. Then, since $E$ commutes with $(T-\lambda)^{-1}$,
\begin{align}
\label{O90}
    &Q(\lambda) =\underbrace{Q(\lambda_0)^*+(\lambda-\overline{\lambda_0})(E\gamma)^+ \big(I+(\lambda-\lambda_0)(T-\lambda)^{-1} \big)E\gamma}_{\ED Q_{\mc N}(\lambda)} \\
    \nonumber
    &+\underbrace{(\lambda-\overline{\lambda_0})((I-E)\gamma)^+ \big(I+(\lambda-\lambda_0)(T-\lambda)^{-1} \big)(I-E)\gamma}_{\ED H(\lambda)}, \,\,\, \lambda \in \rho(T) \cap \Lambda'.
\end{align}
Since
\begin{align*}
    \gamma_E(\lambda) \DE (I+(\lambda-\lambda_0)(T-\lambda)^{-1} \big)E\gamma =E\gamma (\lambda)
\end{align*}
maps into the Pontryagin space $E\mc K$, the function $Q$ belongs to $\mc N(\mc G)$ and has the realisation $(E\mc K,T|_{E \mc K},\gamma_E)$. We show that this realisation is minimal. Since $(\mc K,T,\gamma)$ is a minimal realisation of $Q$ over $\Lambda$, every $u=Eu \in E\mc K$ is the limit of some sequence $(u_n)_{n \in \bb N}$ from $\Span \big\{\gamma(\lambda) x \DS \lambda \in \rho(T) \cap \Lambda, \, x \in {\mc G} \big\}$. Since the range of the self-adjoint projection $E$ is a Pontryagin space, $E$ is bounded \cite[Proposition~5.1.1]{gheondea:2022}. Thus $Eu_n$ converges to $Eu=u$, and minimality of the realisation $(E\mc K,T|_{E \mc K},\gamma_E)$ is established.

We will now show that for each directional pair $(f_0;g_0)$ of $Q$ at $\omega$ there exists $v \in \mc K$ such that \eqref{O88}, \eqref{O89} hold. Let $f$ be a directional function corresponding to this directional pair. Since $H$ is holomorphic at $\omega$, it follows easily that $f$ is also a directional function of $Q_{\mc N}$ at $\omega$. The corresponding directional pair is $(f_0;g_0-H(\omega)f_0)$, and \Cref{O42} yields a vector $v_1 \in E\mc K$ such that
\begin{align*}
E\gamma {f_0} =\big(I+(\lambda_0-\omega)(T|_{E\mc K}-\lambda_0)^{-1} \big){v_1}, \\
{g_0}-H(\omega)f_0 = Q_{\mc N}(\lambda_0)^* {f_0} +(\omega-\overline{\lambda_0}) (E\gamma)^+ {v_1}.
\end{align*}
Since $\omega \in \rho(T|_{(I-E)\mc K})$ we can define
\begin{align*}
    v_2 &\DE \big(I+(\omega-\lambda_0)(T|_{(I-E)\mc K}-\omega)^{-1} \big)(I-E)\gamma f_0 \in (I-E)\mc K, \\
    v &\DE v_1+v_2.
\end{align*}
It follows that
\begin{align*}
    \gamma f_0 &=E\gamma f_0+(I-E)\gamma f_0=\big(I+(\lambda_0-\omega)(T|_{E\mc K}-\lambda_0)^{-1} \big){v_1} \\
    &+\big(I+(\lambda_0-\omega)(T|_{(I-E)\mc K}-\lambda_0)^{-1} \big){v_2}=\big(I+(\lambda_0-\omega)(T-\lambda_0)^{-1} \big){v}.
\end{align*}
Noting that $Q_{\mc N}(\lambda_0)^*=Q_{\mc N}(\overline{\lambda_0})=Q(\lambda_0)^*$ and $((I-E)\gamma)^+v_2=\frac{H(\omega)f_0}{\omega-\overline{\lambda_0}}$, we obtain
\begin{align*}
    g_0 &=Q_{\mc N}(\lambda_0)^* {f_0} +(\omega-\overline{\lambda_0}) (E\gamma)^+ {v_1}+H(\omega)f_0 \\
    &= Q(\lambda_0)^*+(\omega-\overline{\lambda_0})[(E\gamma)^+v_1+((I-E)\gamma)^+v_2] \\
    &=Q(\lambda_0)^*+(\omega-\overline{\lambda_0})\gamma^+v.
\end{align*}
Let us now prove the converse statement. Since $\omega \in \rho (T|_{(I-E)\mc K})$ we note that
    \begin{align*}
        \lim_{\lambda \hat{\to} \omega} (\lambda-\omega)(T-\lambda)^{-1}(I-E)\gamma f_0=0.
    \end{align*}
    Furthermore, our assumption is equivalent to $\omega$ not being an eigenvalue of $T|_{E\mc K}$. \Cref{O36} yields 
    \begin{align*}
        \lim_{\lambda \hat{\to} \omega} (\lambda-\omega)(T-\lambda)^{-1}E\gamma f_0=0.
    \end{align*}
    Together, this shows that 
     \begin{align*}
        \lim_{\lambda \hat{\to} \omega} (\lambda-\omega)(T-\lambda)^{-1}\gamma f_0=0.
    \end{align*}
    The rest of the proof is the same as that of \Cref{O43}.
\end{proof}

Before we continue, we note that for $Q \in \mc N(\mc G,\Omega)$ the well-definedness of $Q(\omega)$ is not as straightforward to show as for $Q \in \mc N(\mc G)$. Namely, the proof of \Cref{O91} uses the fact that an isometry between Pontryagin spaces with dense domain and range can be continued to a unitary operator, which is no longer true for isometries between Krein spaces. Although one could try to circumvent this problem with an argument involving \cite[Theorem~3.6]{jonas:2005}, well-definedness of $Q(\omega)$ is simply a by-product of the other statements of \Cref{O99} -- after all, the set of all directional pairs of $Q$ at $\omega$ is defined without reference to any realisation.

For the time being, however, we must take into account the possible dependence of $Q(\omega)$ on the realisation of $Q$. Hence, in the following proof (and only there) we write $Q_{(\mc K,T,\gamma)}(\omega)$ instead of $Q(\omega)$ to indicate  the choice of minimal $\pi_+$-realisation $(\mc K,T,\gamma)$ in \Cref{O87}. 

\begin{proof}[{Proof of \Cref{O99}}]
To begin with, note that (A) $\Leftrightarrow$ (A') is true by definition, and (A) $\Rightarrow$ (C) is precisely the first statement of \Cref{O84}. The proof of (B) $\Leftrightarrow$ (C) carries over verbatim from the proof of \Cref{O33}, as it consists only of algebraic manipulations. Hence we only need to show the implication (B) $\Rightarrow$ (A) and the uniqueness statement. We will use the following statements, which are all generalisations of results in \Cref{S3} that are proved in the same way as the original statements:
\begin{Enumerate}
    \item $\omega$ is an eigenvalue of $T$ if and only if there exists $g_0 \in \mc G \setminus \{0\}$ such that $(0;g_0) \in Q_{(\mc K,T,\gamma)}(\omega)$. 
    \item Assume that $f$ is a directional function of $Q$ at $\omega$ with directional pair $(f_0;g_0)$. Then it is also a directional function of $Q+\Theta$ for any self-adjoint $\Theta \in \mc B(\mc G)$, with directional pair $(f_0;g_0+\Theta f_0)$. If $Q$ is regular, then $\hat f(\lambda) \DE Q(\lambda)f(\lambda)$ is a directional function of $\wh Q$ at $\omega$ with directional pair $({g_0};-{f_0})$.
    \item Let $\Theta \in \mc B(\mc G)$ be self-adjoint. Then $(\mc K,T,\gamma)$ is also a minimal $\pi_+$-realisation of $Q+\Theta$ over $\Lambda$, and 
    \begin{align*}
        (Q+\Theta)_{(\mc K,T,\gamma)}(\omega)=Q_{(\mc K,T,\gamma)}(\omega)+\Theta.
    \end{align*}
    If $Q$ is regular then
    \begin{align*}
        \wh Q_{(\mc K,\wh T,\hat{\gamma})}(\omega)=-Q_{(\mc K,T,\gamma)}(\omega)^{-1}.
    \end{align*}
    \item There exists a self-adjoint $\Theta \in \mc B(\mc G)$ such that $0 \in \rho(Q(\lambda_0)+\Theta)$ and $\omega$ is not an eigenvalue of $\wh{T_\Theta}$, where
    \begin{align*}
        (\wh{T_\Theta}-\lambda_0)^{-1} \DE (T-\lambda_0)^{-1}+\gamma(\lambda_0)\widehat{\left( {Q+\Theta}\right)}(\lambda_0)\gamma(\overline{\lambda_0})^+.
    \end{align*}
\end{Enumerate}
With these auxiliary statements at hand we will show, step by step, that for any $(f_0;g_0) \in Q_{(\mc K,T,\gamma)}(\omega)$ there exists a directional function of $Q$ at $\alpha$ such that $(f_0;g_0)$ is the corresponding directional pair. The steps are as follows:
\begin{itemize}
    \item Choose $\Theta \in \mc B(\mc G)$ having the properties asserted in (iv).
    \item Transform $(f_0;g_0) \in Q_{(\mc K,T,\gamma)}(\omega)$ into $(-g_0-\Theta f_0;f_0) \in Q_{(\mc K,\wh{T_\Theta},\hat{\gamma}_{\Theta})}(\omega)$, using (iii). Here $\hat{\gamma}_{\Theta} \DE \gamma \wh{Q+\Theta}(\lambda_0)$.
    \item Due to (B) $\Leftrightarrow$ (C) there exists $v\in\mc K$ with
\begin{align*}
\hat{\gamma}_{\Theta} (-{g_0}-\Theta {f_0}) =\big(I+(\lambda_0-\omega)(\wh{T_\Theta}-\lambda_0)^{-1} \big){v}, \\
{f_0} = \wh{(Q+\Theta )}(\lambda_0)^* (-{g_0}-\Theta{f_0}) +(\omega-\overline{\lambda_0}) (\hat{\gamma}_{\Theta})^+ {v}.
\end{align*}
    \item By the converse statement in \Cref{O84}, the {constant} function $f_\Theta (\lambda) \DE-{g_0}-\Theta {f_0}$ is a directional function of $\wh{Q+\Theta}$ at $\omega$, with directional pair $(-g_0-\Theta f_0;f_0)$.
    \item Transform back using (ii): the directional function $f_\Theta$ of $\wh{Q+\Theta}$ transforms into a directional function of $Q$ with directional pair $(f_0;g_0)$.
\end{itemize}
These steps establish (B) $\Rightarrow$ (A). The uniqueness statement is shown in the same way as the uniqueness statement in \Cref{O33}.
\end{proof}

Also the characterisation of eigenvalues given in \Cref{O55} has a local analogue. The setting is as in \Cref{OS22}, except that $S$ now acts in a Krein space $\mc K$, and the self-adjoint extension $A$ (acting in $\mc K$ as well) is assumed to be of type $\pi_+$ over a domain $\Omega$ satisfying \eqref{O50}.

\begin{corollary}
\label{O5}
    Let $\wt A$ be a self-adjoint extension of $S$ in a Krein space $\wt{\mc K} \supseteq \mc K$. Assume that $\wt A$ is of type $\pi_+$ over $\Omega$, and that there exists $\tau \in \mc N(\mc G,\Omega)$ such that
    \begin{align}
P_{\mc K}(\wt A-\lambda)^{-1}|_{\mc K}=(A-\lambda)^{-1}-\gamma (\lambda)(m(\lambda)+\tau(\lambda))^{-1}\gamma(\overline{\lambda})^+
\end{align}
holds for all $\lambda \in \rho(A) \cap \mf h((m+\tau)^{-1})$. Then a point $\alpha \in \Omega$ is an eigenvalue of $\wt A$ if and only if there exist vectors $x_0,y_0\in\mathcal G$ with  $(x_0;y_0)\neq(0;0)$ such that 
\begin{equation}\label{eq:zeroes-main-krein}
   (x_0;y_0)\in m(\alpha) \quad \text{ and } \quad(x_0;-y_0) \in \tau(\alpha). 
\end{equation}
Furthermore, there  exists a linear bijection between $\ker (\wt A-\alpha)$ and the space of all $(x_0;y_0) \in \mc G \oplus \mc G$ satisfying \eqref{eq:zeroes-main-krein}.
\end{corollary}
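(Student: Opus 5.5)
The plan is to mimic the proof of \Cref{O55}, replacing each global ingredient by its local counterpart from this section. As there, we may first shift $m\mapsto m+\sigma I_{\mc G}$, $\tau\mapsto\tau-\sigma I_{\mc G}$ with $\sigma$ real and large. This keeps $m+\tau$ and Krein's formula unchanged, and by statement (iii) in the proof of \Cref{O99} it leaves criterion \eqref{eq:zeroes-main-krein} unaffected. Hence we may assume that $\tau$ is regular, and define $\wt M$ by \eqref{O2}. Note that $m$ is a local $Q$-function: since $A$ is of type $\pi_+$ over $\Omega$, splitting off the part of $A$ with no spectrum near $\Omega$ as in \eqref{O90} shows $m\in\mc N(\mc G,\Omega)$, with minimal $\pi_+$-realisation $(\mc K,A,\gamma)$ by simplicity of $S$. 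Consequently $\wt M\in\mc N(\mc G\oplus\mc G,\Omega)$, since its inverse $-\wh{\wt M}$ is, by \eqref{O78}, a block-diagonal function in the local class plus a self-adjoint constant, and inversion preserves the local class \cite[Proposition~2.6]{behrndt.luger:2007}.

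The second step is to identify $\wt A$ as the representing relation of a minimal $\pi_+$-realisation of $\wt M$ over a subdomain $\Lambda\ni\alpha$. I would carry over \Cref{O57}, i.e.\ \cite[Proposition~4.6]{behrndt.luger:2007}, whose construction is purely algebraic: one builds the realisation of $\wh{\wt M}$ on $\mc K\times\Upsilon$ from realisations of $m$ and $\wh\tau$, with $\Upsilon$ carrying a minimal $\pi_+$-realisation of $\tau$, and then inverts via \eqref{O92}. This produces a relation $\wh{\bb A}$ that is a $\mc K$-minimal extension of $S$ satisfying Krein's formula. The resulting relation is of type $\pi_+$, because it is a finite-rank-resolvent-type perturbation of the direct sum of two type $\pi_+$ relations.

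The main obstacle is then unitary equivalence of $\wt A$ and $\wh{\bb A}$. In the Pontryagin setting we used that densely defined isometries with dense range extend to unitaries, and this fails in Krein spaces. I would avoid needing it globally. Both relations are of type $\pi_+$ over $\Omega$, so near $\alpha$ each splits as a Pontryagin-space part plus a part with no spectrum in $\Lambda'$. Eigenvectors at $\alpha$ live in the Pontryagin parts, and the eigenvalue criterion of \Cref{le:eigenvalue} (statement (i) in the proof of \Cref{O99}) only concerns the realisation chosen. So it suffices to know that $\wt A$ itself yields a minimal $\pi_+$-realisation of $\wt M$. I would try to verify this directly. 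Define $\tilde{\bm\gamma}(\lambda)$ from $\gamma(\lambda)$ and $P_{\mc K}$-compressions as in \cite{behrndt.luger:2007}, and check the representation of $\wt M$ by a resolvent computation using Krein's formula. Minimality then reduces to $\mc K$-minimality of $\wt A$, restricted to $\lambda\in\Lambda$; analytic continuation of the generating vectors should handle the restriction. Alternatively, one could invoke \cite[Theorem~3.6]{jonas:2005} for uniqueness of minimal $\pi_+$-realisations near $\Lambda$.

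Finally, \Cref{O34} transfers verbatim, since its proof only uses \Cref{O99} in place of \Cref{O33} together with statement (iii). It shows that $\bigl(\binom00;\binom{x_0}{y_0}\bigr)\in\wt M(\alpha)$ holds if and only if \eqref{eq:zeroes-main-krein} holds. The bijection $\Psi$ is defined exactly as at the end of the proof of \Cref{O55}. Surjectivity onto the relevant set comes from statement (i), and injectivity from the uniqueness part of \Cref{O99}.
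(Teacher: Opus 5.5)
The paper states this corollary without proof. The intended argument is your transcription of the proof of \Cref{O55}, with \Cref{O99} in place of \Cref{O33}. You correctly identify the one step that does not carry over: the unitary equivalence of $\wt A$ and $\wh{\bb A}$, which relies on extending densely defined isometries, and that fails in Krein spaces (cf.\ \Cref{S62}).

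Your remedy is the right one: show that $\wt A$ itself is the relation of a minimal $\pi_+$-realisation of $\wt M$. This even makes \Cref{O57} superfluous. However, it is the only non-routine step of the whole proof, and you only announce it. As phrased it would also fail. Vectors built from $\gamma(\lambda)$ and $P_{\mc K}$-compressions lie in $\mc K$ and cannot span $\wt{\mc K}$, so the defect map must use the uncompressed resolvent of $\wt A$.

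A choice that works is the following. For $\lambda\in\rho(A)\cap\rho(\wt A)$, the operators $(\wt A-\lambda)^{-1}|_{\mc K}$ and $(A-\lambda)^{-1}$ coincide on $\ran(S-\lambda)=\ker\gamma(\overline{\lambda})^+$, and $\gamma(\overline{\lambda})^+$ maps onto $\mc G$. Hence there is a bounded $\phi(\lambda)\colon\mc G\to\wt{\mc K}$ with
\[
\phi(\lambda)\gamma(\overline{\lambda})^+=(\wt A-\lambda)^{-1}\big|_{\mc K}-(A-\lambda)^{-1},\qquad
\bm{\gamma}(\lambda)\binom{x}{y}\DE\phi(\lambda)\big(x+m(\lambda)y\big)+\gamma(\lambda)y .
\]
\begin{itemize}
\item The resolvent identity and $\gamma(\overline{\lambda})^+\gamma(\mu)=\frac{m(\mu)-m(\lambda)}{\mu-\lambda}$ give $\bm{\gamma}(\lambda)=\big(I+(\lambda-\mu)(\wt A-\lambda)^{-1}\big)\bm{\gamma}(\mu)$.
\item All inner products of these vectors reduce to compressed resolvents. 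Krein's formula therefore yields $\gamma(\mu)^+\phi(\lambda)=-K_m(\lambda,\mu)\big(m(\lambda)+\tau(\lambda)\big)^{-1}$ and $\phi(\mu)^+\phi(\lambda)=K_{-(m+\tau)^{-1}}(\lambda,\mu)$. From these, $\bm{\gamma}(\mu)^+\bm{\gamma}(\lambda)=K_{\wt M}(\lambda,\mu)$ follows block by block.
\item For minimality, note $\bm{\gamma}(\lambda)\binom{-m(\lambda)y}{y}=\gamma(\lambda)y$ and $(\wt A-\lambda)^{-1}u=\phi(\lambda)\gamma(\overline{\lambda})^+u+(A-\lambda)^{-1}u$ for $u\in\mc K$. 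So the closed span of the vectors $\bm{\gamma}(\lambda)\xi$, $\lambda\in\Lambda$, contains $\mc K$ (by simplicity of $S$) and $(\wt A-\lambda)^{-1}\mc K$.
\end{itemize}

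That last step, and hence your whole argument, needs $\wt A$ to be $\mc K$-minimal, with $\lambda$ restricted to $\Lambda$. This hypothesis is missing from the printed statement, and it cannot be dropped. Adding to $\wt A$ the summand $\{(c;\alpha c)\DS c\in\bb C\}$ in an orthogonal positive one-dimensional space preserves the type $\pi_+$ property and Krein's formula, but creates the eigenvalue $\alpha$. You should state the hypothesis explicitly.

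Once $\wt A$ realises $\wt M$, the rest goes through as you say:
\begin{itemize}
\item statement (i) in the proof of \Cref{O99}, applied to this realisation;
\item realisation independence of $\wt M(\alpha)$ from \Cref{O99};
\item the local version of \Cref{O34};
\item the bijection $\Psi(\mathbf v)=(\alpha-\overline{\lambda_0})\bm{\gamma}(\lambda_0)^+\mathbf v$.
\end{itemize}
Your remark that $\wh{\bb A}$ is a finite-rank perturbation is false for infinite defect, but on this route $\wh{\bb A}$ is not needed.
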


\subsection{A special case: Local definiteness}
\label{OS63}

Our results from \Cref{sec-Nevanlinna} on functions in $\mc N_0(\mc G)$ continue to hold for functions that behave only locally like functions from $\mc N_0(\mc G)$ in the sense of the following definition.

\begin{definition}
\label{O95}
    Let $Q \in \mc N(\mc G,\Omega)$ and  the point $\omega \in \Omega \cap \bb R$. We then say that $Q$ is \emph{locally $\mc N_0$ at $\omega$} if there is a neighborhood $\mc U_\omega$ of $\omega$ where $Q$ can be decomposed as 
    \begin{align}
    \label{O47}
        Q(\lambda)=Q_0(\lambda)+H(\lambda), \qquad \lambda \in \mc U_\omega \cap \mf h(Q),
    \end{align}
    where $Q_0 \in \mc N_0(\mc G)$ and $H$ is holomorphic on $\mc U_\omega \cap \Omega$ with $H(\overline{\lambda})=H(\lambda)^*$.
\end{definition}

We point out that $Q \in \mc N(\mc G,\Omega)$ is locally $\mc N_0$ at $\omega$ if and only if $\omega$ is not a generalised pole of nonpositive type of $Q$, cf. \cite[Proposition~3.3]{daho.langer:1985}.

\begin{lemma}
\label{O98}
    Let $Q \in \mc N(\mc G,\Omega)$ be locally $\mc N_0$ at $\omega \in \Omega \cap \bb R$. Choose a decomposition of $Q$ as $Q_0+H$. Then
    \begin{align*}
        Q(\omega)=Q_0(\omega)+H(\omega).
    \end{align*}
Here the relations $Q(\omega)$ and $Q_0(\omega)$ are understood as generalised values, while $H(\omega) \in \mc B(\mc G)$ is just $H$ evaluated at $\omega$.
\end{lemma}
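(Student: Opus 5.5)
The natural approach is to work with directional pairs rather than with realisations, since by \Cref{O99} the relation $Q(\omega)$ coincides with the set of directional pairs of $Q$ at $\omega$. For $Q_0\in\mc N_0(\mc G)$ the same holds by \Cref{O33}. So it suffices to show that $(f_0;g_0)$ is a directional pair of $Q$ at $\omega$ if and only if $(f_0;g_0-H(\omega)f_0)$ is a directional pair of $Q_0$ at $\omega$.

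The key observation is that the two parts of \Cref{O31} transform well under adding a holomorphic $H$. Let $f$ be an analytic function near $\omega$ in $\bb C^+$ with $f(\lambda)\to f_0$ as $\lambda\hat\to\omega$. Since $H$ is holomorphic at $\omega$, $H(\lambda)f(\lambda)\to H(\omega)f_0$. Hence $Q(\lambda)f(\lambda)$ converges if and only if $Q_0(\lambda)f(\lambda)$ converges, and the limits differ by $H(\omega)f_0$.

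For the kernel condition \eqref{O37} we write
\[
K_Q(\lambda,\lambda)=K_{Q_0}(\lambda,\lambda)+\frac{H(\lambda)-H(\lambda)^*}{\lambda-\overline\lambda}.
\]
Because $H(\overline\lambda)=H(\lambda)^*$, we have $H(\lambda)^*=H(\overline\lambda)$, so the last term is the difference quotient $\frac{H(\lambda)-H(\overline\lambda)}{\lambda-\overline\lambda}$. This stays bounded in norm as $\lambda\to\omega$: on a small disc around $\omega$ contained in the domain of holomorphy, it is bounded by $\sup\|H'\|$ over the segment joining $\overline\lambda$ and $\lambda$. Since $f$ is bounded near $\omega$ (it has a limit), boundedness of $(K_Q(\lambda,\lambda)f,f)$ is equivalent to boundedness of $(K_{Q_0}(\lambda,\lambda)f,f)$. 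Consequently $f$ is a directional function of $Q$ if and only if it is one of $Q_0$, with the stated shift in the second component.

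Combining the two steps gives $Q(\omega)=\{(f_0;g_0+H(\omega)f_0)\DS (f_0;g_0)\in Q_0(\omega)\}=Q_0(\omega)+H(\omega)$.

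One technicality needs checking: directional functions are only required to be defined on $\mc U_\omega\cap\mf h(Q)\cap\bb C^+$, while $\mf h(Q)$ and $\mf h(Q_0)$ may differ away from $\omega$. Shrinking the neighbourhood resolves this, since nonreal poles are isolated and nontangential limits only concern small angular regions. I expect the only real subtlety to be justifying that \Cref{O99} applies to $Q\in\mc N(\mc G,\Omega)$, which has already been established, so I anticipate no serious obstacle.
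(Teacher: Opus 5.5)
Your proposal is correct and follows essentially the same route as the paper. Both arguments use the directional-pair characterisation (\Cref{O99} for $Q$, and \Cref{O33} or \Cref{O99} for $Q_0$) to reduce the claim to the fact that $f$ is a directional function of $Q$ exactly when it is one of $Q_0$, with the second component shifted by $H(\omega)f_0$. The paper leaves this at ``since $H$ is holomorphic at $\omega$'', whereas you additionally spell out the bounded difference quotient $\frac{H(\lambda)-H(\overline\lambda)}{\lambda-\overline\lambda}$ in the kernel condition.
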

\begin{proof}
    Due to the characterisation in \Cref{O99}, we have $(f_0;g_0) \in Q(\omega)$ if and only if there exists a directional function $f$ of $Q$ at $\omega$ with directional pair $(f_0;g_0)$. Since $H$ is holomorphic at $\omega$, this is equivalent to $f$ being a directional function of $Q_0$ at $\omega$, with directional pair $(f_0;g_0-H(\omega)f_0)$. Again due to \Cref{O99}, this means that $(f_0;g_0-H(\omega)f_0) \in Q_0(\omega)$ or, alternatively, $(f_0;g_0) \in Q_0(\omega)+H(\omega)$.
\end{proof}

The decomposition $Q=Q_0+H$ of a function $Q$ that is locally $\mc N_0$ at $\omega$ is not unique. However, the measure $\Sigma$ in the integral representation \eqref{O96} of $Q_0$ is uniquely determined on compact subsets of $\mc U_\omega \cap \bb R$ by means of the Stieltjes inversion formula.

Using \Cref{O98} it becomes clear that the results of \Cref{sec-Nevanlinna} generalise to all functions in $\mc N(\mc G,\Omega)$ that are locally $\mc N_0$ at $\omega$. We state, in particular, the analogues of \Cref{O35} and \Cref{O11}. Proofs are straightforward and therefore omitted.
\begin{theorem}
    Let $Q \in \mc N(\mc G,\Omega)$ be locally $\mc N_0$ at $\omega \in \Omega \cap \bb R$. Then 
    \begin{align*}
        Q(\omega)=Q_{\emph{op}}(\omega) \dotplus (\{0\} \times \ran \Sigma (\{\omega \})^{\frac 12}).
    \end{align*}
    where
    \begin{align*}
        Q_{\emph{op}}(\omega) \DE \bigg\{ \big(f_0;\lim_{\lambda \hat{\to} \omega} Q(\lambda)f_0 \big) \DS \int_{\bb R} \frac{\DD (\Sigma(t)f_0,f_0)}{(t-\omega)^2}<\infty \bigg\}.
    \end{align*}
\end{theorem}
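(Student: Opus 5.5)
The plan is to reduce to the global definite result \Cref{O35} by means of \Cref{O98}. Fix a decomposition $Q=Q_0+H$ near $\omega$ as in \Cref{O95}, with $Q_0\in\mc N_0(\mc G)$ having integral representation \eqref{O96} with measure $\Sigma$, and $H$ holomorphic on $\mc U_\omega\cap\Omega$ with $H(\overline\lambda)=H(\lambda)^*$. Then $H(\omega)$ is a bounded self-adjoint operator. By \Cref{O98} we have $Q(\omega)=Q_0(\omega)+H(\omega)$, with the sum understood as in \Cref{O75} (i).

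Next we apply \Cref{O35} to $Q_0$:
\[
Q_0(\omega)=(Q_0)_{\rm op}(\omega)\dotplus(\{0\}\times\ran\Sigma(\{\omega\})^{\frac12}).
\]
Adding $H(\omega)$ sends a pair $(f_0;g_0)$ to $(f_0;g_0+H(\omega)f_0)$. Elements of the form $(0;g)$ are unchanged, so the multivalued part stays $\{0\}\times\ran\Sigma(\{\omega\})^{\frac12}$. For $f_0$ with $\int(t-\omega)^{-2}\,\DD(\Sigma(t)f_0,f_0)<\infty$ we get $\lim_{\lambda\hat\to\omega}Q_0(\lambda)f_0+H(\omega)f_0=\lim_{\lambda\hat\to\omega}Q(\lambda)f_0$, because $H$ is continuous at $\omega$. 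Hence $(Q_0)_{\rm op}(\omega)+H(\omega)=Q_{\rm op}(\omega)$. The sum stays direct, since the first summand is the graph of an operator.

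The remaining point is that the statement is phrased in terms of \emph{the} measure $\Sigma$, while the decomposition $Q=Q_0+H$ is not unique. Two choices of $Q_0$ differ on $\mc U_\omega$ by a function holomorphic near $\omega$ across the real axis. By Stieltjes inversion their measures therefore agree on a compact neighbourhood of $\omega$ (this is the remark after \Cref{O98}). Consequently $\Sigma(\{\omega\})$ is independent of the choice. The finiteness of $\int(t-\omega)^{-2}\,\DD(\Sigma(t)f_0,f_0)$ also does not depend on the choice: away from a neighbourhood of $\omega$ the integrand is bounded by a multiple of $(1+t^2)^{-1}$, and $\int(1+t^2)^{-1}\,\DD\Sigma$ is finite. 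I expect this well-definedness check to be the only mildly delicate step. The rest is bookkeeping on top of \Cref{O98} and \Cref{O35}.
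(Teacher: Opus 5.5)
Your proposal is correct and follows the route the paper intends. The paper omits this proof as straightforward after noting that \Cref{O98} transfers the results of \Cref{sec-Nevanlinna}, and your argument (apply \Cref{O98}, then \Cref{O35}, then the shear $(f_0;g_0)\mapsto(f_0;g_0+H(\omega)f_0)$), together with your check via Stieltjes inversion that $\Sigma(\{\omega\})$ and the finiteness of the integral do not depend on the chosen decomposition, fills in exactly those details.
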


\begin{proposition}
\label{O14}
     Let $m,\tau \in \mc N(\mc G,\Omega)$ both be locally $\mc N_0$ at $\alpha \in \Omega \cap \bb R$. Then
     \begin{align*}
         (m+\tau)(\alpha)=m(\alpha)+\tau(\alpha).
     \end{align*}
\end{proposition}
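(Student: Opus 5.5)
We reduce \Cref{O14} to its global counterpart \Cref{O11} by means of \Cref{O98}. Since $m$ and $\tau$ are locally $\mc N_0$ at $\alpha$, we fix neighbourhoods and decompositions
\begin{align*}
m=m_0+H_m, \qquad \tau=\tau_0+H_\tau,
\end{align*}
with $m_0,\tau_0 \in \mc N_0(\mc G)$ and $H_m,H_\tau$ holomorphic near $\alpha$ and satisfying $H(\overline\lambda)=H(\lambda)^*$. We then intersect the two neighbourhoods, so that both decompositions hold on a common neighbourhood $\mc U_\alpha$.

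\textbf{Step 1: a decomposition of the sum.} The sum decomposes as $m+\tau=(m_0+\tau_0)+(H_m+H_\tau)$ on $\mc U_\alpha\cap\mf h(m+\tau)$. Here $m_0+\tau_0\in\mc N_0(\mc G)$, because the kernel of a sum is the sum of two nonnegative kernels. Moreover, $H_m+H_\tau$ is holomorphic near $\alpha$ and symmetric. Finally, $m+\tau\in\mc N(\mc G,\Omega)$, since the local decompositions from \Cref{O94} add up. Hence $m+\tau$ is itself locally $\mc N_0$ at $\alpha$.

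\textbf{Step 2: peel off the holomorphic parts.} We apply \Cref{O98} three times:
\begin{align*}
(m+\tau)(\alpha)&=(m_0+\tau_0)(\alpha)+H_m(\alpha)+H_\tau(\alpha),\\
m(\alpha)&=m_0(\alpha)+H_m(\alpha),\\
\tau(\alpha)&=\tau_0(\alpha)+H_\tau(\alpha).
\end{align*}

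\textbf{Step 3: combine using the global result.} By \Cref{O11}, $(m_0+\tau_0)(\alpha)=m_0(\alpha)+\tau_0(\alpha)$. It remains to rearrange the bounded operators through the relation sums. This is a routine check from the definition of the operator sum of relations, $R_1+R_2=\{(f;g_1+g_2)\DS (f;g_i)\in R_i\}$. Adding a bounded everywhere-defined operator $B$ to a relation only shifts the second components by $Bf$. Consequently
\begin{align*}
(R_1+B_1)+(R_2+B_2)=(R_1+R_2)+(B_1+B_2),
\end{align*}
with the domain $\dom R_1\cap\dom R_2$ unchanged on both sides. Applying this with $R_1=m_0(\alpha)$, $R_2=\tau_0(\alpha)$, $B_1=H_m(\alpha)$ and $B_2=H_\tau(\alpha)$ gives the claim.

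\textbf{Main obstacle.} The only substantive point is Step 1. We must make sure that \Cref{O98} applies to $m+\tau$, which requires $m+\tau\in\mc N(\mc G,\Omega)$ and a valid local $\mc N_0$ decomposition. The latter is immediate once the common neighbourhood has been chosen. The real content is carried by \Cref{O11}, namely the Fillmore–Williams range identity, and by the realisation-independence of generalised values from \Cref{O99}, which underlies \Cref{O98}.
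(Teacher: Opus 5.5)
Your proposal is correct and follows essentially the route the paper intends. The paper omits the proof as straightforward, noting only that via \Cref{O98} the results for $\mc N_0(\mc G)$ carry over. Your argument fills this in: you check that $m+\tau$ is again locally $\mc N_0$ at $\alpha$, use \Cref{O98} to peel off the holomorphic parts, apply \Cref{O11} to $m_0+\tau_0$, and recombine the bounded shifts.
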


\begin{remark}
    Just like the general eigenvalue criterion in \Cref{O55} carries over to the local case (cf. \Cref{O5}), also the simplified criterion given in \Cref{O52} for the definite situation stays valid locally. That is, if $\wt A$ and $m,\tau \in \mc N_0(\mc G,\Omega)$ are related through Krein's formula then a point $\alpha \in \Omega$ is an eigenvalue of $\wt A$ if and only if it is either a generalised zero of $m+\tau$, or a generalised pole of both $m$ and $\tau$ with a common pole vector. 
\end{remark}

\subsection*{Acknowledgements.}
We would like to thank Seppo Hassi for sharing the unpublished manuscript \cite{hassi:spexit} with us, and his kind communication on the topic that helped us to identify a problem in \Cref{sec-Nevanlinna}. \newline


{\footnotesize

\begin{flushleft}
	A.~Luger \\
	Department of Mathematics\\
	Stockholms universitet \\
    106 91 Stockholm \\
	SWEDEN \\
	email: luger@math.su.se \\[5mm]
\end{flushleft}

\begin{flushleft}
	J.~Reiffenstein \\
	Department of Mathematics\\
	Stockholms universitet \\
    106 91 Stockholm \\
	SWEDEN \\
	email: jakob.reiffenstein@math.su.se \\[5mm]
\end{flushleft}

}

\end{document}